\numberwithin{equation}{section}
\newcommand{\beq}{\begin{equation}}
\newcommand{\eeq}{\end{equation}}
\newcommand{\ben}{\begin{eqnarray}}
\newcommand{\een}{\end{eqnarray}}
\newcommand{\beno}{\begin{eqnarray*}}
\newcommand{\eeno}{\end{eqnarray*}}
\newtheorem{theorem}{Theorem}[section]
\newtheorem{definition}[theorem]{Definition}
\newtheorem{lemma}[theorem]{Lemma}
\newtheorem{proposition}[theorem]{Proposition}
\newtheorem{corollary}{Corollary}[section]
\newtheorem{remark}[theorem]{Remark}
\begin{document}

\title{Expanding solutions near unstable Lane-Emden stars \footnote{To appear on Comm. Math. Phys.}}

\author{Ming Cheng}
\address{School of Mathematics, Jilin University, Changchun, 130012, China.}
\email{mcheng314@jlu.edu.cn}
\author{Xing Cheng}
\address{School of Mathematics, Hohai University, Nanjing 210098, Jiangsu, China}
\email{chengx@hhu.edu.cn}
\author{Zhiwu Lin}
\address{School of Mathematical Sciences, Fudan University, Shanghai, 200433, China}
\email{zwlin@fudan.edu.cn}

\date{}
\maketitle

\begin{abstract}

We consider the compressible Euler-Poisson equations for polytropes $P(\rho)=K\rho^{\gamma}$ with $\gamma\in \left(\frac{6}{5},\frac{4}{3} \right]$ and the
white dwarf stars. For $\gamma=\frac{4}{3},$ we establish the existence of a global weak solution for the spherically symmetric initial data with mass less than the  mass of the Lane-Emden stars (i.e. non-rotating polytropes). For $\gamma\in \left(\frac{6}{5},\frac{4}{3} \right)$, we show the existence of global weak
solution for spherical symmetric initial data in an invariant set containing a neighborhood of Lane-Emden stars. Moreover, the support of these solution
expands to infinity. As a corollary, this proves the strong instability of the Lane-Emden stars for $\gamma\in \left(  \frac{6}{5},\frac{4}{3}\right]  $.
For $\gamma\in \left(\frac{6}{5},\frac{4}{3} \right),$ our results provide the first example of expanding solutions near the Lane-Emden stars. For white dwarf stars,
we prove that the solution cannot collapse if the mass of initial data is less than the Chandrasekhar limit mass,
which is the supremum of the mass of the non-rotating white dwarf stars. Our proof strongly uses the variational characterization of the  Lane-Emden stars.
First, we relate the best constant of a Hardy-Littlewood type inequality with the mass of the Lane-Emden stars with $\gamma=\frac{4}{3}$,
which is further shown to equal the Chandrasekhar limit mass. For $\gamma\in\left(  \frac{6}{5},\frac{4}{3}\right)  $,
we show that the Lane-Emden stars are minimizers of an energy-mass functional subject to a Pohozaev type constraint.
This is crucial in the construction of the invariant set of expanding solutions.

\end{abstract}

{\small Keywords:} {\small Euler-Poisson equations, Variational method, Chandrasekhar limit mass, Lane-Emden star.}\\

{\small Mathematics Subject Classification (2020): 35Q85, 35R35, 35A15, 85A05}\\

\section{Introduction}

In this paper, we consider the  Newtonian self-gravitating gaseous stars modeled by the following compressible Euler-Poisson equations:
\begin{align}
&\partial_t \rho+\text{div} \left(\rho \textbf{u} \right)=0, \label{1.1}\\
&\partial_t  \left(\rho \textbf{u} \right)+\text{div} \left(\rho \textbf{u}\otimes \textbf{u} \right)+ \nabla P (\rho)
=-\rho \nabla V,\\
&\Delta V= 4\pi \rho,\label{1.3}\\
&\lim\limits_{\left\vert x\right\vert \rightarrow\infty}V(t,x)  = 0, \label{1.4}
\end{align}
where $(t,x)\in[0,+\infty)\times\mathbb{R}^3$, $\rho(t,x )\geq 0$ represents the density of the gas with the support  $\Omega(t)\subset \mathbb{R}^3$, $\textbf{u}( t, x )\in\mathbb{R}^3$ is the velocity vector, and $V(t, x )\in\mathbb{R}$ is the self-gravitational potential.
The enthalpy function $\Phi( \rho)$ is defined to be
\begin{align*}
\Phi(0) = \Phi'(0) = 0, \Phi''( \rho) = \frac{P'( \rho)}\rho.
\end{align*}
In particular, for $P(\rho) = K \rho^\gamma$ with $\gamma > 1 $, we have
$\Phi (\rho) = \frac1{\gamma  - 1} K \rho^\gamma.$

There are two important conserved quantities of \eqref{1.1}-\eqref{1.4}, which are the mass and the energy. The mass is defined to be
\begin{align*}
M(\rho ): = \int_{\mathbb{R}^3}\rho \, \mathrm{d}x,
\end{align*}
and the energy is
\begin{align*}
E(\rho, \mathbf{u} ) : =&\frac{1}{2}\int_{\mathbb{R}^3}\rho \left| \mathbf{u}   \right|^2 \, \mathrm{d}x
+\int_{\mathbb{R}^3} \Phi( \rho) \, \mathrm{d} x
-\frac{1}{2}\iint_{\mathbb{R}^3\times\mathbb{R}^3}\frac{\rho(x)\rho(y)}{|x-y|} \, \mathrm{d}x \mathrm{d}y.
\end{align*}
We first consider the polytropic case:
\begin{align}\label{eq1.5v34}
P(\rho)=K\rho^\gamma,
\end{align}
where $\gamma>\frac{6}{5}$ is the adiabatic exponent and $K$ is a positive constant.

The local well-posedness of the problem \eqref{1.1}-\eqref{eq1.5v34} with the physical vacuum condition  had been studied by X. Gu and Z. Lei \cite{GL},
T. Luo, Z. Xin and H. Zeng \cite{LXZ1}. We also refer to \cite{CS2012,JM} for related works on compressible Euler equations.
However, the existence of the global strong solutions to \eqref{1.1}-\eqref{eq1.5v34} is largely open. In the work of G.-Q. Chen, L. He, Y. Wang and D. Yuan \cite{GLD}, the global existence of the weak solutions to  \eqref{1.1}-\eqref{eq1.5v34} was proved for the spherically symmetric initial data with the mass less than a constant depending on the initial energy when $\gamma \in \big( \frac65, \frac43 \big]$ and with no restriction when $\gamma > \frac43$.
We also refer to the recent work of G.-Q. Chen, F. Huang, T. Li, W. Wang and Y. Wang \cite{CHW} on related results for more general equation of states including white dwarf stars.

Stellar dynamics of gaseous stars is a classical topic in astrophysics. Lane-Emden stars refer to non-rotating solutions of Euler-Poisson system with polytropic equation of states.
They are defined mathematically to be non-rotating spherically symmetric steady solutions  $(\rho_\mu(|x|),0)$  of \eqref{1.1}-\eqref{eq1.5v34} with the center density $\rho_\mu(0)=\mu$ $\in\left(  0,\infty\right)  $ and $0<R_\mu<\infty$ is the radius of $\text{supp} \, \rho_\mu$.
Therefore, $\rho_\mu$ satisfies
\begin{align}\label{1.11}
\begin{cases}
\nabla P(\rho_\mu)+\rho_\mu\nabla V_\mu=0,\ |x|<R_\mu,  \\
\Delta V_\mu=4\pi\rho_\mu.
\end{cases}
\end{align}
 The linear stability/instability of Lane-Emden stars had been studied mathematically by S.-S. Lin \cite{L}, who showed the linear instability for $\gamma \in  \left( \frac65, \frac43 \right)$ and linear stability for $\gamma \in \big[ \frac43, 2 \big)$.
The nonlinear stability/instability of non-rotating stars stars have been intensively studied in the last two decades.
J. Jang \cite{Jang08,Jang14} proved nonlinear instability of Lane-Emden stars for $ \gamma \in \left[ \frac65 ,  \frac43 \right)$. 

G. Rein \cite{R} proved the nonlinear conditional stability of Lane-Emden stars for $\gamma > \frac43$.
T. Luo and J. Smoller \cite{LS} proved the nonlinear conditional stability of non-rotating white dwarf stars when the mass is less than a critical mass.
In \cite{ZC2020}, Z. Lin and C. Zeng showed the turning point principle(TPP) for general equation of states, that is, the stability of the non-rotating stars is entirely determined by the mass-radius curve parameterized by the center density.
We also refer to \cite{CLW,HL,HLR,LW} for related works on TPP for other models.
In \cite{LWZ}, Z. Lin, Y. Wang and H. Zhu proved the nonlinear stability of the non-rotating stars from TPP for general equation of states. In particular, for spherically symmetric perturbations unconditional nonlinear stability is obtained in the sense that the existence of global weak solutions (the uniqueness is not known) is proved near the non-rotating stars.

There are some interesting works on the construction of collapsing or expanding solutions for Euler-Poisson systems. For the polytrope with $\gamma=\frac{4}{3}$, there exist self-similar collapsing or expanding solutions, see \cite{CS,GW,M2}.  It is worth noting that the energy of the self-similar collapsing solutions can be positive, negative, or zero (see \cite{CS}). M. Had\v{z}i\'c and J. Jang \cite{HJ18} proved the nonlinear stability of the expanding  self-similar solutions in \cite{GW}. We also refer to \cite{HJ190}, where the authors constructed a class of global expanding solutions for $\gamma \in (1, \frac{14}{13})$ or $\gamma = 1 + \frac1m$, $m\in \mathbb{N}\setminus \{1\}$. The constructed solutions have small initial data and compact support.
 For $\gamma \in \left(1, \frac43 \right)$,  Y. Guo, M. Had\v{z}i\'c and J. Jang \cite{GHJ} constructed a family of spherically symmetric collapsing solutions of \eqref{1.1}-\eqref{eq1.5v34} with initial mass close to delta mass, by using the perturbation of pressureless Euler-Poisson system.
In \cite{GHJ2}, Y. Guo, M. Had\v{z}i\'c and J. Jang strictly  proved the existence of the Larson-Penston type solution which is a self-similar solution describing the collapse of a self-gravitating asymptotically flat fluid with the isothermal case $\gamma=1$.
Recently, Y. Guo, M. Had\v{z}i\'c, J.  Jang and M. Schrecker \cite{GHJS} rigorously proved the existence of self-similar collapsing solutions but with infinite mass for the polytrope with  $\gamma \in \left( 1, \frac43\right)$. To the best of our knowledge, there is no construction of expanding solutions near the Lane-Emden stars for $\gamma \in \left( \frac65, \frac43\right)$. It should be pointed out that the viral type arguments used in \cite{YTTZ,MP} for constructing expanding solutions for $\gamma\geq\frac{4}{3}$ are not valid for $\gamma \in \left( \frac65, \frac43\right)$.

In the spherically symmetric setting, the support $\Omega(t)$ is a ball with radius $R(t)$. For $r=|x|\in (0,R(t))$, denote
\begin{align*}
\rho( t,x )=\rho(t, r ),\quad \textbf{u}(t, x )=u(t, r )\frac{x}{r},\quad V(t, x )=V(t, r ).
\end{align*}
Then, the system \eqref{1.1}-\eqref{1.4} can be rewritten as follows
\begin{align}\label{1.5}
&\partial_t\rho+\frac{1}{r^2}\partial_r \left(r^2\rho u \right)=0  &\text{in}\ (0,R(t)),\\
&\rho \left(\partial_tu+u\partial_ru \right)+\partial_r P+\frac{4\pi\rho}{r^2}\int^r_0\rho(t, s ) s^2 \,  \mathrm{d}  s=0 &\text{in}\ (0,R(t)),\label{1.6}
\end{align}
where $R(t)$ is the moving interface of fluids and vacuum states. We impose the kinematic condition:
\begin{align}
\frac{ \mathrm{d} }{ \mathrm{d} t}R(t)= u \left(t, R(t)  \right).
\end{align}
The density of gas $\rho(t, r )>0$ for $0\leq r < R(t)$ and
\begin{align}\label{1.8}
\rho(t, r )=0\ \text{for}\ r\geq R(t).
\end{align}
Our first main result is the following.
\begin{theorem}\label{th1.1}
For $\gamma = \frac43$, consider the problem \eqref{1.1}-\eqref{eq1.5v34}  with spherically symmetric initial data $(\rho_0,u_0)$ which has compact support and finite energy. Then, there exists a spherically symmetric global weak solution $(\rho,u)$ of  \eqref{1.1}-\eqref{eq1.5v34} when
 $M(\rho_0)< M_{K} $. Here, $M_{K}$ is the  mass of Lane-Emden stars for $\gamma=\frac{4}{3}$.
Moreover, if the spherically symmetric global strong solution of \eqref{1.5}-\eqref{1.8} exists with $M( \rho_0) < M_{K}$,
then the support radius
 $R(t)>0$ is bounded away from $0$ and
\begin{align}\label{1.14}
R^2 (t)
\geq \frac{E(\rho_0,u_0)}{M(\rho_0)}t^2+\frac{ 2\int_{\mathbb{R}^3} \rho_0 u_0 \cdot x \,\mathrm{d}x }
{M(\rho_0)}t+\frac{ \int_{\mathbb{R}^3} \rho_0 |x|^2 \,\mathrm{d}x }{M(\rho_0)}.
\end{align}
\end{theorem}
\begin{remark}
The threshold mass $M_{K}$ is sharp in the sense that for $\gamma = \frac43$ there exist self-similar collapsing solutions with mass greater than or equal to $M_{K}$. We refer to \cite{M2} for more details.
\end{remark}
\begin{remark}
The result in Theorem \ref{th1.1} also holds in the non-radial case when the mass of the initial data is less than the critical mass $M_{K}$, except for the existence of a non-radial global  weak solution. More precisely, the diameter of the support  $\Omega(t)$ expands to infinity as time goes to infinity for the global strong solution when the mass of the initial data is less than the critical mass.
\end{remark}
\begin{remark}
For $\gamma=\frac{4}{3}$,  in Theorem 1.5  of \cite{YTTZ}, when the mass is less than a ``critical'' mass $M_c$(see \eqref{eq3.9v63}),
it was proved that the star cannot collapse to a point. We can show that $M_{K}>M_c$. Moreover, when the mass is less than $M_{K}$, it is shown in Theorem \ref{th1.1} that the diameter of the support tends to infinity at least by the linear rate as time goes to infinity.
In \cite{CS}, C.-C. Fu and S.-S. Lin constructed linearly expanding self-similar solutions of \eqref{1.1}-\eqref{eq1.5v34} for $\gamma=\frac{4}{3}$ when the mass is less than $M_{K}$. This suggests that the linear expanding rate in Theorem 1.1 is optimal for general initial data with mass less than  $M_{K}$. In Proposition \ref{pro3.1}, we also show that the critical mass $M_K$ is greater than the ``critical'' mass used in \cite{GLD} to ensure the existence of global weak solutions. Moreover,  in Lemma \ref{le5.1v46} we prove that the critical mass $M_K$ with $K=2AB^{-\frac{4}{3}}$ (see \eqref{eq1.16v53}) equals the Chandrasekhar limit mass, which is the supremum of mass of non-rotating white dwarf stars.

\end{remark}
Next, we construct expanding solutions when $\gamma \in \left( \frac65, \frac43 \right)$. To state the result, we introduce the following notations.
Define
\begin{align*}
Q(\rho)&=3K\int_{\mathbb{R}^3}\rho^\gamma  \,  \mathrm{d} x
-\frac{1}{2}\iint_{\mathbb{R}^3\times\mathbb{R}^3}\frac{\rho(x)\rho(y)}{|x-y|} \,  \mathrm{d} x \mathrm{d} y.
\end{align*}
For any $\mu > 0$, define the action functional
\begin{align*}
S_\mu(\rho)
=\int_{\mathbb{R}^3}\frac{K}{\gamma-1}\rho^\gamma  \, \mathrm{d} x
-\frac{1}{2}\iint_{\mathbb{R}^3\times\mathbb{R}^3}\frac{\rho(x)\rho(y)}{|x-y|}  \, \mathrm{d} x \mathrm{d} y
-V_\mu(R_\mu)\int_{\mathbb{R}^3}\rho  \, \mathrm{d} x,
\end{align*}
where $V_\mu(R_\mu)$ is the value of $V_\mu(r)$ at $R_\mu$,
 and $R_\mu$ is the radius of the support of density $\rho_\mu$ of Lane-Emden star with center density $\rho_\mu(0) = \mu$.
 From Lemma 3.6 in \cite{ZC2020}, we have
\begin{align*}
& M_\mu=\int_{\mathbb{R}^3}\rho_\mu  \, \mathrm{d} x =M_1\mu^\frac{3\gamma-4}{2},\ R_\mu =R_1\mu^\frac{\gamma-2}{2},
\intertext{ and }
&  V_\mu(R_\mu) =-\frac{M_\mu}{R_\mu}=-\frac{M_1}{R_1} \mu^{\gamma-1},
\end{align*}
where $M_1=\int_{\mathbb{R}^3}{\rho_1 } \, \mathrm{d} x$ is the mass of Lane-Emden star with center density ${\rho}_1(0)=1$ and $R_1$ is the radius of the support set of $\rho_1$. By direct calculation, for $\lambda>0$, there holds $\frac{ \mathrm{d} S_{\mu} \left(\lambda^3\rho(\lambda x) \right)}{ \mathrm{d} \lambda}=\frac{1}{\lambda}Q \left(\lambda^3\rho(\lambda x) \right)$ and the density $\rho_\mu$ of the Lane-Emden star satisfies $Q(\rho_\mu)=0$ which is a Pohozaev type identity.

Our second main result is as follows.
\begin{theorem}\label{th1.2}
For $\gamma\in \left(\frac{6}{5},\frac{4}{3} \right)$,
consider the problem \eqref{1.1}-\eqref{eq1.5v34} with spherically symmetric initial data $(\rho_0,u_0)$ which has compact support and finite energy.
Assume that $(\rho_0,u_0)\in \mathcal{I}  $, where
\begin{align}\label{eq1.14v34}
\mathcal{I}
: =  \bigg\{( \rho, u ): Q(\rho)>0, M( \rho) <  \left( \frac{5 \gamma - 6}{ 2( \gamma - 1)} \right)^\frac{2( \gamma - 1)}{5 \gamma - 6} \left( \frac{4 - 3 \gamma }{5 \gamma - 6} \right)^\frac{4- 3\gamma}{5 \gamma - 6} l_1^{ \frac{2( \gamma - 1)}{5 \gamma - 6} } \frac{R_1}{M_1}   \left(E( \rho, u) \right)^\frac{3\gamma - 4}{5 \gamma- 6}\bigg\}
\end{align}
with $l_1 = S_1( \rho_1)$.
Then, there exists a spherically symmetric global weak solution $(\rho,u)$ of  \eqref{1.1}-\eqref{eq1.5v34}.
Moreover, if the  global strong solution of  \eqref{1.5}-\eqref{1.8} exists when  $(\rho_0,u_0)\in \mathcal{I}$, then the support radius $R(t)>0$ is bounded away from $0$ and
\begin{align}\label{1.15}
R^2(t)\geq  \frac{\Lambda }{M(\rho_0)}t^2 +\frac{ 2\int_{\mathbb{R}^3} \rho_0 u_0 \cdot x \,\mathrm{d}x}{M(\rho_0)}t
+\frac{ \int_{\mathbb{R}^3} \rho_0 |x|^2 \,\mathrm{d}x}{M(\rho_0)},
\end{align}
where $\Lambda$ is a positive constant.
\end{theorem}
\begin{remark}\label{re1.6}
From Lemma \ref{cor4.2} and \ref{lem4.1}, the set $\mathcal{I}$ is invariant under the flow of \eqref{1.1}-\eqref{eq1.5v34}.
Moreover, the set $\mathcal{I}$ is non-empty.
Indeed, for the non-rotating star $(\rho_\mu,0)$ and $0<\lambda<1$, there holds $ \left(\lambda^3\rho_\mu(\lambda x),0 \right)\in \mathcal{I}$ (see Remark \ref{re4.5}).
\end{remark}
\begin{remark}
The result in Theorem \ref{th1.2} also holds in the non-radial case when the initial data belongs to the set $\mathcal{I}$, except for  the existence of a non-radial global weak solution. More precisely,  the diameter of the support $\Omega(t)$ expands to infinity as time goes to infinity for the global strong solution when the initial data belongs to the set $\mathcal{I}$.
\end{remark}
\begin{remark}
The existence of the expanding solutions in Theorems \ref{th1.1} and \ref{th1.2} is conditional in the sense that we assume the global weak solutions have enough regularity to ensure the calculation related to the virial identities holds. It should be pointed out that the expansion results remain valid even in the presence of shock formation. While shock wave solutions are weak, we verify that they satisfy the Rankine-Hugoniot jump conditions, ensuring that the virial-type arguments used to establish expansion remain applicable. Specifically, in Remark \ref{re4.1}, we demonstrate that the virial identities hold for piecewise smooth solutions containing shocks, confirming that such discontinuities do not obstruct the expansion mechanism. This indicates that the regularity assumptions of the global solutions in Theorems \ref{th1.1} and \ref{th1.2} are quite mild, as the virial arguments apply beyond the smooth solutions. Moreover, the set of the expanding solutions for $M(\rho_0)<M_{K}$ with $\gamma= \frac{4}{3}$ or for the initial data $(\rho_0,u_0)\in \mathcal{I}$ with $\gamma\in \left(\frac{6}{5},\frac{4}{3} \right)$ is open, which shows a kind of nonlinear stability of the expanding solutions.
\end{remark}
We can also solve the Euler-Poisson equations for $t\in(-\infty,0]$. In fact, the equations \eqref{1.1}-\eqref{eq1.5v34} are invariant under the time reversal transformation: $t\rightarrow-t,\rho\rightarrow\rho,u\rightarrow-u$. From Theorem \ref{th1.1} and \ref{th1.2}, there exists a spherically symmetric global  weak solution $ \left(\tilde{\rho}(t,x),\tilde{u}(t,x) \right)$ of  \eqref{1.1}-\eqref{eq1.5v34} for $t \in[0, \infty)$ when the spherically symmetric initial data $(\rho_0,-u_0)$ with compact support and finite energy satisfies $M(\rho_0)< M_{K}$ for $\gamma= \frac{4}{3}$ or $(\rho_0,u_0)\in \mathcal{I}$ for $\gamma\in \left(\frac{6}{5},\frac{4}{3} \right)$. Then,
$(\rho(t,x),u(t,x))= \left(\tilde{\rho}(-t,x),- \tilde{u}(-t,x) \right)$ is a global weak solution of \eqref{1.1}-\eqref{eq1.5v34} for $t\in(-\infty,0]$ with the  initial data $(\rho_0,u_0)$.
Moreover, if  there exists a  global strong solution of  \eqref{1.5}-\eqref{1.8} for $t\in(-\infty,0]$ when the total mass $M(\rho_0)<M_{K}$ for $\gamma=\frac{4}{3}$ or $(\rho_0,u_0)\in \mathcal{I}$ for $\gamma\in \left(\frac{6}{5},\frac{4}{3} \right)$, then the virial arguments of Theorems \ref{th1.1} and \ref{th1.2} also hold for $t\leq0$. It obtains that the support radius of $(\rho,u)$ satisfies \eqref{1.14} or \eqref{1.15} for $t\in (-\infty,0]$ and consequently the support of the gaseous star expands to infinity as time goes to negative infinity. Therefore, we get
\begin{corollary}
The Lane-Emden stars $\left(  \rho_{\mu},0\right)  \ $for $\gamma\in\left(
\frac{6}{5},\frac{4}{3}\right]  $ are  strongly unstable in the sense that there exist initial data arbitrarily close to $\left(
\rho_{\mu},0\right)  $ in a smooth space such that the support of the solutions tend to infinity as time goes to both positive and negative
infinity provided the global strong solutions exist.
\end{corollary}
\begin{proof}
For any $\varepsilon>0$ small, choose the spherically symmetric initial data
\[
\left(  \rho_{0}^{\varepsilon},u_{0}^{\varepsilon}\right)  =\left\{
\begin{array}
[c]{cc}%
\left(  \left(  1-\varepsilon\right)  \rho_{\mu},0\right)   & \text{when
}\gamma=\frac{4}{3}\\
\left(  \left(  1-\varepsilon\right)  ^{3}\rho_{\mu}(\left(  1-\varepsilon
\right)  r),0\right)   & \text{when }\gamma\in\left(  \frac{6}{5},\frac{4}%
{3}\right)
\end{array}
\right.  .
\]
By Theorems \ref{th1.1} and \ref{th1.2}, when  a spherically symmetric global strong
solution  $\left(  \rho^{\varepsilon}\left(
t,r\right)  ,u^{\varepsilon}\left(  t,r\right)  \right)  $ with the initial data
$\left(  \rho_{0}^{\varepsilon},u_{0}^{\varepsilon}\right)  \ $ exists,  its support
expands to infinity as $t\rightarrow+\infty$ or $-\infty$. Moreover, by the virial argument in our proof, there holds
\begin{align*}
\int_{\mathbb{R}^3}|\rho^\epsilon(t,x)-\rho_\mu(x)||x|^2 \mathrm{d} x\rightarrow \infty,\ \text{as}\ t\rightarrow\pm\infty.
\end{align*}
\end{proof}
Below, we  outline  the proof of Theorems \ref{th1.1} and \ref{th1.2}.
\begin{itemize}
\item   When $\gamma = \frac43$, we consider  the variational problem:
\begin{align*}
\inf \left\{J( \rho): \rho \in L^\frac43 (\mathbb{R}^3)\cap L^1(\mathbb{R}^3),\rho\geq 0,\rho\not\equiv0\right\},
\end{align*}
where $J$ is the functional
\begin{align*}
J( \rho) = \frac{  \left(\int_{\mathbb{R}^3} \rho \,  \mathrm{d} x \right)^\frac23 \int_{\mathbb{R}^3} \rho^\frac43 \, \mathrm{d} x }{ \iint_{\mathbb{R}^3 \times \mathbb{R}^3} \frac{ \rho(x) \rho(y)}{|x- y|} \, \mathrm{d} x  \mathrm{d} y }.
\end{align*}
This variational problem is closely related to the best constant of the Hardy-Littlewood type inequality
\begin{align}\label{eq1.15v45}
\iint_{\mathbb{R}^3 \times \mathbb{R}^3} \frac{ \rho(x) \rho(y)}{ |x - y|}  \, \mathrm{d} x \mathrm{d} y
\leq C_{\min} \left(\int_{\mathbb{R}^3} \rho  \, \mathrm{d} x \right)^\frac23 \int_{\mathbb{R}^3} \rho^\frac43 \, \mathrm{d} x.
\end{align}
It is shown in Theorem \ref{thm3.1} that the minimum of $J$ and consequently the equality in \eqref{eq1.15v45} are attained by the Lane-Emden stars with $\gamma=\frac{4}{3}$ up to some scaling and translation. Moreover, we give the following relation between $C_{\min}$ and the  mass $M_{K}$ of the  Lane-Emden stars
\begin{equation}
C_{\min}=\frac{6K}{M_{K}^{\frac{2}{3}}}. \label{optimal-HL}%
\end{equation}
This is crucial in the proof of Theorem 1.1. In Proposition \ref{pro3.1}, we compare the different mass constants in results of  \cite{GLD,YTTZ,CS} with the critical mass $M_{K}$.
We find that the critical mass $M_0$ in \cite{CS} is equal to the critical mass $M_{K}$. On the other hand, the ``critical'' masses in \cite{GLD,YTTZ} are strictly less than $M_{K}$. By the optimal Hardy-Littlewood type inequality \eqref{eq1.15v45}, the existence of a global weak solution in the spherically symmetric case with mass less than $M_{K}$ is shown by using ideas in \cite{GLD}. More precisely, if the mass is less than the critical mass $M_{K}$, we can show the internal energy $\int_{\mathbb{R}^3}\rho^{\frac{4}{3}}dx$ is bounded. This leads to the key a priori estimates such as the basic energy estimate,
BD-type entropy estimate etc. of solutions of the corresponding Navier-Stokes-Poisson equations.
By similar arguments as in \cite{GLD}, it yields that the Navier-Stokes-Poisson equations have global strong solutions.
Then, we can get the global weak solution of the Euler-Poisson equations by taking the vanishing viscosity limit and using the compensated compactness arguments as in \cite{GLD}. The boundedness of the internal energy implies that the diameter of the support of the weak solution is bounded from below and the star cannot collapse. If additionally we assume that the global strong solution of  \eqref{1.5}-\eqref{1.8} exists, then the second time derivative of the virial $\int_{\mathbb{R}^3} \rho |x|^2 \, \mathrm{d}x$ is bounded below by the energy which is strictly positive when the mass is less than $M_{K}$. Therefore, the diameter of the support of the gaseous star tends to infinity as time goes to infinity.
\begin{remark}
After this work is completed, we learn that the best constant of the inequality \eqref{eq1.15v45} had also been studied by A. Blanchet, J. A. Carrillo, and P. Laurencot \cite{BCL} in their study of the Patlak-Keller-Segel system. They proved that the best constant in \eqref{eq1.15v45} is obtained by following the argument in \cite{Le}. But they did not relate the best constant to the mass of the Lane-Emden star, such as (\ref{optimal-HL}).
\end{remark}

\item When $ \gamma \in \left( \frac65, \frac43 \right) $, we consider the following constrained variational problem:
 \begin{align*}
 \inf \limits_{ \rho \in \mathcal{K}} S_\mu ( \rho),
 \end{align*}
where
\begin{align}\label{K}
\mathcal{K}&= \left\{ \rho  \in L^\gamma  (\mathbb{R}^3)\cap L^1(\mathbb{R}^3): Q(\rho)=0, \rho\geq 0,\rho \not\equiv 0\right\}.
\end{align}
The functional $S_\mu(\rho)$ under the constraint $\mathcal{K}$ is bounded below when $\gamma<\frac{4}{3}$.
In Theorem \ref{thm3.4}, we show that the minimum of this variational problem is attained by the Lane-Emden star $\left(  \rho_{\mu},0\right)  \ $ up to some translation. Then, we construct a family of non-empty invariant sets of the Euler-Poisson equations.
The union of the invariant sets is equivalent to $\mathcal{I}$ in \eqref{eq1.14v34}.
By similar arguments in \cite{GLD}, to show the existence of a global weak solution of Euler-Poisson system,
we first construct strong solutions of Navier-Stokes-Poisson equations. The set $\mathcal{I}$  is shown to be invariant under the flow generated by the Navier-Stokes-Poisson equations. By using this invariant property, we get the a priori estimates of the internal energy of the solutions of the Navier-Stokes-Poisson equations when the initial data belongs to $\mathcal{I}$. Then, by similar arguments for the case $\gamma=\frac{4}{3}$, we obtain the existence of the global weak solutions of the Euler-Poisson equations for initial data in $\mathcal{I}$.
Assume additionally that the  global strong solution of \eqref{1.5}-\eqref{1.8} exists when the initial data belongs to $\mathcal{I}$. Then the set $\mathcal{I}$ is invariant under the flow generated by the Euler-Poisson equations. Moreover, the second derivative of the virial $\int_{\mathbb{R}^3} \rho |x|^2 \, \mathrm{d}x$ is shown to be bounded below by the functional $Q$ and therefore remains strictly positive. Consequently, the diameter of the support of the solution expand to infinity as time tends to infinity.
\end{itemize}

\begin{remark}
The construction of expanding solutions near Lane-Emden stars is motivated by the variational approach used by  H. Berestycki and T. Cazenave \cite{BC}, which was originally developed to prove blow-up for the nonlinear Schr\"{o}dinger equations. See also the textbook \cite{CA} and the survey \cite{Ste} for detailed discussions of this method. But there are key differences between our approach and the classical argument in \cite{BC}. The variational arguments are used in \cite{BC} to construct an invariant set of initial data leading to blow-up in finite time for the nonlinear Schr\"{o}dinger equation. In our case, we constructed an invariant set of expanding solutions by using a similar variational characterization of the Lane-Emden stars.
However, such an argument cannot be applied to construct collapsing (blow-up) solutions for the Euler-Poisson system. The reason is that, unlike the case of the Schr\"{o}dinger equations, for the Euler-Poisson case there is an additional kinetic energy term in the time derivatives of the second inertia and this prevents the application of the similar arguments as in \cite{BC} to construct collapsing solutions.

 When $ \gamma =\frac43 $, expanding solutions are proved in \cite{YTTZ} for initial data with positive total energy. The proof implicitly assume the global existence of strong solutions for such initial data. However, the existence of self-similar collapsing solutions (see \cite{HJ18}) with positive energy shows that positive energy does not even guarantee the existence of global weak solutions. By contrast, for general initial data with mass less $M_{K}$ we proved the global existence of weak solutions and the expansion of the support by assuming slightly more regularity on the solutions.
%
\end{remark}

Now, we turn to the study of the white dwarf stars.   The pressure of the white dwarf star is
\begin{align}\label{eq1.16v53}
P_w(\rho)=Af(\xi ),\ \rho=B \xi^3,
\end{align}
where $A,B$ are two positive constants and
\begin{align*}
f(\xi )=  \xi \sqrt{\xi^2+1} \left(2 \xi^2-3 \right)+3\ln\Big( \xi+\sqrt{1+ \xi^2}\Big)
= 8\int^\xi_0\frac{u^4}{\sqrt{1+u^2}} \, \mathrm{d} u.
\end{align*}
We see that $P_w(\rho)\approx K \rho^\frac{4}{3}$ for large $\rho$ with $K=2AB^{-\frac{4}{3}}$ which shows the relation between the white dwarf stars and the polytropes with $\gamma=\frac{4}{3}$. The non-rotating white dwarf stars are known to have a maximum mass, which is called Chandrasekhar limit mass (\cite{Chan1939}).
We can show
\begin{theorem}\label{th5.2}
The Chandrasekhar limit mass $M_{ch}$ equals the  mass $M_K$ of the Lane-Emden stars when $P(\rho) = K \rho^\frac43$ with $K= 2AB^{- \frac43}$.
If the mass of the white dwarf star $M(\rho_0)$ is strictly less than $M_{ch}$, then the star cannot collapse to a point.
\end{theorem}
The fact that the Chandrasekhar limit mass  is exactly the  mass of the Lane-Emden star when $\gamma = \frac43$ is mentioned in \cite{Chan1939,Chan1984}. We provide a rigorous proof of this important fact in Lemma \ref{le5.1v46}.
By the asymptotic behavior of the equation of states of the white dwarf stars, we show that the internal energy of the white dwarf stars is controlled by the internal energy of the polytrope when $\gamma = \frac43$. Then by the optimal Hardy-Littlewood type inequality established in Theorem \ref{thm3.1}, we can show that the internal energy of the white dwarf stars is bounded when the mass is less than the Chandrasekhar limit mass $M_{ch}$. This ensures the white dwarf stars cannot collapse to a point. Recently in \cite{CHW}, G.-Q. Chen, F. Huang, T. Li, W. Wang and Y. Wang considered the gobal weak solutions of Euler-Poisson system \eqref{1.1}-\eqref{1.4} with  general equations of states including the white dwarf stars. In particular, for white dwarf stars and radially symmetric initial data with mass less than $M_{ch}$, by using our characterization of $M_{ch}$ and the relation (\ref{optimal-HL}) they obtained similar estimates for the internal energy of solutions of the Navier-Stokes-Poisson equations. Then the existence of global weak solution of Euler-Poisson system is proved by concentration-compactness arguments.

\begin{remark}
The white dwarf stars cannot expand like the polytrope with $\gamma=\frac{4}{3}$ when the mass is less than the Chandrasekhar limit mass. Indeed, there exists a family of non-rotating white dwarf stars with mass ranging from $0$ to $M_{ch}$.
\end{remark}
\textbf{Notation.}
We use $X \lesssim Y$ when $X \le CY$ for some constant $C> 0$, $X\gtrsim Y$ when $ C X \ge Y$, and $X \sim Y$ when $X \lesssim Y \lesssim X$. $B_R = B_R(0)\subset\mathbb{R}^3$ is a ball centered at the origin with radius $R> 0$.

The Fourier transform is defined to be
\begin{align*}
\hat{f} ( \xi) = \frac1{(2\pi)^\frac{3}2} \int_{\mathbb{R}^3} f(x) e^{-i x \cdot \xi} \,\mathrm{d}x.
\end{align*}
Let $L^p_{rad}(\mathbb{R}^3)= \left\{f\in L^p(\mathbb{R}^3):   f~ \text{is spherically symmetric} \right\}$.

This paper is organized as follows. In Section \ref{se2v28}, we prove some compactness lemmas which are crucial for the variational problems studied later.
In Section \ref{se3v28}, we consider the polytrope with $\gamma = \frac43$ and prove Theorem \ref{th1.1}.
In Section \ref{se4v28}, we consider the polytrope with $\gamma \in \left( \frac65, \frac43 \right)$ and prove Theorem \ref{th1.2}.
In Section \ref{se5v28}, we consider the white dwarf stars and prove Theorem \ref{th5.2}.
Lastly, the high dimensional cases {\color{red}are} studied in Section \ref{se6v28}.

\section{Preliminaries}\label{se2v28}
In this section, we prove some compactness results which are vital in the study of variational problems in Subsections \ref{subse3.1v31} and \ref{subse3.2v31}.

The solution of the Poisson equation
\begin{align*}
\begin{cases}
\Delta  V(x)=4\pi\rho(x),\ x\in\mathbb{R}^3,\\
\lim\limits_{|x|\rightarrow\infty}V(x)=0,
\end{cases}
\end{align*}
can be represented by
\begin{align*}
V(x)=-\int_{\mathbb{R}^3}\frac{\rho(y)}{|x-y|}\, \mathrm{d} y.
\end{align*}
Then, the gravitational potential energy  of the gaseous stars can be written as
\begin{align}\label{eq2.1v56}
\frac{1}{2}\iint_{\mathbb{R}^3\times\mathbb{R}^3}\frac{\rho(x)\rho(y)}{|x-y|}\, \mathrm{d} x \mathrm{d} y
= - \frac{1}{2}\int_{\mathbb{R}^3}\rho(x)  V(x)\,  \mathrm{d} x
&= \frac{1}{8\pi}\int_{\mathbb{R}^3} |\nabla V(x) |^2\,  \mathrm{d} x.
\end{align}
From the H\"older inequality and Sobolev embedding inequality, we have
\begin{align*}
\int_{\mathbb{R}^3}|\nabla V|^2\,  \mathrm{d} x
&=-4\pi\int_{\mathbb{R}^3}\rho V\,  \mathrm{d} x
\leq 4\pi \|\rho\|_{L^\frac{6}{5} \left(\mathbb{R}^3 \right)} \|V\|_{L^6 \left(\mathbb{R}^3 \right)}
\lesssim \|\rho\|_{L^\frac{6}{5} \left(\mathbb{R}^3 \right)} \|\nabla V\|_{L^2 \left(\mathbb{R}^3 \right)}.
\end{align*}
Therefore, for $\frac65 < \gamma \le \frac43  $, we have
\begin{align}\label{2.8}
\iint_{\mathbb{R}^3\times \mathbb{R}^3}\frac{\rho(x)\rho(y)}{|x-y|}\,  \mathrm{d} x \mathrm{d} y\lesssim
 \|\rho\|_{L^1 \left(\mathbb{R}^3 \right)}^{\frac{5\gamma-6}{3(\gamma-1)}}  \|\rho\|_{L^\gamma \left(\mathbb{R}^3 \right)}^{\frac{\gamma}{3(\gamma-1)}}.
\end{align}
The following  lemmas  play important roles in the study of variational problems later.
\begin{lemma}\label{lem2.1}
For $\gamma>1$, let $\{\rho_n\}\subset L^1 \left(\mathbb{R}^3 \right)\cap L^\gamma \left(\mathbb{R}^3 \right)$ be bounded and $\rho_n\geq 0$.
Then, there exists $\bar{\rho} \ge 0$  and a subsequence of $\{\rho_n\}$ which is still denoted by itself such that
\begin{align*}
\rho_n\rightharpoonup\bar{\rho}\ \text{in}\ L^\gamma \left(\mathbb{R}^3 \right).
\end{align*}
In addition, we have $\bar{\rho}\in L^1 \left(\mathbb{R}^3 \right)$ and
$$
 \left\|\bar{\rho} \right\|_{L^1 \left(\mathbb{R}^3 \right)}\leq \liminf_{n\to\infty} \|\rho_n\|_{L^1 \left(\mathbb{R}^3 \right)}.
$$
\end{lemma}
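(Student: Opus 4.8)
The plan is to extract everything from the definition of weak convergence in $L^\gamma$: since $\rho_n\rightharpoonup\bar\rho$ in $L^\gamma(\mathbb{R}^3)$, we have $\int_{\mathbb{R}^3}\rho_n g\,\mathrm{d}x\to\int_{\mathbb{R}^3}\bar\rho g\,\mathrm{d}x$ for every $g\in L^{\gamma'}(\mathbb{R}^3)$, where $\gamma'=\gamma/(\gamma-1)<\infty$ because $\gamma>1$. The one structural remark that makes the proof work is that for any ball $B_R\subset\mathbb{R}^3$ the indicator $\mathbf 1_{B_R}$ lies in $L^{\gamma'}(\mathbb{R}^3)$, since $B_R$ has finite Lebesgue measure; so indicators of subsets of balls are always admissible test functions. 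Everything then reduces to testing against such functions and exhausting $\mathbb{R}^3$ by balls.

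First I would verify $\bar\rho\ge 0$ a.e. Fix $R>0$ and take $g=\mathbf 1_{\{\bar\rho<0\}\cap B_R}\in L^{\gamma'}(\mathbb{R}^3)$. Since $\rho_n\ge 0$ we have $\int_{\mathbb{R}^3}\rho_n g\,\mathrm{d}x\ge 0$ for every $n$, and passing to the limit gives $\int_{\{\bar\rho<0\}\cap B_R}\bar\rho\,\mathrm{d}x\ge 0$; as the integrand is strictly negative there, the set $\{\bar\rho<0\}\cap B_R$ must be Lebesgue-null. Letting $R\to\infty$ shows $|\{\bar\rho<0\}|=0$. Next, for the $L^1$ estimate I would test against $g=\mathbf 1_{B_R}$: weak convergence yields
\[
\int_{B_R}\bar\rho\,\mathrm{d}x=\lim_{n\to\infty}\int_{B_R}\rho_n\,\mathrm{d}x\le\liminf_{n\to\infty}\int_{\mathbb{R}^3}\rho_n\,\mathrm{d}x=\liminf_{n\to\infty}\|\rho_n\|_{L^1(\mathbb{R}^3)},
\]
where the inequality uses $\rho_n\ge 0$. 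The right-hand side is finite since $\{\rho_n\}$ is bounded in $L^1(\mathbb{R}^3)$, and because $\bar\rho\ge 0$ the monotone convergence theorem applied as $R\to\infty$ gives both $\bar\rho\in L^1(\mathbb{R}^3)$ and $\|\bar\rho\|_{L^1(\mathbb{R}^3)}\le\liminf_{n\to\infty}\|\rho_n\|_{L^1(\mathbb{R}^3)}$.

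There is no real obstacle here — the statement is a routine weak-lower-semicontinuity fact — but the point deserving attention is that one cannot test directly against $\mathbf 1_{\mathbb{R}^3}\notin L^{\gamma'}(\mathbb{R}^3)$, so the total mass is recovered only through the exhaustion by balls combined with the nonnegativity of $\bar\rho$; this is precisely where the hypothesis $\rho_n\ge 0$ (not merely boundedness in $L^1$) enters, both to get $\bar\rho\ge 0$ and to justify the monotone-convergence passage to the limit in $R$.
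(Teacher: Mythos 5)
Your proof is correct and follows essentially the same route as the paper: both arguments test the weak convergence against characteristic functions of finite-measure sets (admissible since they lie in $L^{\gamma'}$), use the nonnegativity of $\rho_n$ to bound the localized mass by $\liminf_n\|\rho_n\|_{L^1}$, and then exhaust $\mathbb{R}^3$. The only differences are cosmetic: you exhaust by balls and invoke monotone convergence where the paper sums over unit-measure spherical shells, and you prove $\bar{\rho}\ge 0$ directly rather than by the paper's contradiction with a closed subset of $\{\bar{\rho}<0\}$.
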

\begin{proof}
First, we claim $\bar{\rho}\geq 0$. Note that
\begin{align*}
\lim_{n\to\infty}\int_{\mathbb{R}^3}\rho_ng\,  \mathrm{d} x= \int_{\mathbb{R}^3}\bar{\rho}g \, \mathrm{d} x,\ \forall g\in L^{\gamma'} \left(\mathbb{R}^3 \right),
\end{align*}
where $\frac{1}{\gamma'}+\frac{1}{\gamma}=1.$
If the set $E=\bigcup\limits_{k<0} \left\{x\in\mathbb{R}^3:\bar{\rho}(x)<\frac{1}{k} \right\}$ satisfies $\textrm{meas} E>0$,
then there exists $k'<0$ such that $\text{meas}\left\{x\in\mathbb{R}^3:\bar{\rho}(x)<\frac{1}{k'} \right\}>0$.
There exists a closed set $E'\subset \left\{x\in\mathbb{R}^3:\bar{\rho}(x)<\frac{1}{k'} \right\}$ such that $0<\textrm{meas} E'< \infty$.
Define the characteristic function of a set $U$ by
\begin{align*}
\chi_{U}=\left\{
              \begin{array}{ll}
                1, & x\in U, \\
                0, & x\notin U.
              \end{array}
            \right.
\end{align*}
Taking $g=\chi_{E'}$, we have
\begin{align*}
0\leq \lim_{n\to\infty}\int_{\mathbb{R}^3}\rho_ng\,  \mathrm{d} x
=\int_{\mathbb{R}^3}\bar{\rho}g\,  \mathrm{d} x<0.
\end{align*}
It is a contradiction.

Now, we separate $\mathbb{R}^3$ to the sum of  spherical shells $\{T_k\}_{k\in \mathbb{Z}}$, where the measure of every spherical shell $T_k$ is $1$.
Let $\rho^{(k)}=\rho\chi_{T_k}$ and $\rho_n^{(k)}=\rho_n \chi_{T_k}$. Taking $g=\chi_{T_k}$, we have
\begin{align}\label{2.14}
\lim_{n\to\infty}\int_{\mathbb{R}^3}\rho^{(k)}_n \, \mathrm{d} x=\int_{\mathbb{R}^3}\bar{\rho}^{(k)} \, \mathrm{d} x.
\end{align}
Note that
\begin{align}\label{eq2.3v54}
\int_{\mathbb{R}^3}\bar{\rho}\,  \mathrm{d} x&=\sum_{k\in\mathbb{Z}}\int_{\mathbb{R}^3}\bar{\rho}^{(k)}\,  \mathrm{d} x
=\lim_{K\to\infty}\sum_{|k|\leq K}\int_{\mathbb{R}^3}\bar{\rho}^{(k)}\,  \mathrm{d} x.
\end{align}
For fixed $K$ large enough, by \eqref{2.14}, we have that for any $\epsilon>0$, there exists $N(\epsilon,K)>0$ such that
\begin{align}\label{eq2.4v54}
\sum_{|k|\leq K}\int_{\mathbb{R}^3}\bar{\rho}^{(k)}\,  \mathrm{d} x\leq \sum_{|k|\leq K}\int_{\mathbb{R}^3}\rho_n^{(k)}\,  \mathrm{d} x+\epsilon\leq \int_{\mathbb{R}^3}\rho_n\,  \mathrm{d} x+\epsilon,\ \forall n\geq N.
\end{align}
Because $\rho_n$ is bounded in $L^1 \left(\mathbb{R}^3 \right)$ for all $n\in\mathbb{N}$, by \eqref{eq2.3v54} and \eqref{eq2.4v54}, we have
\begin{align*}
\left\|\bar{\rho} \right\|_{L^1 \left(\mathbb{R}^3 \right)}\leq \liminf_{n\to\infty} \|\rho_n \|_{L^1 \left(\mathbb{R}^3 \right)},
\end{align*}
which completes the proof.
\end{proof}
\begin{lemma}\cite{R1}\label{lem2.2}
For  $\gamma>\frac{6}{5}$, we have for any $R'>R>0$, the mapping
\begin{align*}
\frac{1}{4\pi}\Delta  V(\rho)=\rho\in L^\gamma \left(\mathbb{R}^3 \right)\mapsto \chi_{B_{R'}}\nabla V \left(\chi_{B_R}\rho \right)\in L^2 \left(B_{R'} \right)
\end{align*}
is compact.
\end{lemma}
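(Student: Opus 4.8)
The plan is to deduce the compactness from elliptic regularity for the Newtonian potential together with the Rellich--Kondrachov theorem, the exponent threshold in the latter turning out to be precisely the hypothesis $\gamma>\frac65$. Since the operator depends on $\rho$ only through $\chi_{B_R}\rho$, it suffices to show that whenever $\{\rho_n\}$ is a bounded sequence in $L^\gamma(\mathbb{R}^3)$ with $\text{supp}\,\rho_n\subset\overline{B_R}$, the sequence $\chi_{B_{R'}}\nabla V(\rho_n)$ has a subsequence convergent in $L^2(B_{R'})$.

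First I would record two classical bounds on the potential. Because each $\rho_n$ is supported in the fixed ball $\overline{B_R}$ and $\gamma>\frac65$, H\"older's inequality on a bounded domain gives a uniform bound for $\rho_n$ in $L^{6/5}(\mathbb{R}^3)$. Writing $\nabla V(\rho_n)=K*\rho_n$ with $K(z)=z/|z|^{3}$ and noting that $|K(z)|=|z|^{-2}$ lies in the weak space $L^{3/2,\infty}(\mathbb{R}^3)$, Young's inequality for weak Lebesgue spaces (the weak-type Hardy--Littlewood--Sobolev inequality) yields $\|\nabla V(\rho_n)\|_{L^2(\mathbb{R}^3)}\lesssim\|\rho_n\|_{L^{6/5}(\mathbb{R}^3)}\lesssim 1$; incidentally this already shows the map in the statement is a well-defined bounded linear operator $L^\gamma(\mathbb{R}^3)\to L^2(B_{R'})$, so only precompactness of images of bounded sets is at issue. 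Second, since $\triangle V(\rho_n)=4\pi\rho_n$, the Calder\'on--Zygmund estimate for the Laplacian gives $\|\nabla^2 V(\rho_n)\|_{L^\gamma(\mathbb{R}^3)}\lesssim\|\rho_n\|_{L^\gamma(\mathbb{R}^3)}\lesssim 1$ for every $1<\gamma<\infty$.

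Now put $p:=\min\{\gamma,2\}\in(\frac65,2]$. Restricting to the bounded domain $B_{R'}$ and using $L^q(B_{R'})\hookrightarrow L^{p}(B_{R'})$ for $q\ge p$, the two global bounds above show that $\{\nabla V(\rho_n)\}$ is uniformly bounded in $W^{1,p}(B_{R'})$. Since $p>\frac65$ (and $p<3$), the Sobolev conjugate satisfies $p^{*}=\frac{3p}{3-p}>2$, so the Rellich--Kondrachov theorem furnishes the compact embedding $W^{1,p}(B_{R'})\hookrightarrow\hookrightarrow L^{2}(B_{R'})$. Hence some subsequence of $\chi_{B_{R'}}\nabla V(\rho_n)$ converges in $L^2(B_{R'})$, which is the assertion.

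I do not expect a genuine obstacle. The hypothesis $\gamma>\frac65$ is used only to guarantee $p^{*}>2$, that is, that a first-order Sobolev space on $B_{R'}$ embeds \emph{compactly}, not merely continuously, into $L^2$; the rest is bookkeeping with two standard potential-theoretic estimates. If one prefers to avoid Calder\'on--Zygmund theory, an alternative is to note that on $B_{R'}$ only the truncated kernel $\widetilde K:=K\chi_{\{|z|\le R+R'\}}\in L^s(\mathbb{R}^3)$, $s<\frac32$, is relevant, and to split $\widetilde K=\widetilde K\chi_{\{|z|\le\delta\}}+\widetilde K\chi_{\{\delta<|z|\le R+R'\}}$: by Young's inequality the first convolution is small in $L^2(B_{R'})$ uniformly in $n$ (this is where $\gamma>\frac65$ is needed), while the second is a convolution with a bounded, compactly supported kernel and so is equicontinuous in $L^2$; the Riesz--Kolmogorov criterion together with a diagonal argument over $\delta\to0$ then finishes.
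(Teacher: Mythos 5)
Your argument is correct, and it is worth noting that the paper itself offers no proof of Lemma \ref{lem2.2}: the statement is simply quoted from \cite{R1}, so any complete argument here is a self-contained substitute rather than a variant of something in the text. Your main route is sound: since only $\chi_{B_R}\rho$ enters, a bounded sequence may be assumed supported in $\overline{B_R}$, H\"older on the bounded ball gives a uniform $L^{6/5}$ bound (this is exactly where $\gamma>\frac65$ enters), the weak Young/HLS inequality with $|z|^{-2}\in L^{3/2,\infty}$ bounds $\nabla V(\rho_n)$ in $L^2(\mathbb{R}^3)$, Calder\'on--Zygmund bounds $\nabla^2V(\rho_n)$ in $L^\gamma(\mathbb{R}^3)$, and with $p=\min\{\gamma,2\}>\frac65$ one gets uniform $W^{1,p}(B_{R'})$ bounds with $p^*=\frac{3p}{3-p}>2$, so Rellich--Kondrachov yields an $L^2(B_{R'})$-convergent subsequence; linearity of $\rho\mapsto\chi_{B_{R'}}\nabla V(\chi_{B_R}\rho)$ then gives compactness of the operator. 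Your alternative argument (truncating the kernel to $|z|\le R+R'$, splitting at scale $\delta$, using that $|z|^{-2}\chi_{\{|z|\le\delta\}}\in L^s$ with small norm for $s<\frac32$ — again the only place $\gamma>\frac65$ is needed — and treating the remaining bounded compactly supported kernel by translation continuity and Riesz--Kolmogorov) is also correct and is essentially the classical potential-theoretic argument used in the gaseous-star/Vlasov--Poisson stability literature from which \cite{R1} comes; it has the advantage of avoiding singular-integral theory, while the first route is shorter once the Calder\'on--Zygmund and compact Sobolev embedding theorems are taken as given. The only point to state carefully in a written version is that in the endpoint case one cannot use strong Young with the pair $\left(L^{6/5},L^{3/2}\right)$, since $|z|^{-2}\notin L^{3/2}_{loc}$; you correctly sidestep this with the weak-type inequality globally and with $s<\frac32$ in the truncated splitting.
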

Now, we establish the compactness property of the spherically symmetric solution operator of the Poisson equation.
\begin{lemma}\label{lem2.3}
For $\gamma>\frac{6}{5}$, let  $\{\rho_n\}$ be a bounded sequence in $ L_{rad}^\gamma \left(\mathbb{R}^3 \right)\cap L_{rad}^1 \left(\mathbb{R}^3 \right)$, $\rho_n\geq 0$ and $\rho_n\rightharpoonup\bar{\rho}$ weakly in $L_{rad}^\gamma \left(\mathbb{R}^3 \right)$. Then, we have $\nabla V(\rho_n)\rightarrow \nabla V \left(\bar{\rho} \right)$ strongly in $L_{rad}^2 \left(\mathbb{R}^3 \right)$, where $\Delta  V= 4\pi\rho$.
\end{lemma}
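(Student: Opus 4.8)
The plan is to decompose $\mathbb{R}^3$ into an interior ball $B_R$ and its exterior, handle the interior part with the compact embedding of Lemma \ref{lem2.2}, and control the exterior tail uniformly in $n$ using the spherical symmetry together with the uniform $L^1\cap L^\gamma$ bound. First I would fix $\varepsilon>0$ and write, for $R'>R>0$,
\begin{align*}
\|\nabla V(\rho_n)-\nabla V(\bar\rho)\|_{L^2(\mathbb{R}^3)}
\leq \|\nabla V(\rho_n)-\nabla V(\bar\rho)\|_{L^2(B_{R'})}
+\|\nabla V(\rho_n)-\nabla V(\bar\rho)\|_{L^2(\mathbb{R}^3\setminus B_{R'})},
\end{align*}
and further split each $\nabla V$ according to the mass inside and outside $B_R$, i.e. $\nabla V(\rho)=\nabla V(\chi_{B_R}\rho)+\nabla V(\chi_{B_R^c}\rho)$. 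The first (interior) contribution, $\chi_{B_{R'}}\nabla V(\chi_{B_R}\rho_n)\to \chi_{B_{R'}}\nabla V(\chi_{B_R}\bar\rho)$ in $L^2(B_{R'})$, is exactly the content of Lemma \ref{lem2.2} applied to the weakly convergent sequence $\rho_n\rightharpoonup\bar\rho$ in $L^\gamma$; since $\rho_n\geq0$ is bounded in $L^1$, Lemma \ref{lem2.1} guarantees $\bar\rho\in L^1$, $\bar\rho\geq0$, so $\nabla V(\bar\rho)$ is well-defined.

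The crux is the exterior estimate: I must show that for $r=|x|$ large, $|\nabla V(\rho_n)(x)|$ is small uniformly in $n$, and likewise for the exterior-mass piece $\nabla V(\chi_{B_R^c}\rho_n)$ on all of $\mathbb{R}^3$. Here spherical symmetry is essential: for a radial density $\rho$,
\begin{align*}
\nabla V(\rho)(x)=\frac{4\pi}{r^2}\Big(\int_0^r \rho(s)s^2\,\mathrm{d}s\Big)\frac{x}{r},
\end{align*}
so $|\nabla V(\rho)(x)|\leq \dfrac{M(\rho)}{r^2}$ pointwise, giving $\|\nabla V(\rho_n)\|_{L^2(\mathbb{R}^3\setminus B_{R'})}^2\leq C\,(\sup_n M(\rho_n))^2/R'$, which tends to $0$ as $R'\to\infty$ uniformly in $n$; the same bound applies to $\bar\rho$ via Fatou/Lemma \ref{lem2.1}. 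For the exterior-mass piece on $B_{R'}$, note $\chi_{B_R^c}\rho_n$ has its support in $\{|x|\geq R\}$, so by the same formula $\nabla V(\chi_{B_R^c}\rho_n)$ vanishes on $B_R$ and on the annulus $R\leq r\leq R'$ is bounded by $\frac{1}{r^2}\int_R^{R'}\rho_n(s)4\pi s^2\,\mathrm{d}s$; this is controlled in $L^2(B_{R'}\setminus B_R)$ by a quantity involving $\int_{B_R^c}\rho_n\,\mathrm{d}x$. To make this small uniformly in $n$ I would instead observe that by interpolating $L^1$ and $L^\gamma$ one controls $\|\rho_n\|_{L^{6/5}}$ uniformly, hence by \eqref{2.8}-type estimates $\|\nabla V(\rho_n)\|_{L^2}$ is uniformly bounded, and then use that the ``mass at infinity'' $\sup_n\int_{|x|\geq R}\rho_n\,\mathrm{d}x$ can be made arbitrarily small by the radial pointwise decay $\rho_n(r)\lesssim r^{-3}\|\rho_n\|_{L^1}$ — no, more carefully, by the uniform $L^\gamma$ bound and Hölder on shells. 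The cleanest route is: $\|\nabla V(\chi_{B_R^c}\rho_n)\|_{L^2(\mathbb{R}^3)}^2 = \frac{1}{4\pi}\iint_{|x|,|y|\geq R}\frac{\rho_n(x)\rho_n(y)}{|x-y|}\,\mathrm{d}x\,\mathrm{d}y \leq 4\pi C_1^2\|\chi_{B_R^c}\rho_n\|_{L^{6/5}}^2$, and $\|\chi_{B_R^c}\rho_n\|_{L^{6/5}}\to 0$ as $R\to\infty$ uniformly in $n$ because $\|\rho_n\|_{L^{6/5}}$ is uniformly bounded while, by the radial decay $\rho_n(r)\leq C r^{-2}\|\rho_n\|^{?}$...

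Therefore I would finish as follows. Given $\varepsilon>0$, choose $R$ so large that $\sup_n\|\chi_{B_R^c}\rho_n\|_{L^{6/5}}<\varepsilon$ (possible since radial functions bounded in $L^1\cap L^\gamma$ have uniformly small $L^{6/5}$ mass outside a large ball — this uses the pointwise bound $\rho_n(r)\le C\|\rho_n\|_{L^1\cap L^\gamma}\,r^{-6/5}$ type decay for radial $L^1\cap L^\gamma$ functions, or more simply a uniform integrability/tightness argument), then choose $R'>R$ so large that the pure-exterior $L^2$ tails on $\mathbb{R}^3\setminus B_{R'}$ are $<\varepsilon$ for all $n$ and for $\bar\rho$; then apply Lemma \ref{lem2.2} on $B_{R'}$ to obtain $n_0$ with the interior piece $<\varepsilon$ for $n\geq n_0$. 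Combining these three bounds via the triangle inequality yields $\|\nabla V(\rho_n)-\nabla V(\bar\rho)\|_{L^2(\mathbb{R}^3)}\lesssim \varepsilon$ for $n\geq n_0$, and since $\varepsilon$ is arbitrary the strong convergence in $L^2_{rad}(\mathbb{R}^3)$ follows. I expect the main obstacle to be making the ``no mass escapes to infinity'' statement precise and uniform in $n$ — i.e. establishing the uniform smallness of $\|\chi_{B_R^c}\rho_n\|_{L^{6/5}}$ as $R\to\infty$ — which is where spherical symmetry (through the explicit radial formula for $\nabla V$ and the decay of radial $L^1\cap L^\gamma$ functions) does the real work that a general weakly convergent sequence would not permit.
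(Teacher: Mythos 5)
Your overall decomposition (interior handled by Lemma \ref{lem2.2}, exterior handled via the explicit radial formula) is the same as the paper's, and the interior/far-field pieces are fine. But the step you yourself flag as the crux is based on a false claim: it is \emph{not} true that $\sup_n\|\chi_{B_R^c}\rho_n\|_{L^{6/5}(\mathbb{R}^3)}\to 0$ as $R\to\infty$ for radial sequences bounded in $L^1\cap L^\gamma$, nor do such functions enjoy any pointwise decay without monotonicity, nor is any tightness available from weak $L^\gamma$ convergence. A counterexample: take thin shells $\rho_n=\chi_{\{n\le |x|\le n+(4\pi n^2)^{-1}\}}$, which are radial, have mass $\approx 1$, are bounded in every $L^p$, converge weakly to $0$ in $L^\gamma$, yet satisfy $\|\chi_{B_R^c}\rho_n\|_{L^{6/5}}\approx 1$ for all $n$ large, for every fixed $R$. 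So the route through the Hardy--Littlewood/Sobolev bound $\|\nabla V(\chi_{B_R^c}\rho_n)\|_{L^2}\lesssim\|\chi_{B_R^c}\rho_n\|_{L^{6/5}}$ cannot close the argument: mass (and $L^{6/5}$ norm) may escape to infinity, and indeed the lemma is true in spite of that, not because it is excluded.

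The fix is the estimate you actually wrote down in passing and then abandoned, which is exactly the paper's argument: for a radial density supported in $B_R^c$ with total mass at most $A$, Newton's theorem gives $\nabla V(\chi_{B_R^c}\rho)(r)=0$ for $r<R$ and $|\nabla V(\chi_{B_R^c}\rho)(r)|\le A r^{-2}$ for $r\ge R$, so applying this to the \emph{difference} $\rho_n-\bar\rho$ (using Lemma \ref{lem2.1} to bound $\|\bar\rho\|_{L^1}$ by $A$),
\begin{align*}
\bigl\|\nabla V(\chi_{B_R^c}\rho_n)-\nabla V(\chi_{B_R^c}\bar\rho)\bigr\|_{L^2(\mathbb{R}^3)}^2
\lesssim \int_R^\infty \frac{A^2}{r^4}\,r^2\,\mathrm{d}r \lesssim \frac{A^2}{R},
\end{align*}
which is small uniformly in $n$ with no smallness of the exterior mass whatsoever; the $r^{-2}$ decay of the field over the region $r\ge R$ does all the work. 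Combined with your interior step (Lemma \ref{lem2.2} on $B_{R'}$, plus the $\lesssim (R'-R)^{-1}$ bound for the interior-sourced field outside $B_{R'}$, as in the paper) this closes the proof. So the gap is not a technicality: the mechanism you identified as the ``real work'' (uniform tightness of $L^{6/5}$ tails) is unavailable and unnecessary, and it is precisely the radial field decay that replaces it.
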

\begin{proof}
Let $B^c_{R'}= \mathbb{R}^3\setminus B_{R'}$. For  $0<R<R'$, we have
\begin{align*}
\int_{B^c_{R'}} \left|\nabla V \left(\chi_{B_R}\rho_n \right) \right|^2 \, \mathrm{d}x
\lesssim\frac{1}{R'-R}   \|\chi_{B_R}\rho_n \|^2_{L^1 \left(\mathbb{R}^3 \right)}\lesssim\frac{1}{R'-R},\ \forall\, n\in\mathbb{N},
\end{align*}
which is sufficiently small for $R'>0$ large enough. Therefore, from Lemma \ref{lem2.2}, for any $R>0$, there holds
\begin{align}\label{2.20}
\nabla V \left(\chi_{B_R}\rho_n \right)\rightarrow \nabla V \left(\chi_{B_R}\bar{\rho} \right)\ \text{strongly in}\ L^2 \left(\mathbb{R}^3 \right).
\end{align}

By Lemma \ref{lem2.1} and the fact that $\{ \rho_n \}$ is bounded in $L^1$, we have
\begin{align*}
&\left\|\nabla V \left(\chi_{B^c_{R}}\rho_n \right)-\nabla V \left(\chi_{B^c_{R}}\bar{\rho} \right) \right\|_{L^2_{rad} (\mathbb{R}^3)}^2\\
=&4\pi\int^{ \infty}_0 \left|\partial_rV \left(\chi_{B^c_{R}}\rho_n \right)-\partial_rV  \left(\chi_{B^c_{R}}\bar{\rho} \right) \right|^2 r^2  \, \mathrm{d} r\\
=&(4\pi)^3\int^{ \infty}_0\Big|\frac{1}{r^2}\int^r_0\chi_{B^c_R}(\rho_n-\bar{\rho})s^2 \, \mathrm{d} s\Big|^2 r^2  \, \mathrm{d} r\\
\lesssim &\int^{\infty}_R\Big|\frac{1}{r^2}\int^r_0\chi_{B^c_R} (\rho_n-\bar{\rho}) s^2 \, \mathrm{d} s\Big|^2r^2 \, \mathrm{d} r
+\int^R_0\Big|\frac{1}{r^2}\int^r_0\chi_{B^c_R}(\rho_n-\bar{\rho})s^2  \, \mathrm{d} s\Big|^2r^2  \, \mathrm{d} r\\
\lesssim&\int^{ \infty}_R\Big|\frac{1}{r^2}\int^r_R(\rho_n-\bar{\rho})s^2  \, \mathrm{d} s\Big|^2 r^2  \, \mathrm{d} r
\lesssim \int^{ \infty}_R\frac{1}{r^2} \, \mathrm{d} r \lesssim \frac{1}{R}.
\end{align*}
Thus, for any given $ \epsilon>0$, there exists $R>0$ large enough such that
\begin{align}\label{2.30}
\left\|\nabla V \left(\chi_{B^c_{R}}\rho_n \right)-\nabla V \left(\chi_{B^c_{R}}\bar{\rho} \right) \right\|_{L^2_{rad}  \left(\mathbb{R}^3 \right)}<\epsilon,\ \forall\,  n.
\end{align}
By the triangle inequality, \eqref{2.20} and \eqref{2.30},
we get $\nabla V(\rho_n)\rightarrow \nabla V \left(\bar{\rho} \right)$ strongly in $L^2_{rad} \left(\mathbb{R}^3 \right)$ as $n \to \infty$.
\end{proof}
\begin{remark}
The results in this section can be generalized to higher dimensions by similar arguments.
\end{remark}

\section{ Expanding solutions for polytropes with $\gamma = \frac43$ }\label{se3v28}
In this section, we study the  polytrope for $\gamma=\frac{4}{3}$.

\subsection{The best constant of Hardy-Littlewood inequality}\label{subse3.1v31}
In this subsection, we will seek the best constant $C_{\min}$ satisfying the Hardy-Littlewood
type inequality
\begin{align}\label{2.9}
\iint_{\mathbb{R}^3\times \mathbb{R}^3}\frac{\rho(x)\rho(y)}{|x-y|}  \, \mathrm{d} x \mathrm{d} y\leq C_{\min}||\rho||_{L^1(\mathbb{R}^3)}^{\frac{2}{3}}||\rho||_{L^\frac{4}{3} \left(\mathbb{R}^3 \right)}^{\frac{4}{3}}.
\end{align}
We prove that the best constant of \eqref{2.9} is attained by the density of the Lane-Emden stars for $\gamma=\frac{4}{3}$ and it can be expressed explicitly by the  mass $M_{K}$ of the Lane-Emden stars.

Consider the closely related variational problem
\begin{align}\label{3.2}
J=\inf \left\{J(\rho): \rho\in L^\frac{4}{3} \left(\mathbb{R}^3 \right)\cap L^1 \left(\mathbb{R}^3 \right), \rho \ge 0, \rho \not\equiv 0\right\},
\end{align}
where
\begin{align*}
J(\rho)
=\frac{ \left(\int_{\mathbb{R}^3}\rho\,  \mathrm{d} x \right)^\frac{2}{3}\int_{\mathbb{R}^3}
\rho^\frac{4}{3} \, \mathrm{d} x}{\iint_{\mathbb{R}^3\times\mathbb{R}^3}\frac{\rho(x)\rho(y)}{|x-y|} \, \mathrm{d} x \mathrm{d} y}.
\end{align*}
\begin{theorem}\label{thm3.1}
There exists a non-negative, spherically symmetric and non-increasing function $\bar{\rho}\in L^\frac{4}{3} \left(\mathbb{R}^3 \right)\cap L^1 \left(\mathbb{R}^3 \right)$ such that $J=J \left(\bar{\rho} \right)$ and
\begin{align*}
 C_{\min}= J^{-1} = \frac{6K}{ M_{K}^\frac{2}{3}},
\end{align*}
where $M_{K}$ is the mass of the Lane-Emden stars for $\gamma=\frac{4}{3}$.
Moreover, if $\rho(x)$ is a minimizer of the variational problem \eqref{3.2}, there are some $R_* > 0$ and $x_* \in \mathbb{R}^3$ such that
\begin{align*}
\rho(x) =
\begin{cases}
\frac1{R_*^3 }    \rho_1 \left( \frac{x- x_*}{ R_*}  \right),  & \text{ if } x \in B_{R_*} \left(x_* \right), \\
0,                                                                     &  \text{ if } x \in  \left( B_{R_*} \left(x_* \right) \right)^c.
\end{cases}
\end{align*}
Here, $ \rho_1 $ is the density of the classical Lane-Emden star for $\gamma=\frac{4}{3}$ with the support radius $R_1<\infty$ and the center density $\rho_1(0)=1$.
\end{theorem}

\begin{proof}
We can see that $J$ is invariant under the following scaling:
$\varrho(x)= \lambda_1^\beta\rho \left(\lambda_2^\alpha x \right)$, $\lambda_1,\lambda_2>0$.
Indeed, by
\begin{align*}
&\Big(\int_{\mathbb{R}^3}\varrho(x) \,  \mathrm{d} x\Big)^\frac{2}{3}
=\Big(\int_{\mathbb{R}^3}\lambda_1^\beta \rho \left( \lambda_2^\alpha x \right)  \, \mathrm{d} x\Big)^\frac{2}{3}
=\lambda_1^{\frac{2}{3}\beta}\lambda_2^{-2\alpha}\Big(\int_{\mathbb{R}^3}\rho(x)  \, \mathrm{d} x\Big)^\frac{2}{3},\\
&\int_{\mathbb{R}^3}\varrho^\frac{4}{3} \, \mathrm{d} x
=\lambda_1^{\frac{4}{3}\beta}\lambda_2^{-3\alpha}\int_{\mathbb{R}^3}\rho^\frac{4}{3} \, \mathrm{d} x,\\
&\iint_{\mathbb{R}^3\times\mathbb{R}^3}\frac{\varrho(x)\varrho(y)}{|x-y|} \, \mathrm{d} x \mathrm{d} y
=\lambda_1^{2\beta}\lambda_2^{-5\alpha}\iint_{\mathbb{R}^3\times\mathbb{R}^3}\frac{\rho(x)\rho(y)}{|x-y|} \, \mathrm{d} x \mathrm{d} y,
\end{align*}
we have
\begin{align*}
J = \frac{ \left(\int_{\mathbb{R}^3}\rho  \, \mathrm{d} x \right)^\frac{2}{3}
\int_{\mathbb{R}^3} \rho^\frac{4}{3} \, \mathrm{d} x}{\iint_{\mathbb{R}^3\times\mathbb{R}^3}\frac{\rho(x)\rho(y)}{|x-y|}\,  \mathrm{d} x \mathrm{d} y}
=\frac{ \left(\int_{\mathbb{R}^3}\varrho  \, \mathrm{d} x \right)^\frac{2}{3}
\int_{\mathbb{R}^3}\varrho^\frac{4}{3} \, \mathrm{d} x}{\iint_{\mathbb{R}^3\times\mathbb{R}^3}\frac{\varrho(x)\varrho(y)}{|x-y|} \, \mathrm{d} x \mathrm{d} y}.
\end{align*}
It is obvious that $J>0$ from \eqref{2.9}.
From \cite{EM}, the symmetric-decreasing rearrangement $\rho^\sharp$ of $\rho$ satisfies
 \begin{itemize}
\item  $\rho^\sharp$ is non-negative.\\
 \item $\rho^\sharp$ is spherically symmetric and non-increasing, i.e. $\rho^\sharp(x)=\rho^\sharp(y)$ if $|x|=|y|$ and $\rho^\sharp(x)\geq \rho^\sharp(y)$ if $|x|\leq |y|$.\\
\item  $||\rho||_{L^p \left(\mathbb{R}^3 \right)}= \left\|\rho^\sharp \right\|_{L^p  \left(\mathbb{R}^3 \right)}$ for any $\rho\in L^p \left(\mathbb{R}^3 \right)$, where $1\leq p\leq \infty.$\\
\item \begin{align}
\iint_{\mathbb{R}^{3}\times\mathbb{R}^{3}}\frac{\rho(x)\rho(y)}{|x-y|}%
dxdy\leq\iint_{\mathbb{R}^{3}\times\mathbb{R}^{3}}\frac{\rho^{\sharp}%
(x)\rho^{\sharp}(y)}{|x-y|}dxdy.\label{rearrangement-Gra-energy}
\end{align}
\end{itemize}
Hence,
\begin{align*}
J& =\inf \left\{J(\rho): \rho\in L^\frac{4}{3} \left(\mathbb{R}^3 \right)\cap L^1 \left(\mathbb{R}^3 \right), \rho \ge 0, \rho \not\equiv 0\right\}\\
 & =\inf \left\{J(\rho): \rho\in L_{rad}^\frac{4}{3} \left(\mathbb{R}^3 \right)\cap L_{rad}^1 \left(\mathbb{R}^3 \right), \rho\geq 0,\rho\not\equiv 0\right\}.
\end{align*}
Let $\{\rho_n\}$ be the spherically symmetric minimizing sequence with $\rho \ge 0, \rho \not\equiv 0$ such that
\begin{align*}
J(\rho_n)\rightarrow J,\ \text{as}\ n\rightarrow\infty.
\end{align*}
Using the homogeneity and scaling invariance, we can assume that
$$||\rho_n||_{L^\frac{4}{3}_{rad}  \left(\mathbb{R}^3 \right )}=1
\text{ and }  ||\rho_n||_{L^1_{rad}  \left(\mathbb{R}^3 \right)}=1.$$
From Lemmas \ref{lem2.1} and \ref{lem2.3}, there exist $\bar{\rho}\geq 0~ (\not\equiv 0)$ and a subsequence of $\{\rho_n\}$ which is still denoted by itself such that
\begin{align*}
&\rho_n\rightharpoonup\bar{\rho} \ \text{in}\ L^\frac{4}{3}_{rad} \left(\mathbb{R}^3 \right), \quad
\left\|\bar{\rho} \right\|_{L^1_{rad} (\mathbb{R}^3)}\leq \liminf_{n\to\infty}  ||\rho_n||_{L^1_{rad} (\mathbb{R}^3)},
\intertext{ and }
&\nabla V(\rho_n)\rightarrow \nabla V \left(\bar{\rho} \right)\ \text{in}\ L^2_{rad}(\mathbb{R}^3).
\end{align*}
Therefore,
\begin{align*}
J(\bar{\rho})\leq \liminf_{n\to\infty}J(\rho_n)=J,
\end{align*}
which implies the minimum $J$ of \eqref{3.2} is obtained and obviously $(C_{\min})^{-1}=J $. We note that any minimizer of the variational problem \eqref{3.2} must be spherically symmetric with respect to some point in $\mathbb{R}^3$, and decreasing as a function of the radial variable. Indeed, let $\bar{\rho}$ be a minimizer of \eqref{3.2}, then by
(\ref{rearrangement-Gra-energy}) $J\left(  \bar{\rho}\right)  \geq J\left(
\bar{\rho}^{\sharp}\right)  $. Thus $J\left(  \bar{\rho}\right)  =J\left(
\bar{\rho}^{\sharp}\right)  $ and the equality in (\ref{rearrangement-Gra-energy})
holds true for $\bar{\rho}$. By Theorem 3.9 of \cite{EM}, $\bar{\rho}$ must be
spherically symmetric up to some translation.
%

Now, we derive the Euler-Lagrange equation for the minimizers of \eqref{3.2}. It suffices to consider a spherically symmetric minimizer $\bar{\rho}$. For $\epsilon>0$ small enough, define
\begin{align*}
\mathcal{S}_\epsilon:=\{x\in\mathbb{R}^3: \epsilon<\bar{\rho}<\frac{1}{\epsilon}\}.
\end{align*}
Take a test function $w\in L^\infty(\mathbb{R}^3)$ which has compact support and is non-negative on $\mathbb{R}^3\setminus \mathcal{S}_\epsilon$. For $\tau\geq 0$, define
\begin{align*}
\rho_\tau=\bar{\rho}+\tau w.
\end{align*}
Then, $\rho_\tau\geq 0$ for $\tau$ small enough. Since $\bar{\rho}$ is a minimizer of $J(\rho)$, we have
\begin{align*}
0\leq& J(\rho_\tau)-J(\bar{\rho})\\
=&\tau\bigg[\iint_{\mathbb{R}^3\times\mathbb{R}^3}\frac{\bar{\rho}(x)\bar{\rho}(y)}{|x-y|}  \, \mathrm{d} x \mathrm{d} y
\bigg(\frac{2}{3}\int_{\mathbb{R}^3}w \, \mathrm{d} x \Big(\int_{\mathbb{R}^3} \bar{\rho} \, \mathrm{d} x \Big)^{- \frac{1}{3} } \int_{\mathbb{R}^3}\bar{\rho}^\frac{4}{3}\,  \mathrm{d} x+\int_{\mathbb{R}^3} \frac{4}{3} \bar{\rho}^{ \frac{1}{3}} w \, \mathrm{d} x\Big(\int_{\mathbb{R}^3} \bar{\rho} \, \mathrm{d} x \Big)^{\frac{2}{3} }\bigg)\\
&-2 \left(\int_{\mathbb{R}^3}\bar{\rho} \, \mathrm{d} x \right)^\frac{2}{3}\int_{\mathbb{R}^3}\bar{\rho}^\frac{4}{3} \, \mathrm{d} x \iint_{\mathbb{R}^3\times\mathbb{R}^3}
\frac{w(x)\bar{\rho}(y)}{|x-y|}\,  \mathrm{d} x \mathrm{d} y\bigg]
\Big[\iint_{\mathbb{R}^3\times\mathbb{R}^3}\frac{\bar{\rho}(x)\bar{\rho}(y)}{|x-y|} \, \mathrm{d} x \mathrm{d} y\Big]^{-2}+o(\tau).
\end{align*}
Therefore, the  coefficient of $\tau$  must be non-negative. That is
\begin{align*}
\int_{\mathbb{R}^3}\left(A_1+A_2\bar{\rho}^\frac{1}{3}+A_3V(\bar{\rho})\right) w\, \mathrm{d} x\geq 0,
\end{align*}
where
\begin{align*}
&A_1=\frac{2}{3}\iint_{\mathbb{R}^3\times\mathbb{R}^3} \frac{\bar{\rho}(x)\bar{\rho}(y)}{|x-y|} \, \mathrm{d} x \mathrm{d} y
\Big(\int_{\mathbb{R}^3}\bar{\rho} \, \mathrm{d} x\Big)^{- \frac{1}{3}}
\int_{\mathbb{R}^3}\bar{\rho}^\frac{4}{3}  \, \mathrm{d} x,\\
&A_2=\frac{4}{3}\iint_{\mathbb{R}^3\times\mathbb{R}^3}\frac{\bar{\rho}(x)\bar{\rho}(y)}{|x-y|} \, \mathrm{d} x \mathrm{d} y\Big(\int_{\mathbb{R}^3}\bar{\rho} \, \mathrm{d} x\Big)^{\frac{2}{3}},\quad
A_3=2 \left(\int_{\mathbb{R}^3}\bar{\rho} \,  \mathrm{d} x \right)^\frac{2}{3}\int_{\mathbb{R}^3}\bar{\rho}^\frac{4}{3} \, \mathrm{d} x.
\end{align*}
Hence, $A_1+A_2\bar{\rho}^\frac{1}{3}+A_3V(\bar{\rho})=0$ in $\mathcal{S}_\epsilon$ and $A_1+A_2\bar{\rho}^\frac{1}{3}+A_3V(\bar{\rho})\geq 0$ in $\mathbb{R}^3\setminus\mathcal{S}_\epsilon$ for all $\epsilon>0$ small enough. Taking $\epsilon\rightarrow 0$, we get
\begin{align}\label{3.15}
A_1+A_2\bar{\rho}^\frac{1}{3}+A_3V(\bar{\rho})=0,\ \text{in}\ \Sigma_+,
\end{align}
where $\Sigma_+=\{x\in\mathbb{R}^3:\bar{\rho}(x) >0\}$.

After scaling $\tilde{\rho}(x)=\left( \frac{4KA_3}{A_2} \right)^\frac{3}{2}\bar{\rho}(x)$, \eqref{3.15} becomes
\begin{align}\label{tilde}
4K\tilde{\rho}^\frac{1}{3}+V(\tilde{\rho})=B_1,\ \text{in}\ \tilde{\Sigma}_+,
\end{align}
where $\tilde{\Sigma}_+= \{x\in\mathbb{R}^3:\tilde{\rho}(x) >0\}$ and $-B_1 = \frac{A_1}{A_3}\big(\frac{4KA_3}{A_2}\big)^\frac{3}{2}$.

 Since the terms $\tilde{\rho}^\frac{1}{3}$ and $V \left(\tilde{\rho} \right)$ in \eqref{tilde} belong to $L^4_{rad} \left(\mathbb{R}^3 \right)$, it follows that the support of $\tilde{\rho}$ is compact. Note that $\tilde{\rho}$ is spherically symmetric and non-increasing. There exists $0<R<\infty$ such that $\tilde{\rho}(|x|)=0 \left(x\in  B_R^c \right)$. From the regularity argument in \cite{GT}, we have $\tilde{\rho}^\frac{1}{3}\in C^2(B_R )$. Let $F(s)= \left(\Phi' \right)^{-1}(s) = \left( \frac{s}{4 K } \right)^3 $ for $s\in(0, \infty)$, where $\Phi$ is the enthalpy function. We extend $F(s)$ to $(-\infty,0)$ by zero extension. The extended function is denoted by $F_+(s):\mathbb{R}\rightarrow [0, \infty)$. Therefore, the potential $V \left(\tilde{\rho} \right)(r)=:V(r)$ satisfies
\begin{align*}
\Delta  V=V''+\frac{2}{r}V'=4\pi F_+(B_1-V).
\end{align*}
Let $y(r)=B_1-V(r)= 4K\tilde{\rho}^\frac{1}{3}=\Phi' \left(\tilde{\rho} \right).$
Thus, we obtain the following ODE:
\begin{align}\label{ODE}
\begin{cases}
y''+\frac{2}{r}y'=-4\pi F_+(y),\\
y(0)=\Phi' \left(\tilde{\rho}(0) \right) ,   \ y'(0)=0.
\end{cases}
\end{align}
This system is equivalent to
\begin{align*}
y'(r)=-\frac{4\pi}{r^2}\int^r_0s^2F_+(y(s)) \, \mathrm{d} s,\ y(0)=4K \left(\tilde{\rho}(0) \right)^\frac{1}{3}.
\end{align*}
From the classical ODE theory, this problem admits a unique solution $y(r)$.
We have $y(R)=4K\tilde{\rho}\left(  R\right)  ^{\frac{1}{3}}=0$ which implies $B_1=V(R)=-\frac{M_1}{R_1} \mu^\frac{1}{3}$, where $\mu=\tilde{\rho}(0)$. Then, $\tilde{\rho}$ is the solution of \eqref{1.11} for $\gamma=\frac{4}{3}$ with compact support $B_R$. Therefore, $\rho_1(|x|)=\mu^{-1}\tilde{\rho}\big(\mu^{-\frac{1}{3}}|x|\big)$ is the solution of \eqref{1.11} for $\gamma=\frac{4}{3}$ with the compact support $\left\{|x|\leq R_1\right\}$ and the center density $\rho_1(0)=1$.

In all, if $\rho$ is a minimizer of the variational problem $\eqref{3.2}$,
there must exist $x_* \in \mathbb{R}^3$ and $R_*>0$ such that $ {\rho}(x) =  \frac{1}{R_*^3}\rho_1  \left(\frac{x - x_*}{R_*} \right)$.

We now relate the minimum of the variational problem \eqref{3.2} to the mass of the Lane-Emden stars.
Multiplying the first equation in \eqref{1.11} with $x$ and integrating, we have
\begin{align*}
\int_{\mathbb{R}^3} x\cdot\nabla  \left(K\rho_\mu^\frac{4}{3} \right) \, \mathrm{d} x
+\int_{\mathbb{R}^3}\rho_\mu x\cdot\nabla V_\mu \,  \mathrm{d} x=0.
\end{align*}
There holds
\begin{align}\label{3.31}
3\int_{\mathbb{R}^3}  K\rho_\mu^\frac{4}{3} \, \mathrm{d} x  =\int_{\mathbb{R}^3}\rho_\mu x\cdot\nabla V_\mu  \, \mathrm{d} x.
\end{align}
On the other hand, note that
\begin{align*}
V_\mu(x)=-\int_{\mathbb{R}^3}\frac{\rho_\mu(y)}{|x-y|} \, \mathrm{d} y,\quad \partial_{x_i} V_\mu(x)
 =\int_{\mathbb{R}^3}\frac{\rho_\mu(y)(x_i-y_i)}{|x-y|} \, \mathrm{d} y.
\end{align*}
Then, by direct calculation, we have
\begin{align*}
\int_{\mathbb{R}^3}\rho_\mu x\cdot\nabla V_\mu \,  \mathrm{d} x
=&\sum_{i=1}^3\iint_{\mathbb{R}^3\times\mathbb{R}^3}\frac{\rho_\mu(x)\rho_\mu(y)(x_i-y_i)x_i}{|x-y|}  \, \mathrm{d} x \mathrm{d} y\\
=&\iint_{\mathbb{R}^3\times\mathbb{R}^3}\frac{\rho_\mu(x)\rho_\mu(y)}{|x-y|} \, \mathrm{d} x \mathrm{d} y
-\sum_{i=1}^3\iint_{\mathbb{R}^3\times\mathbb{R}^3}\frac{\rho_\mu(x)\rho_\mu(y)(x_i-y_i)y_i}{|x-y|}\,  \mathrm{d} x \mathrm{d} y\\
=&\iint_{\mathbb{R}^3\times\mathbb{R}^3}\frac{\rho_\mu(x)\rho_\mu(y)}{|x-y|} \, \mathrm{d} x \mathrm{d} y
-\int_{\mathbb{R}^3}\rho_\mu x\cdot\nabla V_\mu  \, \mathrm{d} x.
\end{align*}
Combing with \eqref{3.31}, we obtain
\begin{align}\label{Q}
3K\int_{\mathbb{R}^3}\rho_\mu^\frac{4}{3} \, \mathrm{d} x
=\frac{1}{2}\iint_{\mathbb{R}^3\times\mathbb{R}^3}\frac{\rho_\mu(x)\rho_\mu(y)}{|x-y|} \, \mathrm{d} x \mathrm{d} y.
\end{align}
Hence,
\begin{align}\label{eq3.6v51}
J=J(\rho_\mu)= \frac{  \left(\int_{\mathbb{R}^3} \rho_\mu \,\mathrm{d}x  \right)^\frac23}{6K} .
\end{align}
For $\gamma=\frac{4}{3}$, the mass of $\rho_\mu$ is independent of $\mu$ which is denoted by $M_{K}$ (see Lemma 3.6 in \cite{ZC2020}). Thus, together with \eqref{eq3.6v51}, we have
\begin{align*}
J = \frac{ M_{K}^\frac{2}{3}}{6K}.
\end{align*}
\end{proof}
In the following, we compare $M_{K}$ with different critical masses used in  \cite{GLD,YTTZ,CS}.

First, we compare $M_{K}$ with the critical mass $M_0$ in \cite{CS}. For the Lane-Emden equation \eqref{ODE}, let
\begin{align*}
y(r)=\alpha\theta (\alpha \beta r)=\alpha\theta(s),\ s=\alpha\beta r,
\end{align*}
where $\alpha=4K\mu^\frac{1}{3}, \beta=\sqrt{\frac{4\pi}{(4K)^3}}$. Then, there holds
\begin{align*}
\theta''+\frac{2}{s}\theta'+ \theta_+^3=0,\ \theta(0)=1,\theta'(0)=0.
\end{align*}
which coincides with (3.1)-(3.3) for $\lambda=0$ in \cite{CS}.

Therefore, we have
\begin{align*}
M_{K}=\int_{\mathbb{R}^3}\rho_\mu   \, \mathrm{d} x
=\int_{\mathbb{R}^3}\Big(\frac{y(|x|)}{4K}\Big)^3 \, \mathrm{d} x
=\Big(\frac{K}{\pi}\Big)^\frac{3}{2}\int_{\mathbb{R}^3}\theta^3(|x|) \, \mathrm{d} x =  M_0.
\end{align*}
In Theorem 1.5 of \cite{YTTZ},
the authors proved that the star cannot collapse to a point when the mass is less than a ``critical'' mass
\begin{align}\label{eq3.9v63}
M_c  = \left(\frac{3K}{2\pi} \right)^\frac{3}{2} \left(\mathcal{M}_\frac{4}{3} \right)^{-2},
\end{align}
where $\mathcal{M}_\frac{4}{3}$ is the Marcinkiewicz interpolation constant of the following Marcinkiewicz interpolation inequality
\begin{align*}
\left( \int_{\mathbb{R}^3} \left|\hat{f} \right|^\frac{4}{3}|\xi|^{-2}\,  \mathrm{d} \xi \right)^\frac34
\leq \mathcal{M}_\frac{4}{3} \left( \int_{\mathbb{R}^3} |f|^\frac{4}{3} \, \mathrm{d} x \right)^\frac34.
\end{align*}
We have $M_c < M_{K}$. Indeed, in the proof of Theorem 1.5 in \cite{YTTZ}, they used the following estimate
\begin{align*}
&\iint_{\mathbb{R}^3\times\mathbb{R}^3}\frac{\rho(x)\rho(y)}{|x-y|} \,  \mathrm{d} x \mathrm{d} y
=4\pi \int_{\mathbb{R}^3}\frac{|\hat{\rho} (\xi) |^2}{|\xi|^2} \,  \mathrm{d} \xi \\
& \leq 4\pi  \left(\max_\xi \left| \hat{\rho} (\xi) \right|  \right)^\frac{2}{3}
\int_{\mathbb{R}^3}\frac{|\hat{\rho}|^\frac{4}{3}}{|\xi|^2} \, \mathrm{d} \xi
\leq 4\pi \mathcal{M}_\frac{4}{3}^\frac{4}{3}
\left(\int_{\mathbb{R}^3}\rho  \, \mathrm{d} x \right)^\frac{2}{3}\int_{\mathbb{R}^3}\rho^\frac{4}{3} \,  \mathrm{d} x.
\end{align*}
It is obvious that $4\pi\mathcal{M}^\frac{4}{3}_\frac{4}{3}$ is not optimal. Thus,
$4\pi\mathcal{M}_\frac{4}{3}^\frac{4}{3}>C_{\min}$ which implies $M_{K}>M_c$.

In \cite{GLD}, the authors proved the global existence of the spherically symmetric  weak solution of \eqref{1.1}-\eqref{eq1.5v34} for $\gamma = \frac43$ when the mass is less than a ``critical" mass $M_c \left( \frac43 \right)= \left(\frac{8\pi}{9K} \right)^{-\frac{3}{2}} \omega_4$, where $ \omega_4=\frac{2\pi^2}{\Gamma(2)}$.
Let's compare $M_{K}$ with $M_c \left(\frac43 \right)$.
In \cite{GLD}, they used the following inequality to obtain $M_c \left(\frac43 \right)$:
\begin{align*}
\iint_{\mathbb{R}^3\times \mathbb{R}^3}\frac{\rho(x)\rho(y)}{|x-y|}\,  \mathrm{d} x \mathrm{d} y
=\frac{1}{4\pi} \|\nabla V \|^2_{L^2 \left(\mathbb{R}^3 \right)}
&\leq \frac{16\pi \omega_4^{-\frac{2}{3}}}{3} ||\rho||^2_{L^\frac{6}{5} \left(\mathbb{R}^3 \right)}
\leq \frac{16\pi \omega_4^{-\frac{2}{3}}}{3}  ||\rho||_{L^1 \left(\mathbb{R}^3 \right)}^\frac{2}{3}||\rho||_{L^\frac{4}{3} \left(\mathbb{R}^3 \right)}^{\frac{4}{3}}.
\end{align*}
It is obvious that $\rho_\mu$ and $V(\rho_\mu)$ cannot make H\"{o}lder's inequality  to be an equality.
Hence, $C_{\min}<\frac{16\pi \omega_4^{-\frac{2}{3}}}{3}$ which yields $M_{K}> M_c \left(\frac{4}{3} \right)$.

We sum up the above discussion  in the following proposition.
\begin{proposition}\label{pro3.1}

\begin{enumerate}
\item  $M_{K}=M_0$ which is the critical mass in \cite{CS}.
\item  $M_{K}>M_c$ where $M_c = \left(\frac{3K}{2\pi} \right)^\frac{3}{2} \left(\mathcal{M}_\frac{4}{3} \right)^{-2}$
is the ``critical" mass in \cite{YTTZ}.
\item  $M_{K}> M_c \left(\frac43  \right)$ where $M_c \left(\frac43 \right)= \left(\frac{8\pi}{9K} \right)^{-\frac{2}{3}} \omega_4$ is the ``critical" mass in \cite{GLD} for $n=3$ and $\gamma=\frac{4}{3}$.
\end{enumerate}
\end{proposition}

\subsection{Existence of spherically symmetric global weak solutions when $\gamma = \frac43$}\label{subse4.1v28}
In this subsection, we show the global existence of a spherically symmetric finite energy  weak solution of the compressible Euler-Poisson equations when $\gamma = \frac43$. In the following, we first present the general framework in \cite{GLD} to show the existence of a global weak solution of Euler-Poisson equations when $\gamma \in \big( \frac65, \frac43 \big]$.
\subsubsection{Spherically symmetric finite energy weak solutions}\label{subsubse3.2.1v60}
Consider the Euler-Poisson equations
\begin{align}\label{4.1}
&\partial_t\rho+\mathrm{div}\mathcal{M}=0,\\
&\partial_t\mathcal{M}+\mathrm{div}\Big(\frac{\mathcal{M}\otimes\mathcal{M}}{\rho}\Big)+\nabla P+\rho\nabla V=0,\\
&\Delta  V=4\pi\rho,\\
&\lim_{|x|\rightarrow \infty}V(t, x)=0, \label{4.4}\\
& (\rho,\mathcal{M})|_{t=0}= \left(\rho_0,\mathcal{M}_0 \right), \label{eq4.5v47}
\end{align}
for $t>0$, $x\in\mathbb{R}^3$. Here, $P(\rho)=K\rho^\gamma$ is the pressure, $\mathcal{M} = \rho u  \in\mathbb{R}^3$ represents the momentum.
 Assume that the initial data has finite kinetic  energy and internal energy
\begin{align}\label{4.6}
E_0:=\int_{\mathbb{R}^3}\left(\frac{1}{2}  \left|\frac{\mathcal{M}_0}{\sqrt{\rho_0}} \right|^2+\frac{K\rho_0^\gamma}{\gamma-1}\right) \, \mathrm{d} x <\infty,
\end{align}
and finite mass
\begin{align}\label{4.7}
M(\rho_0)=\int_{\mathbb{R}^3}\rho_0 \, \mathrm{d} x <\infty.
\end{align}
\begin{definition}[Finite energy weak solution]\label{def4.7}
A measurable vector function $(\rho,\mathcal{M},V)$ is said to be a  spherically symmetric finite energy weak solution of the Cauchy problem \eqref{4.1}-\eqref{eq4.5v47} with  spherically symmetric initial data $\left(\rho_0(|x|),\mathcal{M}_0(|x|) \right)$ if the following conditions hold:
\begin{enumerate}
\item $\rho(t,|x|)\geq 0$ a.e., and $ \left(\mathcal{M},\frac{\mathcal{M}}{\sqrt{\rho}} \right)(t,|x|)=0$ a.e. on the vacuum states
$\{(t,x): \rho(t,|x|)=0\}$.
\item  For a.e. $t>0$, the energy is finite:
\begin{align*}
&\int_{\mathbb{R}^3}\left(\frac{1}{2} \left|\frac{\mathcal{M}}{\sqrt{\rho}} \right|^2
+\frac{K\rho^\gamma}{\gamma-1}+\frac{1}{ 8 \pi }|\nabla V|^2 \right)\,  \mathrm{d} x\leq C(E_0,M(\rho_0)),\\
&\int_{\mathbb{R}^3}\left(\frac{1}{2} \left|\frac{\mathcal{M}}{\sqrt{\rho}} \right|^2
+\frac{K\rho^\gamma}{\gamma-1}-\frac{1}{8 \pi }|\nabla V|^2\right) \, \mathrm{d} x
\leq \int_{\mathbb{R}^3}\left(\frac{1}{2 } \left|\frac{\mathcal{M}_0}{\sqrt{\rho_0}} \right|^2
+\frac{K\rho_0^\gamma}{\gamma-1}-\frac{1}{ 8 \pi }|\nabla V_0|^2\right) \, \mathrm{d} x.
\end{align*}
\item  For any $\phi(t,x)\in C_0^1(\mathbb{R}_+\times\mathbb{R}^3)$,
\begin{align*}
\int_{\mathbb{R}_+\times\mathbb{R}^3} \left(\rho\phi_t+\mathcal{M}\cdot\nabla\phi \right) \, \mathrm{d} x \mathrm{d}  t
+\int_{\mathbb{R}^3}\rho_0\phi(0,x) \, \mathrm{d} x=0.
\end{align*}
\item For any $\psi(t,x)\in \Big(C_0^1(\mathbb{R}_+ \times\mathbb{R}^3)\Big)^3$,
\begin{align*}
&\int_{\mathbb{R}_+ \times\mathbb{R}^3}\left(\mathcal{M}\cdot\partial_t\psi
+\frac{\mathcal{M}}{\sqrt{\rho}}\cdot \left(\frac{\mathcal{M}}{\sqrt{\rho}}\cdot\nabla \right)\psi+P(\rho)\mathrm{div}\psi\right) \, \mathrm{d} x \mathrm{d} t
+\int_{\mathbb{R}^3}\mathcal{M}_0\psi(0,x)\, \mathrm{d} x
\\&=\int_{\mathbb{R}_+\times\mathbb{R}^3}\rho\nabla V\cdot \psi  \, \mathrm{d} x \mathrm{d} t.
\end{align*}
\item  For $\varphi(x)\in C_0^1(\mathbb{R}^3)$,
\begin{align*}
\int_{\mathbb{R}^3}\nabla V\cdot\nabla\varphi  \, \mathrm{d} x
=-4\pi\int_{\mathbb{R}^3}\rho \varphi  \, \mathrm{d} x,\ \textrm{a.e.}\ t\geq 0.
\end{align*}
\end{enumerate}
\end{definition}
To establish the global existence of a finite energy weak solution,  adopting similar ideas of \cite{GLD}, we will use approximate solutions of the  compressible Navier-Stokes-Poisson equations to get the weak solutions of the compressible Euler-Poisson equations \eqref{4.1}-\eqref{4.4} by the vanishing viscosity limit argument.
To do this, we need the uniform estimates  and also the $H_{loc}^{-1}$ compactness of the entropy dissipation measures of the solutions to the following  compressible Navier-Stokes-Poisson equations:
\begin{align}
&\partial_t\rho+\text{div}\mathcal{M}=0,\label{4.14}\\
&\partial_t\mathcal{M}+\text{div} \Big(\frac{\mathcal{M}\otimes\mathcal{M}}{\rho} \Big)+\nabla P+\rho\nabla V
=\epsilon\text{div}\Big( \rho D \Big(\frac{\mathcal{M}}{\rho} \Big)\Big), \\
&\Delta  V=4\pi\rho,\label{4.16}\\
& \lim\limits_{|x| \to \infty} V(t,x) = 0, \label{eq3.20v74}
\end{align}
where $D(\frac{\mathcal{M}}{\rho})=\frac{1}{2}\left(\nabla \left(\frac{\mathcal{M}}{\rho} \right)
+ \left( \nabla \left(\frac{\mathcal{M}}{\rho} \right) \right)^T \right)$ is the stress tensor, and $P( \rho)   = K \rho^\gamma$.

To obtain the global finite energy weak solutions of  the compressible Navier-Stokes-Poisson equations \eqref{4.14}-\eqref{eq3.20v74},
for any $T> 0$, we will consider the following  free boundary problem of \eqref{4.14}-\eqref{eq3.20v74}:
\begin{align}\label{4.18}
&\partial_t\rho+\partial_r(\rho u)+\frac{2}{r}\rho u=0,\\
&\partial_t(\rho u)+\frac{\partial_r \big(r^2\rho u^2\big)}{r^2} + \partial_r P(\rho)
+\frac{4\pi \rho}{r^2}\int^r_a\rho(t,z)z^2 \, \mathrm{d} z
=\epsilon\partial_r\Big(\rho \Big(\partial_r u+\frac{2}{r}u \Big)\Big)-\epsilon\frac{2}{r} u\partial_r\rho,\label{4.19}
\end{align}
for $(t, r) \in \Omega_T$ with
\begin{align*}
\Omega_T = \left\{ (t,r): a \le r \le b(t), 0 \le t \le T\right\}.
\end{align*}
Here, $b(t)$ is the free boundary which is determined by
\begin{align*}
\begin{cases}
b'(t)=u(t,b(t)),\ 0\leq t\leq T,\\
b(0)=b,
\end{cases}
\end{align*}
and we  take $a=\frac{1}{b}$ with $b>0$ large enough. On the free boundary, we impose the stress free boundary condition:
\begin{align}\label{eq3.20v58}
\left(P-\epsilon\rho \left(\partial_ru+\frac{2}{r}u \right)\right)(t,b(t))=0, \text{ for } t \in (0, T]  .
\end{align}
On the fixed inner boundary $r=a$, the Dirichlet boundary condition is chosen:
\begin{align}\label{4.23}
u(t,a)=0, \text{ for } t \in (0, T] .
\end{align}
Once the   approximate initial data $ \left(\rho_0^{\epsilon,b},u_0^{\epsilon,b} \right)$ satisfies
\begin{align*}
0<C_{\epsilon,b}^{-1}\leq \rho_0^{\epsilon,b}(r)\leq C_{\epsilon,b}<\infty,
\end{align*}
the existence of global  approximate smooth solutions $ \left(\rho^{\epsilon,b},u^{\epsilon,b} \right)$ of \eqref{4.18}-\eqref{4.23}
 with $0<\rho^{\epsilon,b}(t,r)<\infty$ is obtained.
 We can extend $\rho^{\epsilon,b}$ to be zero outside $\Omega_t= \left\{x\in\mathbb{R}^3 : a\leq|x|\leq b(t) \right\}$.
 Define the potential function $V$ to be the solution of the following Poisson equation:
\begin{align*}
\Delta  V=4\pi\rho^{\epsilon,b},\ \lim_{|x|\rightarrow\infty} V(t,x) =0.
\end{align*}
 We can see
\begin{align*}
V_{r}(t,r)=\left\{
             \begin{array}{ll}
               0, & 0\leq r< a, \\
               \\
               \frac{4\pi}{r^2}\int^r_0\rho^{\epsilon,b}(t,s)  s^2 \, \mathrm{d} s, & a\leq r\leq b(t), \\
               \\
               \frac{M(\rho_0)}{r^2}, & r>b(t).
             \end{array}
           \right.
\end{align*}
Indeed, the global existence of smooth solutions of the  problem \eqref{4.18}-\eqref{4.23} can be obtained by using the argument in Section 3 of \cite{DL}. An important step in the proof is to obtain the boundedness of $L^\gamma(\mathbb{R}^3)$-norm (the internal energy).
Then, we can obtain a global finite energy weak solution of problem \eqref{4.1}-\eqref{4.4} by taking $b\rightarrow \infty$ and $\epsilon\rightarrow 0$ by similar arguments in Sections 4  and 5 of \cite{GLD}.

\subsubsection{Existence of spherically symmetric global weak solutions when $\gamma = \frac43$}
In this subsection, we show the global existence of a spherically symmetric finite energy weak solution of the compressible Euler-Poisson equations when $\gamma = \frac43$ by the framework in \cite{GLD}.
We present the key steps to recover the estimates in \cite{GLD}. Define the following important quantities:
\begin{align*}
M(\rho^{\epsilon,b})&=\int_{a<|x|<  b(t)}\rho^{\epsilon,b}(x)  \, \mathrm{d} x
=4\pi\int^{b(t)}_a\rho^{\epsilon,b}(r)r^2 \, \mathrm{d} r,\\
E(\rho^{\epsilon,b},u^{\epsilon,b})
&=\frac{1}{2}\int_{a<|x|<b(t)}\rho^{\epsilon,b}|u^{\epsilon,b}|^2 \, \mathrm{d} x
+3\int_{a<|x|<b(t)} K(\rho^{\epsilon,b})^\frac{4}{3} \,  \mathrm{d} x
-\frac{1}{8\pi} \int_{ \mathbb{R}^3}  
 \left|\nabla V(\rho^{\epsilon,b}) \right|^2 \,  \mathrm{d} x\\
&=2\pi\int_a^{b(t)}\rho^{\epsilon,b}|u^{\epsilon,b}|^2r^2 \, \mathrm{d} r+12\pi\int_{a}^{b(t)} K(\rho^{\epsilon,b})^\frac{4}{3}r^2 \, \mathrm{d} r
-\frac{1}{2}\int_0^{ \infty} \left|\partial_r V(\rho^{\epsilon,b}) \right|^2r^2 \, \mathrm{d} r\\
&=2\pi\int_a^{b(t)}\rho^{\epsilon,b}|u^{\epsilon,b}|^2r^2  \, \mathrm{d} r
+12\pi\int_{a}^{b(t)} K(\rho^{\epsilon,b})^\frac{4}{3}r^2 \, \mathrm{d} r
-8\pi^2\int_0^{ \infty}\frac{1}{r^2} \left(\int^r_a\rho^{\epsilon,b} z^2  \, \mathrm{d} z \right)^2 \, \mathrm{d} r,
\end{align*}
where $\rho^{\epsilon,b}(t,r)$ is understood to be 0 for $r \in [0, a] \cup (b(t), \infty)$.

By Lemma A.10 in \cite{GLD}, the approximate initial data $(\rho_0^{\epsilon,b},u_0^{\epsilon,b})$ satisfies
\begin{align*}
M \big(\rho_0^{\epsilon,b} \big)= M(\rho_0),
\end{align*}
and
\begin{align*}
&2\pi\int_a^{b(t)}\rho_0^{\epsilon,b} \left|u_0^{\epsilon,b} \right|^2r^2 \, \mathrm{d} r
+12\pi\int_{a}^{b(t)}K \big(\rho_0^{\epsilon,b} \big)^\frac{4}{3}r^2 \, \mathrm{d} r
\rightarrow2\pi\int_0^{ \infty}\rho_0|u_0|^2r^2\, \mathrm{d} r
+12\pi\int_0^{ \infty}K\rho_0^\frac{4}{3}r^2 \, \mathrm{d} r,\\
&\frac{1}{2}\int_0^{\infty} \left|\partial_rV^{\epsilon,b}_0 \right|^2r^2 \, \mathrm{d} r
\rightarrow \frac{1}{2}\int_0^{ \infty} \left|\partial_rV_0  \right|^2r^2\,  \mathrm{d} r,
\end{align*}
by first taking $b\rightarrow\infty$ and then $\epsilon\rightarrow 0$. Here $\Delta  V_0=4\pi\rho_0$ and $\Delta V^{\epsilon,b}_0=4\pi\rho^{\epsilon,b}_0$.

Thanks to  $\rho^{\epsilon,b}=0$ for $r\in[0,a)\cup(b(t),\infty)$, by \eqref{2.9}, we have
\begin{align*}
E \left(\rho^{\epsilon,b}, u^{\epsilon,b} \right)
\geq  2\pi\int_0^{ \infty}\rho^{\epsilon,b} \left|u^{\epsilon,b} \right|^2r^2 \, \mathrm{d} r
+ \left(3K-\frac{1}{2}C_{min}\left(M \left(\rho_0^{\epsilon,b} \right)\right)^\frac{2}{3}\right)
\int_{\mathbb{R}^3} \left(\rho^{\epsilon,b} \right)^\frac{4}{3} \, \mathrm{d} x.
\end{align*}
If the original initial data $(\rho_0,u_0)$ satisfies $M(\rho_0)< M_{K}$,
then the approximate initial data $ \left(\rho_0^{\epsilon,b},u_0^{\epsilon,b} \right)$  satisfies $M \big(\rho_0^{\epsilon,b} \big) = M(\rho_0)<M_{K}$. It implies that  $||\rho^{\epsilon,b}||_{L^\frac{4}{3}(\mathbb{R}^3)}\leq C(M(\rho_0),E_0)$.

Furthermore, by \eqref{2.9} and $M(\rho_0)<M_{K}$, we also have
$$\iint_{\mathbb{R}^3\times\mathbb{R}^3}\frac{\rho^{\epsilon,b}(x)\rho^{\epsilon,b}(y)}{|x-y|}\, \mathrm{d} x \mathrm{d} y\leq C (M(\rho_0),E_0).$$
The rest of the estimates are very similar to \cite{GLD}, and we skip. Hence, the global finite energy weak solution of  the Euler-Poisson equations is obtained by first taking $b\rightarrow\infty$ and then taking the vanishing viscosity limit $\epsilon\rightarrow0$ by similar arguments in \cite{GLD}.

Moreover, one can solve the Euler-Poisson equations  for $t<0$ when the spherically symmetric initial data $(\rho_0,\frac{\mathcal{M}_0}{\sqrt{\rho_0}})$ satisfies \eqref{4.6}-\eqref{4.7} and the total mass  $M(\rho_0)$ satisfies $M(\rho_0) <M_{K}$. Indeed, there exists the global finite energy weak solution $(\tilde{\rho}(t,x),\tilde{u}(t,x))=\left(\tilde{\rho},\frac{\widetilde{\mathcal{M}}}{\sqrt{\tilde{\rho}}} \right)$  of  \eqref{1.1}-\eqref{eq1.5v34} for $t>0$ with the initial spherically symmetric data $(\rho_0,-\frac{\mathcal{M}_0}{\sqrt{\rho_0}})$. Then, $(\rho(t,x),u(t,x))=(\tilde{\rho}(-t,x),-\tilde{u}(-t,x))$ is a global finite energy weak solution of  \eqref{1.1}-\eqref{eq1.5v34} for $t<0$ with the initial data $(\rho_0,\frac{\mathcal{M}_0}{\sqrt{\rho_0}})$.

In conclusion, the main result in this part is
\begin{theorem}
When $\gamma=\frac{4}{3}$,
assume that the initial spherically symmetric data $(\rho_0,\mathcal{M}_0,V_0)$ satisfies \eqref{4.6} and $M(\rho_0) <M_{K}$.
Then, there exists a global finite energy weak solution $(\rho,\mathcal{M},V)$ with spherical symmetry for both $t>0$ and $t<0$.

\end{theorem}

\subsection{Expanding strong solutions}
 Assume that the spherically symmetric global strong solution of  \eqref{1.5}-\eqref{1.8} exists  when the mass is less than $M_{K}$.
  By the law of energy conservation and \eqref{2.9}, we have
\begin{align}\label{eq4.13v34}
E(\rho_0,u_0)=E(\rho,u)
\geq \left(3K-\frac{1}{2}C_{\min}(M(\rho_0))^\frac{2}{3}\right)\int_{\mathbb{R}^3}\rho^\frac{4}{3} \, \mathrm{d} x>0,
\end{align}
where we used the optimal Hardy-Littlewood inequality in Theorem \ref{thm3.1}.

Then, from
\begin{align*}
M(\rho_0)=\int_{\Omega(t)}\rho\, \mathrm{d}x\leq |\Omega(t)|^\frac{1}{4}\left(\int_{\Omega(t)}\rho^\frac{4}{3}\, \mathrm{d}x\right)^\frac{3}{4}\leq |\Omega(t)|^\frac{1}{4}\left(\frac{E(\rho_0,u_0)}{3K-\frac{1}{2}C_{\min}(M(\rho_0))^\frac{2}{3}}\right)^\frac{3}{4},
\end{align*}
we have
\begin{align*}
\frac{4\pi}{3}R^3(t)\geq |\Omega(t)|\geq M(\rho_0)^4\left(\frac{3K-\frac{1}{2}C_{\min}(M(\rho_0))^\frac{2}{3}}{E(\rho_0,u_0)}\right)^3>0.
\end{align*}
It shows that the measure of the support is bounded below.

Moreover, let
\begin{align*}
H(t)=\frac{1}{2}\int_{\mathbb{R}^3}\rho|x|^2 \, \mathrm{d} x.
\end{align*}
By direct calculation and \eqref{eq4.13v34}, we have
\begin{align*}
H'(t)&=\int_{\mathbb{R}^3}\rho u\cdot x \, \mathrm{d} x, \\
H''(t)&= E(\rho,u)+\frac{1}{2}\int_{\mathbb{R}^3}\rho|u|^2 \, \mathrm{d} x \geq E(\rho_0,u_0)>0.
\end{align*}
Since $R(t)$ is the moving interface of fluids and vacuum states, we get
\begin{align*}
H(t)\leq \frac{1}{2}M(\rho_0)R^2(t).
\end{align*}
Therefore, by the fundamental theorem of calculus, we have
\begin{align*}
\frac{1}{2} M(\rho_0)R^2(t)\geq \frac{1}{2}E(\rho_0,u_0) t^2+H'(0)t+H(0),
\end{align*}
which implies \eqref{1.14} for both $t\in [0,+\infty)$ and $t\in(-\infty,0]$.

\begin{remark}
In \cite{YTTZ,MP}, it was shown that initial data with positive energy leads to expanding solutions assuming the global existence of strong solutions. However, the results in \cite{CS} shows that the positive energy does not even imply the global existence of weak solutions. Indeed, as a counterexample, there exists self-simialr collapsing solutions with positive energy (\cite{CS}). However, by Theorem \ref{th1.1},  the condition that the total mass is less than $M_{K}$ not only implies the global existence of weak solution, but also shows that the solution cannot collapse (i.e. measure of the support is bounded below) and the  diameter of support tends to infinity as time goes to both positive and negative infinity. Moreover, the critical mass $M_{K}$ is sharp for expanding solutions in the sense that there exists self-similar collapsing solutions (\cite{CS} \cite{M2}) with mass greater than or equal to $M_{K}$.
\end{remark}

\section{ Expanding solutions for polytropes with $\gamma \in \left( \frac65, \frac43 \right) $ }
\label{se4v28}
In this section, we study the polytrope for $\gamma\in \left(\frac{6}{5},\frac{4}{3} \right)$.

\subsection{A constrained variational problem}\label{subse3.2v31}

We consider the following constrained variational problem:
\begin{align}\label{3.61}
l_\mu:=\inf\limits_{ \rho \in \mathcal{K}} S_\mu ( \rho),
\end{align}
where $\mathcal{K}$ is defined by \eqref{K}.

For $\rho \in \mathcal{K}$, we have
\begin{align}\label{3.62}
S_\mu (\rho)
=&\frac{4-3\gamma}{\gamma-1}\int_{\mathbb{R}^3}K\rho^\gamma \,  \mathrm{d} x
- V_\mu(R_\mu )  \int_{\mathbb{R}^3} \rho \, \mathrm{d} x\geq 0.
\end{align}
Therefore,
\begin{align*}
l_\mu  =\inf_{\rho\in\mathcal{K}}  S_\mu (\rho)  \geq 0.
\end{align*}

Let $\rho_\lambda(x)=\lambda^3\rho(\lambda x)$, $\lambda>0$. Then,
\begin{align*}
&M(\rho_\lambda) =M(\rho),
\intertext{ and }
&Q(\rho_\lambda)=\lambda^{3\gamma-3} \int_{\mathbb{R}^3}3K\rho^\gamma \, \mathrm{d} x
-\frac{\lambda}{2}\iint_{\mathbb{R}^3\times\mathbb{R}^3}\frac{\rho(x)\rho(y)}{|x-y|} \, \mathrm{d} x \mathrm{d} y.
\end{align*}
For any $\rho\not\equiv  0$, there exists a unique
\begin{align}\label{3.56}
\lambda^*(\rho)
= \left( \frac{6\int_{\mathbb{R}^3}K\rho^\gamma  \, \mathrm{d} x}
{\iint_{\mathbb{R}^3\times\mathbb{R}^3}\frac{\rho(x)\rho(y)}{|x-y|} \,\mathrm{d} x \mathrm{d} y} \right)^\frac1{4-3\gamma}
\end{align}
such that $\rho_{\lambda^*(\rho)}\in \mathcal{K}$. We also have $\lambda^*(\rho)<1$ if and only if $Q(\rho)<0$,   $\lambda^*(\rho)=1$ if and only if $Q(\rho)=0$.
Direct calculation yields for any $\lambda > 0$,
\begin{align}
&S_{\mu,\lambda}=S_\mu(\rho_\lambda)
=\lambda^{3\gamma-3}\int_{\mathbb{R}^3}\frac{K}{\gamma-1}\rho^\gamma  \,  \mathrm{d} x
-\frac{\lambda}{2}\iint_{\mathbb{R}^3\times\mathbb{R}^3}\frac{\rho(x)\rho(y)}{|x-y|} \,  \mathrm{d} x \mathrm{d} y
- V_\mu(R_\mu)\int_{\mathbb{R}^3}\rho \,  \mathrm{d} x,  \notag \\
&\frac{ \mathrm{d} S_{\mu,\lambda}}{ \mathrm{d} \lambda}
= \frac1\lambda Q( \rho_\lambda) , \label{3.58}
\intertext{ and }
&\frac{ \mathrm{d}^2S_{\mu,\lambda}}{ \mathrm{d} \lambda^2}
=3(3\gamma-4)\lambda^{3\gamma-5}\int_{\mathbb{R}^3}K\rho^\gamma \, \mathrm{d} x. \notag
\end{align}

Let $\rho_{\mu}$ be the density of the Lane-Emden star with the center density
$\mu$. Then $S_{\mu}^{\prime}\left(  \rho_{\mu}\right)  =0$. Choosing
$\rho=\rho_{\mu}$ and $\lambda=1$ in \eqref{3.58}, we have $Q\left(  \rho_{\mu}\right)
=0$ which is a Pohozaev type identity.

We summarize some facts to be used later.
\begin{lemma}\label{lem3.2}
For $\rho\in L^\gamma(\mathbb{R}^3)$, $\rho\geq 0,$ $\rho \not\equiv 0$, we have
\begin{enumerate}
\item[(i)] There exists $\lambda^*(\rho) > 0 $ such that $\rho_{\lambda^*(\rho)}\in \mathcal{K}$.
\item[(ii)] $S_{\mu,\lambda}$ is concave with respect to $\lambda\in (0,  \infty)$.
\item[(iii)]  $\lambda^* ( \rho) <1\Leftrightarrow Q(\rho)<0$.
\item[(iv)]   $\lambda^* ( \rho) =1 \Leftrightarrow Q(\rho)=0$.
\item[(v)]   $S_\mu (\rho_\lambda)<S_\mu  \left(\rho_{\lambda^*} \right)$ for any $\lambda>0,\lambda\neq \lambda^*$.
\item[(vi)]  If $\rho_n\rightharpoonup\bar{\rho}$ weakly in $L^\gamma(\mathbb{R}^3)$,
 we have $\rho_{n,\lambda} (x) :=\lambda^3\rho_n(\lambda x)\rightharpoonup\bar{\rho}_\lambda(x) $ weakly in $L^\gamma(\mathbb{R}^3)$ as $n\rightarrow\infty$.
\item[(vii)]  $ \left(\rho_\lambda \right)^\sharp =  \left( \rho^\sharp \right)_\lambda $ for any $\lambda > 0$, where $ (\cdot)^\sharp$ is the Schwartz symmetrization.
\end{enumerate}
\end{lemma}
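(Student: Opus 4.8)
The plan is to read off all seven assertions directly from the explicit scaling identities for $M(\rho_\lambda)$, $Q(\rho_\lambda)$, $S_{\mu,\lambda}$ and its two $\lambda$-derivatives displayed just above, together with \eqref{2.8}; no idea beyond careful bookkeeping is required, and the one recurring point is that $3\gamma-4<0$ throughout the range $\gamma\in(\frac65,\frac43)$.

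For (i), (iii) and (iv) I would introduce the auxiliary function $g(\lambda):=3K\lambda^{3\gamma-4}\int_{\mathbb{R}^3}\rho^\gamma\,\mathrm{d}x-\frac12\iint_{\mathbb{R}^3\times\mathbb{R}^3}\frac{\rho(x)\rho(y)}{|x-y|}\,\mathrm{d}x\,\mathrm{d}y$, so that the displayed formula for $Q(\rho_\lambda)$ reads $Q(\rho_\lambda)=\lambda\,g(\lambda)$ for $\lambda>0$. Under the standing hypothesis $\rho\in L^1(\mathbb{R}^3)\cap L^\gamma(\mathbb{R}^3)$ the double integral is finite by \eqref{2.8}, and since $\rho\geq0$, $\rho\not\equiv0$ and the Newtonian kernel is positive, both $\int\rho^\gamma$ and $\iint\frac{\rho(x)\rho(y)}{|x-y|}$ are strictly positive. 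Because $3\gamma-4<0$, $g$ is continuous and strictly decreasing on $(0,\infty)$ with $g(0^+)=+\infty$ and $g(+\infty)=-\frac12\iint\frac{\rho(x)\rho(y)}{|x-y|}<0$, so $g$ has a unique zero; solving $g(\lambda)=0$ recovers precisely $\lambda^*(\rho)$ of \eqref{3.56}, and $Q(\rho_{\lambda^*(\rho)})=0$ gives $\rho_{\lambda^*(\rho)}\in\mathcal{K}$, which is (i). Since $Q(\rho_\lambda)$ has the sign of $g(\lambda)$ for $\lambda>0$, one gets $Q(\rho_\lambda)<0\Leftrightarrow\lambda>\lambda^*(\rho)$ and $Q(\rho_\lambda)=0\Leftrightarrow\lambda=\lambda^*(\rho)$; specializing to $\lambda=1$ yields (iii) and (iv).

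For (ii) and (v) I would use the displayed identity $\frac{\mathrm{d}^2S_{\mu,\lambda}}{\mathrm{d}\lambda^2}=3(3\gamma-4)\lambda^{3\gamma-5}\int_{\mathbb{R}^3}K\rho^\gamma\,\mathrm{d}x$, which is strictly negative because $3\gamma-4<0$, $\lambda>0$ and $\int\rho^\gamma>0$; hence $\lambda\mapsto S_{\mu,\lambda}$ is strictly concave on $(0,\infty)$, which is (ii). By \eqref{3.58} its derivative $\frac1\lambda Q(\rho_\lambda)$ vanishes exactly at $\lambda=\lambda^*(\rho)$ from the sign analysis of $g$ above, so $\lambda^*(\rho)$ is the unique interior critical point of a strictly concave function, hence its unique global maximizer; therefore $S_\mu(\rho_\lambda)=S_{\mu,\lambda}<S_{\mu,\lambda^*(\rho)}=S_\mu(\rho_{\lambda^*(\rho)})$ for every $\lambda\neq\lambda^*(\rho)$, which is (v).

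Parts (vi) and (vii) are soft. For (vi), given $\phi\in L^{\gamma'}(\mathbb{R}^3)$ with $\frac1\gamma+\frac1{\gamma'}=1$, the substitution $y=\lambda x$ gives $\int_{\mathbb{R}^3}\rho_{n,\lambda}\,\phi\,\mathrm{d}x=\int_{\mathbb{R}^3}\rho_n(y)\,\phi(y/\lambda)\,\mathrm{d}y$, and since $\phi(\cdot/\lambda)\in L^{\gamma'}(\mathbb{R}^3)$ the hypothesis $\rho_n\rightharpoonup\bar\rho$ in $L^\gamma(\mathbb{R}^3)$ passes this to $\int\bar\rho(y)\phi(y/\lambda)\,\mathrm{d}y=\int\bar\rho_\lambda\,\phi\,\mathrm{d}x$, so $\rho_{n,\lambda}\rightharpoonup\bar\rho_\lambda$ in $L^\gamma(\mathbb{R}^3)$. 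For (vii), rearrangement commutes with multiplication by the positive constant $\lambda^3$, and for the dilation $D_\lambda x=\lambda x$ one has $|\{x:\rho(\lambda x)>t\}|=\lambda^{-3}|\{y:\rho(y)>t\}|$, so $\rho^\sharp\circ D_\lambda$ is radially symmetric, nonincreasing and equimeasurable with $\rho\circ D_\lambda$, whence $(\rho\circ D_\lambda)^\sharp=\rho^\sharp\circ D_\lambda$ by uniqueness of the symmetric-decreasing rearrangement; composing with the constant-multiple rule gives $(\rho_\lambda)^\sharp=(\rho^\sharp)_\lambda$. I do not expect a genuine obstacle: the whole argument is computational once the scaling identities are in hand, the only care being the consistent use of $3\gamma-4<0$ (so that $g$ is decreasing and $S_{\mu,\lambda}$ concave) and a one-time check, via \eqref{2.8} and positivity of the kernel, that $\int\rho^\gamma$, $\iint\frac{\rho(x)\rho(y)}{|x-y|}$ and $\int\rho$ are all finite and strictly positive under the standing hypotheses.
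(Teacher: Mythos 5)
Your argument is correct and is exactly the ``tedious calculation'' the paper has in mind: the paper records the scaling identities for $M(\rho_\lambda)$, $Q(\rho_\lambda)$, $S_{\mu,\lambda}$ and its two $\lambda$-derivatives just before the lemma and leaves the verification of (i)--(vii) to the reader, which is what you carry out via the sign analysis of $g(\lambda)$ (using $3\gamma-4<0$), strict concavity from the second-derivative formula, the change of variables for weak convergence, and equimeasurability for the rearrangement identity. No discrepancy with the paper's route.
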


\begin{theorem}\label{thm3.4}
$\tilde{\rho}$ attains  $ \min\limits_{\rho\in \mathcal{K}} S_\mu (\rho)$ if and only if $\tilde{\rho}$ is some translation of $\rho_\mu$.
\end{theorem}
\begin{proof}
\textbf{Step 1.}
 Let $\{\rho_n\}$ be a minimizing sequence of \eqref{3.61} such that
 \begin{align}
 Q(\rho_n)=0 \label{eq3.14v34}
\intertext{ and }
\lim\limits_{n\rightarrow\infty}S_\mu (\rho_n)=l_\mu .\notag
\end{align}
For the minimizing sequence $\{\rho_n\}$, we can define $w_n = \rho_n^\sharp$ which is the Schwartz symmetric rearrangement of $\rho_n$ and $u_n = w_{n, \lambda^*(w_n)}$. It follows from Lemma \ref{lem3.2} $(i)$ that $u_n \in \mathcal{K} $. Furthermore, it follows from Lemma \ref{lem3.2} (vii) that
$u_n = \left( \rho_{n, \lambda^*(w_n)} \right)^\sharp$. Therefore, by (i) and (v) of Lemma \ref{lem3.2}, we have
\begin{align*}
S_{\mu}(u_{n})\leq S_{\mu}\left(  \rho_{n,\lambda^{\ast}(w_{n})}\right)  \leq
S_{\mu}\left(  \rho_{n,\lambda^{\ast}(\rho_{n})}\right)  =S_{\mu}(\rho_{n}).
\end{align*}
Hence, we can assume $\{ \rho_n \}$ is a non-negative, spherically symmetric and non-increasing sequence.

From \eqref{3.62}, there holds
\begin{align}\label{3.65}
\|\rho_n\|_{L^\gamma(\mathbb{R}^3)}+ \|\rho_n\|_{L^1 \left(\mathbb{R}^3 \right)} \lesssim 1.
\end{align}
Denote $\triangle V_n=4\pi\rho_n$. By \eqref{2.8}, we have
\begin{align}\label{eq4.6v59}
\|\nabla V_n\|^2_{L^2 \left(\mathbb{R}^3 \right)} \lesssim
 \|\rho_n\|_{L^1}^{\frac{5\gamma-6}{3(\gamma-1)}} \|\rho_n\|_{L^\gamma}^{\frac{\gamma}{3(\gamma-1)}}.
\end{align}
From \eqref{eq3.14v34}, we have
\begin{align}\label{3.67}
3\int_{\mathbb{R}^3}K\rho_n^\gamma \, \mathrm{d}  x-\frac{1}{8\pi}\int_{\mathbb{R}^3}|\nabla V_n|^2\,  \mathrm{d} x=0.
\end{align}
It follows from \eqref{eq4.6v59} and \eqref{3.67} that
\begin{align}\label{eq3.17v44}
\|\rho_n\|_{L^\gamma \left(\mathbb{R}^3 \right)}^\gamma= \frac{1}{24\pi} \|\nabla V_n\|_{L^2(\mathbb{R}^3)}^2 \lesssim
\|\rho_n\|_{L^1 \left(\mathbb{R}^3 \right)}^\frac{5\gamma-6}{3(\gamma-1)}  \|\rho_n\|_{L^\gamma(\mathbb{R}^3)}^\frac{\gamma}{3(\gamma-1)}.
\end{align}
This together with \eqref{3.65} yields
\begin{align}\label{3.69}
K^\frac{3( \gamma - 1)}{\gamma ( 4- 3 \gamma) } \lesssim \|\rho_n\|_{L^\gamma \left(\mathbb{R}^3 \right)}.
\end{align}
Combining with \eqref{3.62}, it implies $l_\mu  >0$.

\textbf{Step 2.}
From \eqref{3.65}, there exists $\bar{\rho}$ such that $\rho_n\rightharpoonup \bar{\rho}$ weakly in $L^\gamma_{rad} \left(\mathbb{R}^3 \right)$.
By Lemma  \ref{lem2.3}, we get
\begin{align}\label{eq5.10v56}
\nabla V(\rho_n)\rightarrow \nabla V(\bar{\rho}) \text{ strongly in $L^2_{rad} \left(\mathbb{R}^3 \right)$}, \text{ as } n \to \infty.
\end{align}
Meanwhile, from \eqref{3.67} and \eqref{3.69}, we have
\begin{align}\label{eq4.11v59}
\|\nabla V(\rho_n)\|_{L^2  \left(\mathbb{R}^3 \right)} \gtrsim K^\frac{3( \gamma -1)}{ 4 - 3 \gamma}.
\end{align}
From Lemma \ref{lem2.1}, we have $\bar{\rho}\geq 0$, $\bar{\rho}\not\equiv 0$  and
\begin{align*}
\|\bar{\rho}\|_{L^\gamma  \left(\mathbb{R}^3 \right)}+ \|\bar{\rho}\|_{L^1  \left(\mathbb{R}^3 \right)} \lesssim 1.
\end{align*}
By \eqref{eq5.10v56} and \eqref{eq4.11v59}, we have
\begin{align*}
 \left\|\nabla V(\bar{\rho}) \right\|_{L^2 \left(\mathbb{R}^3 \right)}
\gtrsim  K^\frac{3( \gamma - 1)}{ 4 - 3 \gamma}.
\end{align*}
Thus, $\bar{\rho} \ne 0$.
By Lemma \ref{lem2.1} and the weakly lower semi-continuity of $\int_{\mathbb{R}^3}\rho^{\gamma}dx$, we get
\begin{align*}
\int_{\mathbb{R}^3}\bar{\rho} \,  \mathrm{d} x\leq \liminf_{n\rightarrow\infty}\int_{\mathbb{R}^3}\rho_n \,  \mathrm{d} x \text{ and }
\int_{\mathbb{R}^3}\bar{\rho}^\gamma \, \mathrm{d} x\leq \liminf_{n\rightarrow\infty}\int_{\mathbb{R}^3}\rho^\gamma_n \,  \mathrm{d} x.
\end{align*}
Above estimates together with \eqref{eq5.10v56} and \eqref{eq2.1v56} yield $S_\mu (\bar{\rho})\leq  \liminf\limits_{n\to \infty} S_\mu (\rho_n)$.

From Lemma \ref{lem3.2} (i), we have
\begin{align}\label{3.79}
\bar{\rho}^* :=\bar{\rho}_{\lambda^*(\bar{\rho})}\in \mathcal{K}
 \end{align}
and $\rho_{n,\lambda^*(\bar{\rho})}\rightharpoonup\bar{\rho}^*$ weakly in $L^\gamma \left(\mathbb{R}^3 \right)$, as $n\to \infty$.
By similar arguments as above, we have
\begin{align*}
S_\mu  \left(\bar{\rho}^* \right)\leq\liminf_{n\rightarrow\infty}S_\mu  \left(\rho_{n,\lambda^*(\bar{\rho})} \right).
\end{align*}
Moreover, from Lemma \ref{lem3.2} (iv) and (v), we have
\begin{align*}
\liminf_{n\rightarrow\infty}S_\mu  \left(\rho_{n,\lambda^* \left(\bar{\rho} \right)} \right)
\leq\liminf_{n\rightarrow\infty}S_\mu  \left(\rho_{n,\lambda^*(\rho_n)} \right)
=\liminf_{n\rightarrow\infty}S_\mu (\rho_{n})=l_\mu .
\end{align*}
Hence,
\begin{align}\label{3.76}
S_\mu  \left(\bar{\rho}^* \right)
=\min_{\rho\in \mathcal{K}} S_\mu (\rho).
\end{align}
This shows that $ \min\limits_{\rho\in \mathcal{K}} S_\mu (\rho)$ is obtained.

\textbf{Step 3.} Let $\bar{\rho}^{\ast}$ be a spherically symmetric minimizer \eqref{3.61}.
We will show $\bar{\rho}^{\ast}=\rho_{\mu}$.
First, we derive the Euler-Lagrange equation for $\bar{\rho}^*$. For $\epsilon>0$, define
\begin{align*}
\mathcal{S}_\epsilon:=\{x\in\mathbb{R}^3: \epsilon<\bar{\rho}^*<\frac{1}{\epsilon}\}.
\end{align*}
Choose a test function $w\in L^\infty(\mathbb{R}^3)$ which has compact support and is non-negative on $\mathbb{R}^3\setminus \mathcal{S}_\epsilon$. For $\tau\geq 0$ small enough, define
\begin{align*}
\rho^*_\tau=\bar{\rho}^*+\tau w.
\end{align*}
Then $\rho^*_\tau\geq 0$. By \eqref{3.56}, there exists a positive constant
\begin{align*}
\lambda_\tau
= \left( \frac{6\int_{\mathbb{R}^3}K(\rho^*_\tau)^\gamma  \, \mathrm{d} x}
{\iint_{\mathbb{R}^3\times\mathbb{R}^3}\frac{\rho^*_\tau(x)\rho^*_\tau(y)}{|x-y|} \,\mathrm{d} x \mathrm{d} y} \right)^\frac1{4-3\gamma}
\end{align*}
such that $\lambda^3_\tau\rho^*_\tau(\lambda_\tau x)\in\mathcal{K}$.

Since $\bar{\rho}^*$ is a minimizer of \eqref{3.61}, it follows that
\begin{align*}
0\leq& S_\mu(\lambda^3_\tau\rho^*_\tau(\lambda_\tau x))-S_\mu(\bar{\rho}^*)\\
=&\left(\int_{\mathbb{R}^3}(\frac{\gamma}{\gamma-1} K\left(\bar{\rho}^* \right)^{\gamma - 1}+ V \left(\bar{\rho}^* \right)- V_\mu(R_\mu))w\,\mathrm{d} x\right)\tau+o(\tau).
\end{align*}
Here, we use $Q(\bar{\rho}^*)=0$. Therefore, the  coefficient of $\tau$  must be non-negative. That is
\begin{align*}
\int_{\mathbb{R}^3}(\frac{\gamma}{\gamma-1} K\left(\bar{\rho}^* \right)^{\gamma - 1}+ V \left(\bar{\rho}^* \right)- V_\mu(R_\mu))w\,\mathrm{d} x\geq0.
\end{align*}
Hence, $\frac{\gamma}{\gamma-1} K\left(\bar{\rho}^* \right)^{\gamma - 1}+ V \left(\bar{\rho}^* \right)- V_\mu(R_\mu)\geq 0$ in $\mathcal{S}_\epsilon$ and $\frac{\gamma}{\gamma-1} K\left(\bar{\rho}^* \right)^{\gamma - 1}+ V \left(\bar{\rho}^* \right)- V_\mu(R_\mu)\geq 0,$ in $\mathbb{R}^3\setminus\mathcal{S}_\epsilon$ for all $\epsilon>0$ small enough. Taking $\epsilon\rightarrow0$, we get
\begin{align}\label{star}
\frac{\gamma}{\gamma-1} K\left(\bar{\rho}^* \right)^{\gamma - 1}+ V \left(\bar{\rho}^* \right)= V_\mu(R_\mu)=-\frac{M_1}{R_1}\mu^{\gamma-1},\ \text{in}\ \{x\in\mathbb{R}^3:\bar{\rho}^*>0\}.
\end{align}

Since the terms $\left(\bar{\rho}^* \right)^{\gamma - 1}$ and $V \left(\bar{\rho}^* \right)$ in \eqref{star} belong to $L^{\frac{\gamma}{\gamma-1}}_{rad}(\mathbb{R}^3)$, it obtains that the support of $\bar{\rho}^*$ is compact.
Note that $\bar{\rho}^*$ is spherically symmetric and non-increasing.
There exists $0<R_\mu<\infty$ such that $\bar{\rho}^*(|x|)=0$ when $x\in  B^c_{R_\mu} $.
From the regularity theory \cite{GT}, we have $ \left(\bar{\rho}^* \right)^{\gamma-1}\in C^2(B_{R_\mu} )$.
Let $F(s)= \left(\Phi' \right)^{-1}(s)$ for $s\in(0, \infty)$, where $\Phi$ is the enthalpy function.
We extend $F(s)$ to $(-\infty,0)$ by zero extension. This extended function is denoted by $F_+(s):\mathbb{R}\rightarrow[0, \infty)$. Therefore, the potential $V(\bar{\rho}^*)(r)=:V(r)$ satisfies
\begin{align*}
\Delta  V=V''+\frac{2}{r}V'=4\pi F_+\left(-\frac{M_1}{R_1}\mu^{\gamma-1}-V\right).
\end{align*}
Let $\bar{y}_\mu(r)=-\frac{M_1}{R_1}\mu^{\gamma-1}-V(r)= \frac\gamma{\gamma - 1} K (\bar{\rho}^*)^{\gamma - 1} =\Phi'(\bar{\rho}^*).$
Thus, we obtain the following ODE:
\begin{align*}
\begin{cases}
\bar{y}''_\mu+\frac{2}{r} \bar{y}'_\mu=-4\pi F_+( \bar{y}_\mu ),\\
\bar{y}_\mu (0)=\Phi'(\bar{\rho}^*(0)) ,\  \bar{y}'_\mu(0)=0.
\end{cases}
\end{align*}
This system is equivalent to
\begin{align*}
\bar{y}'_\mu(r)=-\frac{4\pi}{r^2}\int^r_0s^2F_+( \bar{y}_\mu (s)) \, \mathrm{d} s,\  \bar{y}_\mu (0)=\Phi'(\bar{\rho}^*(0)).
\end{align*}
From the classical ODE theory, this problem admits a unique solution $\bar{y}_\mu(r)$. Because $\bar{y}_\mu(R_\mu)= \frac\gamma{\gamma - 1} K (\bar{\rho}^*(R_\mu))^{\gamma - 1}=0$, there holds $-\frac{M_1}{R_1}\mu^{\gamma-1}=V(R_\mu)$. Note that  $\bar{\rho}^*$ is the solution of \eqref{1.11}  with  compact support $B_{R_\mu}$. 
We have $V(R_\mu)=-\frac{M_1}{R_1}(\bar{\rho}^*(0))^{\gamma-1}$ and consequently the center density $\bar{\rho}^*(0)=\mu$. Thus, $\bar{\rho}^{\ast}=\rho_{\mu}$.

\textbf{Step 4.}
Assume that $\tilde{\rho}$ is a minimizer of the variational problem
$\eqref{3.61}$. We will show that $\tilde{\rho}$ must be some translation of
$\rho_{\mu}$. Let $\tilde{\rho}^{\sharp}$ be the Schwartz symmetric
rearrangement of $\tilde{\rho}$. It follows from Lemma \ref{lem3.2} (i) that
$\tilde{\rho}_{\lambda^{\ast}(\tilde{\rho}^{\sharp})}^{\sharp}\in\mathcal{K}$.
Moreover, by Lemma \ref{lem3.2} (vii), $\tilde{\rho}_{\lambda^{\ast}%
(\tilde{\rho}^{\sharp})}^{\sharp}=\left(  \tilde{\rho}_{\lambda^{\ast}%
(\tilde{\rho}^{\sharp})}\right)  ^{\sharp}$. Therefore, by (i) and (v) of
Lemma \ref{lem3.2}, we have
\[
\min_{\rho\in\mathcal{K}%
}S_{\mu}(\rho)\leq S_{\mu}\left(  \tilde{\rho}_{\lambda^{\ast}(\tilde{\rho}^{\sharp})}^{\sharp
}\right)  \leq S_{\mu}\left(  \tilde{\rho}_{\lambda^{\ast}(\tilde{\rho
}^{\sharp})}\right)  \leq S_{\mu}\left(  \tilde{\rho}_{\lambda^{\ast}%
(\tilde{\rho})}\right)  =S_{\mu}(\tilde{\rho})=\min_{\rho\in\mathcal{K}%
}S_{\mu}(\rho).
\]
Therefore, $\lambda^{\ast}(\tilde{\rho}^{\sharp})=\lambda^{\ast}(\tilde{\rho
})=1$ and $S_{\mu}\left(  \tilde{\rho}^{\sharp}\right)  =S_{\mu}\left(
\tilde{\rho}\right)  $. Consequently, the equality in
(\ref{rearrangement-Gra-energy}) holds true for $\tilde{\rho}$. By Theorem 3.9
of \cite{EM}, $\tilde{\rho}$ must be spherically symmetric up to some
translation. Together with Step 3, this implies that $\tilde{\rho}$ is some
translation of $\rho_{\mu}$.
\end{proof}
\begin{remark}
Different from the case  $\gamma < \frac43$, the infimum  $\inf\limits_{ \rho \in \mathcal{K}} S_\mu ( \rho)$ is not bounded from below for the case $\frac43<\gamma<2$. Indeed, for $\lambda>0$ and fixed $\rho \in \mathcal{K}$, we have $\lambda^\frac2{2- \gamma} \rho( \lambda x)\in\mathcal{K}$ and
\begin{align*}
S_\mu \left(\lambda^\frac2{2- \gamma} \rho( \lambda x) \right) & = \frac{4 - 3 \gamma }{\gamma - 1} \int K  \left(\lambda^\frac2{2- \gamma} \rho( \lambda x) \right)^\gamma \,\mathrm{d}x - V_\mu (R_\mu) \int \lambda^\frac2{2- \gamma} \rho( \lambda x)\,\mathrm{d}x\\
& = \frac{4 - 3\gamma}{ \gamma - 1} \lambda^\frac{5 \gamma - 6}{2 - \gamma} \int K\rho^\gamma \,\mathrm{d}x - V_\mu (R_\mu) \lambda^\frac{3 \gamma - 4}{ 2- \gamma} \int \rho\,\mathrm{d}x \to - \infty, \text{ as } \lambda \to \infty.
\end{align*}
\end{remark}
\begin{lemma}\label{cor4.2}
Let $\mathcal{I}$ be the set defined in \eqref{eq1.14v34}. Then
\begin{align}\label{eq4.18v63}
\mathcal{I} = \bigcup\limits_{\mu > 0 } \mathcal{I}_\mu,
\end{align}
where
\begin{align*}
\mathcal{I}_\mu : =  \{(\rho,u):Q(\rho)>0, I_\mu ( \rho, u) <l_\mu\},
\end{align*}
and $I_\mu( \rho, u) : =  E(\rho,u)-V_\mu(R_\mu)M(\rho)$.
\end{lemma}

\begin{proof}
From Theorem \ref{thm3.4}, there holds
\begin{align*}
l_\mu = S_\mu (\rho_\mu),
\end{align*}
where $\rho_\mu$ is the spherically symmetric solution of the steady equations \eqref{1.11} 
with center density $\rho_\mu(0)=\mu$.

Let $\rho_\mu(r)=\mu{\rho}_1 \left(\mu^\frac{2-\gamma}{2}r \right)$.
 Then, ${\rho}_1 $ is the spherically symmetric solution of the steady equations \eqref{1.11} with center density ${\rho}_1 (0)=1$.
Hence,  we have
\begin{align}
l_\mu = \mu^\frac{5\gamma-6}{2}\Big[\int_{\mathbb{R}^3}\frac{K\gamma}{\gamma-1}\rho_1^\gamma(x)\, \mathrm{d} x
- \iint_{\mathbb{R}^3\times\mathbb{R}^3}\frac{\rho_1(x)\rho_1(y)}{|x-y|} \, \mathrm{d} x \mathrm{d} y
+\frac{M_1}{R_1}\int_{\mathbb{R}^3}\rho_1(x) \, \mathrm{d} x\Big]
=\mu^\frac{5\gamma-6}{2}l_1.\nonumber
\end{align}
For $( \rho, u) \in \bigcup\limits_{\mu > 0 } \mathcal{I}_\mu$, we have
\begin{align}\label{eq4.19v63}
E(\rho, u) < \max\limits_{\mu >0} f(\mu),
\end{align}
where
\begin{align*}
f(\mu) = V_\mu (R_\mu) M(\rho) + l_\mu
= - \frac{M_1}{R_1} \mu^{\gamma - 1} M(\rho) + \mu^\frac{5 \gamma - 6}2 l_1.
\end{align*}
Direct calculation yields the maximum point $\mu_0$ of $f(\mu)$:
\begin{align*}
\mu_0^\frac{4-3\gamma}2 = \frac{(5\gamma-6)l_1 R_1}{2(\gamma-1)M_1 M(\rho)}.
\end{align*}
We get
\begin{align}\label{eq4.20v63}
f(\mu_0) =   \frac{4-3\gamma}{5\gamma-6} \Big(\frac{5\gamma-6}{2(\gamma-1)}\Big)^\frac{2(\gamma-1)}{4-3\gamma}
 l_1^\frac{2(\gamma-1)}{4-3\gamma}  \Big(\frac{M_1}{R_1}\Big)^\frac{6-5\gamma}{4-3\gamma} M(\rho)^\frac{6-5\gamma}{4-3\gamma}.
\end{align}
Thus, by \eqref{eq4.19v63} and \eqref{eq4.20v63}, we have $(\rho, u) \in \mathcal{I}$ which yields
\begin{align*}
\mathcal{I} \supset  \bigcup\limits_{\mu > 0 } \mathcal{I}_\mu.
\end{align*}
By reversing the argument above, we can show
$\mathcal{I} \subset  \bigcup\limits_{\mu > 0 } \mathcal{I}_\mu.$ Therefore, we obtain \eqref{eq4.18v63}.
\end{proof}
\begin{remark}\label{re4.5}
The set  $\mathcal{I}_\mu$ is non-empty. For the non-rotating star $( \rho_\mu, 0)$ and $ 0 < \lambda< 1$, we have
\begin{align*}
Q \left(\lambda^3\rho_\mu(\lambda x) \right)>0 \ \text{and}\ S_\mu \left(\lambda^3\rho_\mu(\lambda x) \right)<l_\mu.
\end{align*}
Thus
\begin{align*}
 \left(\lambda^3\rho_\mu(\lambda x), 0 \right) \in \mathcal{I}_\mu.
\end{align*}
\end{remark}
\begin{lemma}\label{lem4.1}
The set $\mathcal{I}_\mu$ is invariant under the flow of the compressible Euler-Poisson equations \eqref{1.1}-\eqref{eq1.5v34}.
\end{lemma}
\begin{proof}
Assume the initial data $(\rho_0,u_0)\in \mathcal{I}_\mu $.
We see $I_\mu( \rho(t), u(t)) < l_\mu$ by the conservation of mass and energy. We also have $Q(\rho(t)) > 0$. Indeed, if this is not true, there is $T_0> 0$ such that $Q(\rho(T_0))=0$. But from the law of energy and mass conservation, we have
\begin{align}\label{4.54}
S_\mu(\rho(t))\leq I_\mu( \rho(t), u(t)) = I_\mu ( \rho_0, u_0) < l_\mu,
\end{align}
which contradicts the definition of $l_\mu$. Thus, we have $(\rho(t), u(t)) \in \mathcal{I}_\mu$.
\end{proof}

\begin{lemma}\label{lem4.2}
 For any $\rho$ satisfying $Q(\rho)>0$, we have $Q(\rho)\geq \frac{l_\mu-S_\mu(\rho)}{\lambda^*(\rho)-1}$,
 where $\lambda^*(\rho)$ is defined by \eqref{3.56}.
\end{lemma}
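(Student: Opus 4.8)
The plan is to exploit the concavity of the map $\lambda\mapsto S_{\mu,\lambda}=S_\mu(\rho_\lambda)$ established in Lemma \ref{lem3.2}(ii), together with the identity $\frac{\mathrm{d}}{\mathrm{d}\lambda}S_{\mu,\lambda}=\frac1\lambda Q(\rho_\lambda)$ from \eqref{3.58}. Since $Q(\rho)>0$, parts (iii) and (iv) of Lemma \ref{lem3.2} give $\lambda^*(\rho)>1$, so the denominator $\lambda^*(\rho)-1$ is strictly positive and the asserted inequality is equivalent to
\begin{align*}
S_\mu(\rho)+\bigl(\lambda^*(\rho)-1\bigr)Q(\rho)\ge l_\mu .
\end{align*}

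Next, set $g(\lambda):=S_{\mu,\lambda}$. By \eqref{3.58} we have $g'(1)=Q(\rho_1)=Q(\rho)$, and by Lemma \ref{lem3.2}(ii) the function $g$ is concave on $(0,\infty)$, hence lies below its tangent line at $\lambda=1$:
\begin{align*}
g(\lambda)\le g(1)+g'(1)(\lambda-1)=S_\mu(\rho)+Q(\rho)(\lambda-1),\qquad \lambda>0 .
\end{align*}
Evaluating this at $\lambda=\lambda^*(\rho)$ yields $S_\mu\bigl(\rho_{\lambda^*(\rho)}\bigr)\le S_\mu(\rho)+Q(\rho)\bigl(\lambda^*(\rho)-1\bigr)$. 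On the other hand, by Lemma \ref{lem3.2}(i) the rescaled density $\rho_{\lambda^*(\rho)}$ belongs to $\mathcal{K}$, so the definition \eqref{3.61} of $l_\mu$ as the infimum of $S_\mu$ over $\mathcal{K}$ gives $S_\mu\bigl(\rho_{\lambda^*(\rho)}\bigr)\ge l_\mu$. Combining the two estimates produces $l_\mu\le S_\mu(\rho)+Q(\rho)\bigl(\lambda^*(\rho)-1\bigr)$, and dividing by $\lambda^*(\rho)-1>0$ completes the argument.

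As for obstacles, there is no serious one: the statement is a soft consequence of the concavity structure packaged in Lemma \ref{lem3.2} once it is available. The only points needing a little care are checking the sign of $\lambda^*(\rho)-1$ so that the final division preserves the direction of the inequality (this is exactly where the hypothesis $Q(\rho)>0$ is used), and invoking the tangent-line characterization of a concave function in the correct direction, namely $g\le$ tangent at $\lambda=1$. One could bypass differentiability of $g$ entirely and argue with one-sided derivatives, but since $g$ is smooth in $\lambda$ on $(0,\infty)$ here, the tangent-line estimate above is the cleanest route.
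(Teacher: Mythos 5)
Your proof is correct and follows essentially the same route as the paper: both arguments use $\lambda^*(\rho)>1$ from Lemma \ref{lem3.2}(iii)--(iv), the tangent-line bound at $\lambda=1$ coming from the concavity in Lemma \ref{lem3.2}(ii) together with the derivative identity \eqref{3.58}, and the lower bound $l_\mu\le S_\mu(\rho_{\lambda^*(\rho)})$ since $\rho_{\lambda^*(\rho)}\in\mathcal{K}$. No further changes are needed.
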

\begin{proof}
For $Q(\rho)>0$, there exists $\lambda^*(\rho)>1$ such that $Q \left(\rho_{\lambda^*(\rho)} \right)=0$ from Lemma \ref{lem3.2} (iii) and (iv).
Therefore, from Lemma \ref{lem3.2} (ii) and \eqref{3.58}, we have
\begin{align*}
l_\mu  \leq S_{\mu,\lambda^*}< S_\mu(\rho)+ \left(\lambda^*-1 \right)\frac{\mathrm{d} S_{\mu,\lambda}}{\mathrm{d}\lambda}\Big|_{\lambda=1}
=S_{\mu}(\rho)+\left(  \lambda^{\ast}-1\right)  Q(\rho).
\end{align*}
It completes the proof.
\end{proof}

\subsection{Existence of spherically symmetric global  weak solutions when $\gamma\in \left(\frac65, \frac43 \right)$}\label{subse4.2v36}
In this subsection, we prove the global existence of a finite energy weak solution when the initial data $(\rho_0,u_0)$ belongs to the invariant set $\mathcal{I}$ by following the ideas in \cite{GLD}.

Firstly, from Lemmas \ref{cor4.2} and \ref{lem4.1}, if the initial data $(\rho_0,u_0)\in\mathcal{I}$, there exists $\mu>0$ such that
$(\rho_0,u_0)\in\mathcal{I}_\mu$.
Following the framework in Subsection \ref{subsubse3.2.1v60},
we will show the existence of global smooth approximate solutions to Navier-Stokes-Poisson equations.

Define
\begin{align}
M(\rho^{\epsilon,b}) = &\int_{a<|x|<b(t)}\rho^{\epsilon,b}  \, \mathrm{d} x, \notag
\intertext{ and }
E \left(\rho^{\epsilon,b},u^{\epsilon,b} \right)=&\frac{1}{2}\int_{a<|x|<b(t)}\rho^{\epsilon,b} \left|u^{\epsilon,b} \right|^2 \, \mathrm{d} x
+\int_{a<|x|<b(t)}\frac{K}{\gamma-1} \left(\rho^{\epsilon,b} \right)^\gamma \, \mathrm{d} x
- \frac1{8 \pi} \int_{\mathbb{R}^3} \left|\nabla V \left(\rho^{\epsilon, b}  \right) \right|^2 \,\mathrm{d}x \label{4.57}  \\
=&\frac{1}{2}\int_{a<|x|<b(t)}\rho^{\epsilon,b}  \left|u^{\epsilon,b} \right|^2 \, \mathrm{d} x
+\int_{a<|x|<b(t)} \frac{K(4-3\gamma)}{\gamma-1}
\left(\rho^{\epsilon,b} \right)^\gamma \, \mathrm{d} x + Q \left(\rho^{\epsilon,b} \right),  \notag
\intertext{ where }
Q \left(\rho^{\epsilon,b} \right)
=& 3K\int_{a<|x|<b(t)} \left(\rho^{\epsilon,b} \right)^\gamma  \, \mathrm{d} x
- \frac1{8 \pi} \int_{\mathbb{R}^3}  \left|\nabla V \left( \rho^{\epsilon, b} \right) \right|^2 \,\mathrm{d}x >0. \notag
\end{align}
In Lemma A.10 of \cite{GLD}, the approximate initial data $ \big(\rho_0^{\epsilon,b},u_0^{\epsilon,b} \big)$ of original initial data $(\rho_0,u_0)\in \mathcal{I}_\mu$ satisfies
\begin{align*}
&M \left(\rho_0^{\epsilon,b} \right)= M(\rho_0),\\
&Q \left(\rho^{\epsilon,b}_0 \right)
=3K\int_{a<|x|<b} \left(\rho_0^{\epsilon,b} \right)^\gamma \, \mathrm{d} x
-  \frac1{8 \pi} \int_{\mathbb{R}^3} \left|\nabla V \left( \rho_0^{\epsilon, b}  \right) \right|^2 \,\mathrm{d}x >0,\\
&E \left(\rho^{\epsilon,b}_0,u^{\epsilon,b}_0 \right)
-V_\mu(R_\mu) M \left(\rho_0^{\epsilon,b} \right)<l_\mu
\end{align*}
for $0<\epsilon\ll 1$ and $b\gg 1$.

Thus, $(\rho_0,u_0)\in\mathcal{I}_\mu$ implies that $ \left(\rho^{\epsilon,b}_0,u^{\epsilon,b}_0 \right)\in\mathcal{I}_\mu$.
We also have
\begin{lemma}
The set $\mathcal{I}_\mu$ is invariant under the flow of \eqref{4.18}-\eqref{4.23} when $\gamma \in \left( \frac65, \frac43\right)$.
\end{lemma}
\begin{proof}
Due to the energy identity of Lemma 3.1 in \cite{GLD}, there holds
\begin{align*}
& \int^{b(t)}_a\bigg(\frac{1}{2}\rho^{\epsilon,b}  \left|u^{\epsilon,b} \right|^2
+\frac{K \left(\rho^{\epsilon,b} \right)^\gamma}{\gamma-1}\bigg)r^2  \, \mathrm{d} r
-\frac{1}{2}\int_a^\infty\frac{1}{r^2}  \bigg( \int^r_a\rho^{\epsilon,b}(z) z^2 \, \mathrm{d} z \bigg)^2 \, \mathrm{d} r\\
&+\epsilon\int^t_0\int^{b(s)}_a \left(\rho^{\epsilon,b}  \left|u^{\epsilon,b}_r \right|^2
+2\rho^{\epsilon,b}\frac{ \left|u^{\epsilon,b} \right|^2}{r^2} \right)r^2 \, \mathrm{d} r \mathrm{d} s
+2\epsilon\int^t_0 \left(\rho^{\epsilon,b}  \left|u^{\epsilon,b} \right|^2 \right)(s,b(s))b(s) \, \mathrm{d} s\\
=&\int^b_a \left(\frac{1}{2}\rho_0^{\epsilon,b}
 \left|u^{\epsilon,b}_0 \right|^2+\frac{K }{\gamma-1} \left(\rho_0^{\epsilon,b} \right)^\gamma \right)r^2 \, \mathrm{d} r
-\frac{1}{2}\int_a^\infty\frac{1}{r^2}\Big(\int^r_a\rho_0^{\epsilon,b}(z)z^2 \, \mathrm{d} z\Big)^2 \, \mathrm{d} r,
\end{align*}
where $ \left(\rho^{\epsilon,b},u^{\epsilon,b} \right)$ is the smooth solution of problem \eqref{4.18}-\eqref{4.23} with initial data
$ \big(\rho_0^{\epsilon,b},u^{\epsilon,b}_0 \big)$. It shows that $E \left(\rho^{\epsilon,b},u^{\epsilon,b} \right)\leq E \big(\rho_0^{\epsilon,b},u^{\epsilon,b}_0 \big)$.
For $0<\epsilon\ll 1,b\gg 1$, by the law of mass conservation, we have
\begin{align*}
M \big(\rho^{\epsilon,b} \big) =4\pi \int_a^b\rho_0^{\epsilon,b}(r)r^2 \, \mathrm{d} r.
\end{align*}
We also have
\begin{align*}
S_\mu \left(\rho^{\epsilon,b} \right)\leq I_\mu \left(\rho_0^{\epsilon,b} \right) <l_\mu,
\end{align*}
where
\begin{align*}
S_\mu \left(\rho^{\epsilon,b} \right)
=\int_{a<|x|<b(t)}\frac{K}{\gamma-1} \left(\rho^{\epsilon,b} \right)^\gamma \, \mathrm{d} x
- \frac1{8\pi} \int_{\mathbb{R}^3} \left|\nabla V \left(\rho^{\epsilon,b} \right) \right|^2 \,\mathrm{d}x
-V_\mu(R_\mu)\int_{a<|x|<b(t)} \rho^{\epsilon,b} \, \mathrm{d} x.
\end{align*}
If there exists $T_0 > 0 $ such that $Q \left(\rho^{\epsilon,b}(T_0) \right)=0$,
it will contradict the definition of $l_\mu$.
\end{proof}

The approximate solutions are constructed by the following results.
\begin{lemma}
For every $\epsilon\in(0,\epsilon_0]$ and $b\gg 1$, assume that the initial data $ \left(\rho^{\epsilon,b}_0,u^{\epsilon,b}_0 \right)\in  \mathcal{I}_\mu$ are smooth and compatible with the boundary conditions \eqref{eq3.20v58} and \eqref{4.23}, $0<C^{-1}_{\epsilon,b}\leq \rho^{\epsilon,b}_0\leq C_{\epsilon,b}<\infty$. Then, there exists a global smooth solution $ \left(\rho^{\epsilon,b},u^{\epsilon,b} \right)$ of \eqref{4.18}-\eqref{4.23}.
\end{lemma}
\begin{proof}
Since $\rho^{\epsilon,b}=0$ for $r\in[0,a) \cup(b(t),\infty)$, from \eqref{4.57}, there holds
\begin{align*}
&\int_{a<|x|<b(t)} \left(\rho^{\epsilon,b} \right)^\gamma \, \mathrm{d} x < \frac{\gamma-1}{4-3\gamma}  E \big(\rho_0^{\epsilon,b},u_0^{\epsilon,b} \big),\\
&  \int_{\mathbb{R}^3}  \left| \nabla V \left( \rho^{\epsilon, b}  \right) \right|^2 \,\mathrm{d}x
\leq C\big(M \big(\rho_0^{\epsilon,b} \big),E \big(\rho_0^{\epsilon,b},u_0^{\epsilon,b} \big)\big).
\end{align*}
The global smooth solutions of the  problem \eqref{4.18}-\eqref{4.23} when $\gamma \in \left( \frac65, \frac43 \right)$ with the  initial data $ \left(\rho^{\epsilon,b}_0,u^{\epsilon,b}_0 \right)$ are established by similar arguments in  \cite{DL} and \cite{KL}. We omit the details.
\end{proof}

From the boundedness of the internal energy and gravitational potential energy of the approximate smooth solutions, by similar arguments as in \cite{GLD}, the global weak solution of the Euler-Poisson equations   in the sense of Definition \ref{def4.7} is obtained by taking $b\rightarrow\infty$ and the vanishing viscosity limit $\epsilon\rightarrow0$.

If $(\rho_0,\frac{\mathcal{M}_0}{\sqrt{\rho_0}})\in\mathcal{I}$, then $(\rho_0,-\frac{\mathcal{M}_0}{\sqrt{\rho_0}})\in\mathcal{I}$. Thus, we have the global  weak solution $ \left(\tilde{\rho}(t,x),\tilde{u}(t,x) \right)=\left(\tilde{\rho},\frac{\widetilde{\mathcal{M}}}{\sqrt{\tilde{\rho}}} \right)$ of \eqref{1.1}-\eqref{eq1.5v34} for $t>0$ with  initial  data $ \left(\rho_0,-\frac{\mathcal{M}_0}{\sqrt{\rho_0}} \right)$. Then, we have that $ (\rho(t,x),u(t,x))= \left(\tilde{\rho}(-t,x),-\tilde{u}(-t,x) \right)$ is the global  weak solution of \eqref{1.1}-\eqref{eq1.5v34} for $t<0$ with initial data $ \left(\rho_0,\frac{\mathcal{M}_0}{\sqrt{\rho_0}} \right)$.

In conclusion, the main result in this part is
\begin{theorem}
When $\gamma\in \left(\frac{6}{5},\frac{4}{3} \right)$,  assume that the initial spherically symmetric data $(\rho_0,\mathcal{M}_0,V_0)$ satisfies \eqref{4.6}, \eqref{4.7} and $ \left(\rho_0,\frac{\mathcal{M}_0}{\sqrt{\rho_0}} \right)\in \mathcal{I}$.
Then, there exists a global  weak solution $(\rho,\mathcal{M},V)$ with spherical symmetry for both $t>0$ and $t<0$.

\end{theorem}

\subsection{Expanding strong solutions}
Assume that the spherically symmetric global strong solution $(\rho,u)$ of  \eqref{1.5}-\eqref{1.8} with the initial data $(\rho_0, u_0) \in \mathcal{I}$ exists. There exists $\mu>0$ such that $(\rho_0,u_0)\in \mathcal{I}_\mu$.
 From \eqref{4.54} and Lemma \ref{lem4.2}, we get
\begin{align}\label{4.77}
Q(\rho(t))&\geq \frac{l_\mu-S_\mu(\rho(t))}{\lambda^*(\rho(t))-1}
\geq \frac{l_\mu-E(\rho_0,u_0)+V_\mu(R_\mu)M(\rho_0)}{\lambda^*(\rho(t))-1}.
\end{align}
If there exists $T_0\in(0, \infty]$ such that $\lim\limits_{t\rightarrow T_0}Q(\rho(t))=0$, we have $\lim\limits_{t\rightarrow T_0}\lambda^*(\rho(t))=1$ from Lemma \ref{lem3.2}. This is a contradiction with \eqref{4.77}.
Hence, there exists a constant $\Lambda>0$ such that
\begin{align*}
Q(\rho(t))\geq \Lambda.
\end{align*}
Then, from
\begin{align*}
E(\rho_0,u_0)=&E(\rho,u)\geq \int_{\mathbb{R}^3}\frac{K\rho^\gamma}{\gamma-1} \, \mathrm{d} x
-\frac{1}{2}\iint_{\mathbb{R}^3\times\mathbb{R}^3}\frac{\rho(x)\rho(y)}{|x-y|}\,  \mathrm{d} x \mathrm{d} y= Q(\rho)+\frac{4-3\gamma}{\gamma-1}\int_{\mathbb{R}^3}K\rho^\gamma\, \mathrm{d} x\\
\geq &\frac{4-3\gamma}{\gamma-1}\int_{\mathbb{R}^3}K\rho^\gamma\, \mathrm{d} x,
\end{align*}
we have  $\int_{\mathbb{R}^3}\rho^\gamma \, \mathrm{d} x<\frac{E(\rho_0,u_0)(\gamma-1)}{K(4-3\gamma)}.$
Note that
\begin{align*}
M(\rho_0)=\int_{\Omega(t)}\rho\, \mathrm{d}x\leq |\Omega(t)|^\frac{\gamma-1}{\gamma}\left(\int_{\Omega(t)}\rho^\gamma\, \mathrm{d}x\right)^\frac{1}{\gamma}.
\end{align*}
There holds
\begin{align*}
|\Omega(t)|\geq \left(\frac{K(4-3\gamma)}{E(\rho_0,u_0)(\gamma-1)}\right)^\frac{1}{\gamma-1}M(\rho_0)^\frac{\gamma}{\gamma-1}>0,
\end{align*}
which shows  the measure of the support is away from $0$.

Let
\begin{align}
H(t)&=\frac{1}{2}\int_{\mathbb{R}^3}\rho|x|^2  \,\mathrm{d} x. \notag
\intertext{ By direct calculation, we have  }
H''(t)& =\int_{\mathbb{R}^3}\rho|u|^2 \, \mathrm{d} x +3K\int_{\mathbb{R}^3}\rho^\gamma \, \mathrm{d} x
-\frac{1}{2}\iint_{\mathbb{R}^3\times\mathbb{R}^3}\frac{\rho(x)\rho(y)}{|x-y|}\,  \mathrm{d} x \mathrm{d} y  \label{eq5.19v51}\\
& =\int_{\mathbb{R}^3} \rho|u|^2 \, \mathrm{d} x + Q(\rho(t)). \notag
\end{align}
By \eqref{eq5.19v51}, we have
\begin{align}\label{4.52}
H''(t)=Q(\rho(t))+ \int_{\mathbb{R}^3} \rho|u|^2 \, \mathrm{d} x\geq \Lambda.
\end{align}
Integrating \eqref{4.52} from $0$ to $t$, there holds
\begin{align*}
 \frac{1}{2}\Lambda t^2+ \Big(\int_{\mathbb{R}^3} \rho_0u_0\cdot x \, \mathrm{d} x\Big)t + \frac{1}{2}\int_{\mathbb{R}^3}\rho_{0}|x|^2  \, \mathrm{d} x
\leq  H(t) = 2\pi\int_{r\leq R(t)}\rho r^4 \, \mathrm{d} r\leq \frac{M(\rho_0)R^2(t)}{2},
\end{align*}
which implies \eqref{1.15} for both $t\in [0, \infty)$ and $t\in(-\infty,0]$.

\begin{remark}
\label{re4.1} We note that the expansion results in Theorems 1.1 and 1.5 still hold true
even when shock waves form. For spherically symmetric
initial data $(\rho_{0},u_{0})$ satisyfing $M(\rho_{0})<M_{K}$ for
$\gamma=\frac{4}{3}$ or $(\rho_{0},u_{0})\in\mathcal{I}$ for $\gamma\in\left(
\frac{6}{5},\frac{4}{3}\right)  ,$ we assume that \eqref{1.5}-\eqref{1.8}
admits a piecewise smooth solution $(\rho,u)\ $containing a single shock
$|x|=s(t)$ with $s(t_{0})=s_{0}$ for some $t_{0}\geq0$. Denote
\[
(\rho,u)\left(  t\right)  =\left\{
\begin{array}
[c]{ll}%
(\rho_{-},u_{-}),\quad if & 0\leq|x|<s(t),\\
(\rho_{+},u_{+}),\quad if & s(t)<|x|<R\left(  t\right)  ,
\end{array}
\right.
\]
where $R\left(  t\right)  $ is the support radius of $\rho\left(  t\right)  $
and $\left(  \rho_{-},u_{-}\right)  \left(  \left\vert x\right\vert ,t\right)
$ and $\left(  \rho_{+},u_{+}\right)  \left(  \left\vert x\right\vert
,t\right)  $ are smooth in the regions $\{(t,x)|\ t\geq0,0\leq|x|<s(t)\}$ and
$\{(t,x)|t\geq0,s(t)<|x|<R\left(  t\right)  \}$, respectively. Since
$(\rho,u)$ is a global weak solution of \eqref{1.5}-\eqref{1.8} by Definition
\ref{def4.7}, it follows that Rankine-Hugoniot conditions at shock location
$|x|=s(t)$ are satisfied, that is,
\begin{align}
&  (P(\rho)+\rho u^{2})(t,s(t)+)-(P(\rho)+\rho u^{2})(t,s(t)-)=(\rho
u(t,s(t)+)-\rho u(t,s(t)-))\dot{s}(t),\label{RH1}\\
&  \rho u(t,s(t)+)-\rho u(t,s(t)-)=(\rho(t,s(t)+)-\rho(t,s(t)-))\dot{s}(t),\\
&  E(\rho,u)|_{(s(t)+,t)}=E(\rho,u)|_{(s(t)-,t)}.\label{RH3}%
\end{align}
By direct calculations and using the Rankine-Hogoniot conditions
\eqref{RH1}-\eqref{RH3}, the following virial identity still holds true
\begin{align*}
H^{\prime\prime}(t)= &  4\pi\int_{0}^{s(t)}\left(  \rho|u|^{2}+3K\rho^{\gamma
}\right)  r^{2}dr+4\pi\int_{s(t)}^{R(t)}\left(  \rho|u|^{2}+3K\rho^{\gamma
}\right)  r^{2}\,dr-\frac{1}{2}\iint_{\mathbb{R}^{3}\times\mathbb{R}^{3}}%
\frac{\rho(x)\rho(y)}{|x-y|}dxdy\\
= &  \int_{\mathbb{R}^{3}}\rho|u|^{2}\,dx+3K\int_{\mathbb{R}^{3}}\rho^{\gamma
}dx-\frac{1}{2}\iint_{\mathbb{R}^{3}\times\mathbb{R}^{3}}\frac{\rho(x)\rho
(y)}{|x-y|}\,dxdy.
\end{align*}
For $\gamma=\frac{4}{3}$, like \eqref{eq4.13v34}, we also have
\[
E(\rho,u)\geq\left(  3K-\frac{1}{2}C_{\min}(M(\rho_{0}))^{\frac{2}{3}}\right)
\int_{\mathbb{R}^{3}}\rho^{\frac{4}{3}}dx>0.
\]
If $\lim_{t\rightarrow+\infty}\int_{\mathbb{R}^{3}}\rho^{\frac{4}{3}}dx=0$,
from
\[
M(\rho_{0})=\int_{\Omega(t)}\rho\,dx\leq|\Omega(t)|^{\frac{1}{4}}\left(
\int_{\Omega(t)}\rho^{\frac{4}{3}}dx\right)  ^{\frac{3}{4}},
\]
it obtains that $\lim_{t\rightarrow+\infty}|\Omega(t)|=+\infty$. If
$\int_{\mathbb{R}^{3}}\rho^{\frac{4}{3}}\,dx\geq C>0$, we have
\[
H^{\prime\prime}(t)=E(\rho,u)+\frac{1}{2}\int_{\mathbb{R}^{3}}\rho
|u|^{2}dx\geq\left(  3K-\frac{1}{2}C_{\min}(M(\rho_{0}))^{\frac{2}{3}}\right)
C>0,
\]
which implies
\[
\frac{1}{2}M(\rho_{0})R^{2}(t)\geq\frac{1}{2}\left(  3K-\frac{1}{2}C_{\min
}(M(\rho_{0}))^{\frac{2}{3}}\right)  Ct^{2}+H^{\prime}(0)t+H(0),
\]
where $R(t)$ is the moving interface of fluids and vacuum states. The
arguments for $\gamma\in\left(  \frac{6}{5},\frac{4}{3}\right)  $ are similar.
Thus, the expansion results still hold in spite of the shock formation.
\end{remark}

\section{ Non-collapsing of white dwarf stars }\label{se5v28}
In this section, we  consider  the white dwarf stars. Recall that the Chandrasekhar limit mass $M_{ch}$ is the supremum of the mass of the non-rotating white dwarf stars. We will first show  $M_{ch}$ is equal to the  mass $M_K$ of the Lane-Emden stars when $\gamma=\frac{4}{3}$ for $K=2AB^{-\frac{4}{3}}$.
Then, we prove the white dwarf stars cannot collapse to a point when the mass is less than the Chandrasekhar limit mass.

For the equation of states $P_w$ of the white dwarf stars defined in \eqref{eq1.16v53}, there exist positive constants $c_1, c_2, \ldots, d_1,d_2, \ldots$ such that
\begin{align*}
P_w(\rho)=\left\{
            \begin{array}{ll}
              2AB^{-\frac{4}{3}} \rho^\frac{4}{3}-c_1\rho^\frac{2}{3}+\ldots, & \text{as}\ \rho\rightarrow \infty, \\
              \\
              d_1\rho^\frac{5}{3}-d_2\rho^\frac{7}{3}+O(\rho^3), & \text{as}\ \rho  \rightarrow 0.
            \end{array}
          \right.
\end{align*}
We can see $P_w(\rho)\approx K \rho^\frac{4}{3}$ for $\rho$ sufficiently large with $K=2AB^{-\frac{4}{3}}$.

As a direct consequence, the enthalpy function of the white dwarf stars
\begin{align}\label{eq5.1v69}
\Phi_w  ( \rho )=\frac{8A}{B}\left[\frac{1}{2}  \left( \frac{\rho}{B} \right)^\frac13  \sqrt{1+ \left( \frac{\rho}{B} \right)^\frac23}+\frac{1}{2}\ln \left(\left( \frac{\rho}{B} \right)^\frac13  +\sqrt{1+ \left( \frac{\rho}{B} \right)^\frac23} \right)\right]
\end{align}
satisfies $\Phi'_w  ( \rho )=\frac{8A}{B^\frac{4}{3}}\left(\sqrt{B^\frac{2}{3}+\rho^\frac{2}{3}}-B^\frac{1}{3}\right)$,
$\Phi''_w  ( \rho )=\frac{8A}{3B^\frac{4}{3}}\frac{\rho^{-\frac{1}{3}}}{\sqrt{B^\frac{2}{3}+\rho^\frac{2}{3}}}$ and
\begin{align}\label{eq5.2v69}
\Phi_w(\rho)=\left\{
            \begin{array}{ll}
        6AB^{-\frac{4}{3}} \rho^\frac{4}{3}+3c_1\rho^\frac{2}{3}+\ldots, & \text{as}\ \rho\rightarrow   \infty, \\
              \\
              \frac{3}{2}d_1\rho^\frac{5}{3}-\frac{3}{4}d_2\rho^\frac{7}{3}+O(\rho^3), & \text{as}\ \rho\rightarrow 0.
            \end{array}
          \right.
\end{align}
Let $F_w(s)= \left(\Phi'_w \right)^{-1}(s)=\left[\left(B^\frac{1}{3}+\frac{B^\frac{4}{3}}{8A}s\right)^2-B^\frac{2}{3}\right]^\frac{3}{2}$ for $s\in(0,s_{\max})$, where
\begin{align*}
s_{\max}=\int^\infty_0\frac{P'_w(\rho)}{\rho} \, \mathrm{d} \rho\in  (0, \infty].
\end{align*}
We extend $F_w(s)$ to $(-\infty,0)$ by zero extension.
The extended function is denoted by $F_w^+(s):\mathbb{R}\rightarrow[0,  \infty)$.
Therefore, the potential $V(\rho)(r)=:V(r)$ satisfies
\begin{align*}
\Delta  V=  V''+ \frac{2}{r}V'=4\pi F_w^+ (V(R)-V(r)).
\end{align*}
Let $y(r)=V(R)-V(r)=\Phi'_w(\rho).$ Then we obtain the following ODE:
\begin{align}\label{6.20}
\begin{cases}
y''+\frac{2}{r}y'=-4\pi F_w^+(y),\\
y(0)=\Phi'_w(\rho(0))=\Phi'_w(\mu)>0,\ y'(0)=0.
\end{cases}
\end{align}

\begin{lemma}\label{le5.1v46}
The Chandrasekhar limit mass $M_{ch}$ equals to the  mass $M_K$ of the Lane-Emden stars  for $\gamma=\frac{4}{3}$ and $K=2AB^{-\frac{4}{3}}$.
\end{lemma}
\begin{proof}
We claim that there exists a constant $\mu_0>0$ such that for any $\mu\in  (\mu_0, \infty)$,
there is a unique solution $y_\mu(r)$ of  \eqref{6.20} satisfying $y_\mu(R_\mu)=0$ for some $R_\mu>0$.
Then, the density of non-rotating  white dwarf stars with support radius $R_\mu$ is
\begin{align*}
\rho_\mu(|x|)=\left\{
                      \begin{array}{ll}
                        F_w(y_\mu(|x|)), & \text{if}~|x|<R_\mu, \\
                        0, & \text{if}~|x|\geq R_\mu.
                      \end{array}
                    \right.
\end{align*}
Indeed, it is equivalent to prove the statement for $\frac{1}{\alpha}=y_\mu(0)=\Phi_w '(\mu)$ sufficiently large.
Define
\begin{align}\label{eq4.3v54}
y_\mu(r)=\frac{1}{\alpha}\theta_\alpha \left(\frac{r}{\alpha} \right),\ s=\frac{r}{\alpha}.
\end{align}
Then, $\theta_\alpha$ satisfies
\begin{align*}
\begin{cases}
\theta''_\alpha+\frac{2}{s}\theta'_\alpha= -g_\alpha (\theta_\alpha),\\
\theta_\alpha(0)=1,\ \theta'_\alpha(0)=0,
\end{cases}
\end{align*}
where
\begin{align}\label{eq4.4v54}
g_\alpha (\theta_\alpha) =  4\pi\alpha^3  F_w^+ \left(\frac{\theta_\alpha}{\alpha} \right)
=
\left\{
\begin{array}{ll}
\left[\left(\alpha B^\frac{1}{3}+\frac{B^\frac{4}{3}}{8A}\theta_\alpha\right)^2-\alpha^2B^\frac{2}{3}\right]^\frac{3}{2},& \theta_\alpha\in[0,1] \\
0, & \theta_\alpha\in(-\infty,0).
\end{array}
\right.
\end{align}
Denote
\begin{align*}
g_0(\theta)=  C_\frac43  \left(\theta^3 \right)_+,
\end{align*}
where $C_\frac43 = \frac{4\pi}{ (4K)^3}$ and $K=2AB^{-\frac{4}{3}}$. One can see
\begin{align}\label{eq4.5v54}
g_\alpha\rightarrow g_0 \text{ in $C^1 [0,1] $ and in $C  (-\infty,1]  $}, \text{ as $\alpha\rightarrow 0^+$}.
\end{align}
Let $\theta_0$ satisfy the following classical Lane-Emden equation:
\begin{align*}
\begin{cases}
\theta''_0+\frac{2}{s}\theta'_0=  -C_\frac43  (\theta_0)_+^3,\\
\theta_0(0)=1,\theta'_0(0)=0.
\end{cases}
\end{align*}
Therefore, for any $R>0$, we have
\begin{align}\label{eq4.6v54}
\theta_\alpha\rightarrow \theta_0 \text{ in $C^1(0,R)$, as $\alpha\rightarrow0$}.
 \end{align}
Define $G(\alpha,s)=\theta_\alpha(s)$ for $\alpha>0$ and $G(0,s)=\theta_0(s)$. Let $R_0$ be the support radius of $\theta_0$.
Note that $G(0,R_0)=\theta_0(R_0)=0$ and $\frac{\partial G}{\partial s}(0, R_0)=\theta'_0(R_0)<0$.
Then, from the implicit function theorem, there exists $\alpha_0>0$ such that for $\alpha\in(0,\alpha_0)$, $G(\alpha,R_\alpha)=0$ where $R_\alpha$ is near $R_0$.
Hence, $R_\alpha$ is the support radius of $\theta_\alpha$.
Thus, for any $\mu>\mu_0 :=F_w \left(\frac{1}{\alpha_0} \right)$,
there exists a unique  solution $y_\mu(r)$ of \eqref{6.20} with the support radius $R_\mu=\alpha R_\alpha$.
This completes the proof of the claim.

Since $F^+_w$ is a $C^1$ function, the solution $  y_\mu(r)$ of \eqref{6.20} is $C^1$ in $r$ and $\mu$.
Therefore, the implicit function theorem implies that $R_\mu$ is $C^1$ in $\mu$.
It also obtains that the density $\rho_\mu(|x|)$ of the unique non-rotating white dwarf star is $C^1$ in $\mu$.
From \cite{Chan1939,ST}, we have for any $\mu\in(0, \infty )$,
there exists a unique non-rotating white dwarf star $(\varrho_\mu,0)$ with center density $\varrho_\mu(0)=\mu\in(0, \infty )$ and the support radius $R_\mu<\infty$.
From \cite{HU,SW}, the mass $M(\mu)$ of the non-rotating white dwarf stars satisfies $\frac{ \mathrm{d} M(\mu)}{ \mathrm{d} \mu}>0$.
Therefore, by the definition of $F_w^+$ and \eqref{eq4.3v54}-\eqref{eq4.6v54}, we obtain
\begin{align*}
M_{ch} =&\sup_{\mu>0}M(\mu)= \lim_{\mu\rightarrow \infty}M(\mu)
=\lim_{\mu\rightarrow \infty}\int_{\mathbb{R}^3}\varrho_\mu  \, \mathrm{d} x
=\lim_{\mu\rightarrow  \infty}4\pi\int_0^\infty F_w^+ (y_\mu) r^2 \, \mathrm{d} r\\
=&\lim_{\alpha\rightarrow0^+}4\pi\int_0^\infty r^2F_w^+ \left(\frac{1}{\alpha}\theta_\alpha \left(\frac{r}{\alpha} \right) \right)\, \mathrm{d} r
=\lim_{\alpha\rightarrow0^+} \int_0^{ \infty}s^2 g_\alpha \left(\theta_\alpha(s) \right) \, \mathrm{d} s
=  4\pi\int_0^{ \infty}s^2\left(\frac{\theta_0(s)}{ 4 K  }\right)^3 \, \mathrm{d} s,
\end{align*}
which is  exactly the  mass $M_K$ of the Lane-Emden stars for $\gamma=\frac{4}{3}$ and $K=2AB^{-\frac{4}{3}}$.
\end{proof}
We now show the white dwarf stars cannot collapse to a point when the mass is less than the Chandrasekhar limit mass.
By \eqref{eq5.1v69}, \eqref{eq5.2v69} and direct calculation, for $\rho'>0$ large enough  and $K=2AB^{-\frac{4}{3}}$, there exist two positive constants $C_1,C_{2,\rho'}$ such that
\begin{align*}
 \left|\int_{\mathbb{R}^3}  \left(\Phi_w( \rho)- 3 K  \rho^\frac43 \right)  \, \mathrm{d} x \right|
 \leq & \int_{ \left\{ x: \rho(x)  \ge  \rho' \right\} }    \left|\Phi_w( \rho)-  3 K   \rho^\frac43 \right|  \, \mathrm{d} x
 + \int_{  \left\{ x: \rho(x)  \le \rho' \right\} }   \left|\Phi_w( \rho)- 3 K  \rho^\frac43 \right|  \, \mathrm{d} x \\
\leq &   C_1 (\rho')^{- \frac23} \int_{\mathbb{R}^3} \rho^\frac43  \, \mathrm{d} x + C_{2,\rho'}\int_{\mathbb{R}^3} \rho \,  \mathrm{d} x.
\end{align*}
Let $(\rho,u)$ be a solution of the Euler-Poisson system with the equation of states $P_w$ and initial data $(\rho_0,u_0)$ of finite  energy and  mass. By \eqref{2.9} and Theorem \ref{thm3.1}, we have
\begin{align}\label{eq4.3v53}
 &   E(\rho_0,u_0)= E(\rho,u)\\
& =   \int_{\mathbb{R}^3} 3 K  \rho^\frac43  \, \mathrm{d} x
+ \int_{\mathbb{R}^3}  \left(\Phi_w(\rho) -  3 K \rho^\frac43 \right) \, \mathrm{d} x
-\frac{1}{2}\iint_{\mathbb{R}^3\times\mathbb{R}^3}\frac{\rho(x)\rho(y)}{|x-y|} \, \mathrm{d} x \mathrm{d} y \notag \\
& \geq  \left( 3 K  - \frac12 C_{\min} \Big(\int_{\mathbb{R}^3} \rho \, \mathrm{d} x\Big)^\frac23
- C_1 ( \rho')^{- \frac23} \right) \int_{\mathbb{R}^3} \rho^\frac43  \, \mathrm{d} x
- C_{2,\rho'} \int_{\mathbb{R}^3} \rho \, \mathrm{d} x. \notag
\end{align}
Note that $C_1$ is a positive constant independent of $\rho'$. Therefore, if $M(\rho_0) < M_{ch}$, we can choose $\rho'$ large enough such that
\begin{align}\label{eq4.4v53}
3 K  - \frac12 C_{\min} (M(\rho_0))^\frac23- C_1 ( \rho')^{- \frac23}>0.
\end{align}
Thus by \eqref{eq4.3v53} and \eqref{eq4.4v53}, we have
\begin{align}\label{eq5.3v41}
\int_{\mathbb{R}^3} \rho^\frac43 \,\mathrm{d} x \le  C (E(\rho_0,u_0), M(\rho_0)).
\end{align}
By the conservation of mass, H\"older's inequality and \eqref{eq5.3v41}, we obtain
\begin{align*}
M(\rho_0)=M(\rho)
\leq|\Omega(t)|^\frac{1}{4}\Big(\int_{\mathbb{R}^3}\rho^\frac{4}{3} \,  \mathrm{d} x\Big)^\frac{3}{4}
\leq \left|\Omega(t) \right|^\frac{1}{4} C(E(\rho_0,u_0), M(\rho_0)),
\end{align*}
where $|\Omega(t)|$ is the measure of the support set of  the white dwarf star. Thus, we have
\begin{align*}
 \left|\Omega(t) \right|\geq \frac{M(\rho_0)}{C(E(\rho_0,u_0), M(\rho_0))}.
\end{align*}
It means that the star cannot collapse to a point. The proof of  Theorem \ref{th5.2} is finished.

\begin{remark}\label{re5.2} The similar conclusion of non-collapse holds for more general equation of states, where $P(s)\in C^1(0,  \infty), P'>0$, $P(0)=0$ and $\lim\limits_{s\rightarrow \infty}s^{-\frac{1}{3}}P'(s)=K>0.$
\end{remark}

\section{High dimensional case of the Euler-Poisson equations}\label{se6v28}
In this section, we  consider the following $n$($n \ge 4$) dimensional extension of \eqref{1.1}-\eqref{1.4}:
\begin{align}\label{eq1.18v36}
&\partial_t \rho+\text{div} \left(\rho \textbf{u} \right)=0,                      \\
&\partial_t  \left(\rho \textbf{u} \right)+\text{div} \left(\rho \textbf{u}\otimes \textbf{u} \right)+ \nabla P (\rho)
=-\rho \nabla V,             \\
&\Delta V= n(n-2)B(n)\rho,      \\
&\lim\limits_{\left\vert x \right\vert \rightarrow\infty}V(t,x)  =0,\label{eq1.18v361}
\end{align}
where $(t,x)\in[0,+\infty)\times\mathbb{R}^n$, $\rho(t,x )\geq 0$  with the support  $\Omega(t)\subset \mathbb{R}^n$ and $\textbf{u}( t, x )\in\mathbb{R}^n$ is the velocity vector. The Poisson equation yields
\begin{align*}
V(x) =-\int_{\mathbb{R}^n}\frac{\rho(y)}{|x-y|^{n-2}}\,  \mathrm{d} y.
\end{align*}
The constant $B(n)$ is the volume of the unit ball in $\mathbb{R}^n$. The pressure is $P(\rho)=K\rho^\gamma (\frac{2n}{n+2}<\gamma\leq \frac{2n-2}{n})$.
The free boundary condition is
\begin{align}
&\rho=0\  \ \text{on}\ \Gamma(t):=\partial\Omega(t),\\
&\mathcal{V}(\Gamma(t))=\mathbf{u}\cdot\mathbf{n}, \label{6.6}
\end{align}
where $\Gamma(t)$ is moving interface of fluids and vacuum states, $\mathbf{n}$ is  exterior unit normal
vector to $\Gamma(t)$ and $\mathcal{V}(\Gamma(t))$ is the  normal velocity of $\Gamma(t)$.

The equations \eqref{eq1.18v36}-\eqref{eq1.18v361} have two conserved quantities, the energy and the mass.
One can see the energy is
\begin{align}
E(\rho,u)
&=\int_{\mathbb{R}^n}\frac{1}{2}\rho | u |^2 \mathrm{d} x+\int_{\mathbb{R}^n}\frac{K\rho^\gamma}{\gamma-1}  \, \mathrm{d} x
-\frac{1}{2}\iint_{\mathbb{R}^n\times\mathbb{R}^n}\frac{\rho(x)\rho(y)}{|x-y|^{n-2}} \, \mathrm{d} x \mathrm{d} y\nonumber\\
&=\int_{\mathbb{R}^n}\frac{1}{2}\rho | u|^2 \, \mathrm{d} x  +\frac{2n-2-n\gamma}{(\gamma-1)(n-2)}\int_{\mathbb{R}^n}\rho^\gamma \,  \mathrm{d} x  +\frac{Q(\rho)}{n-2} ,\label{6.5}
\end{align}
where
\begin{align*}
Q(\rho)=n\int_{\mathbb{R}^n} K\rho^\gamma  \, \mathrm{d}x
-\frac{n-2}{2}\iint_{\mathbb{R}^n\times\mathbb{R}^n}\frac{\rho(x)\rho(y)}{|x-y|^{n-2}} \, \mathrm{d}x\mathrm{d}y.
\end{align*}
The mass is
\begin{align*}
M(\rho)=\int_{\mathbb{R}^n}\rho\,  \mathrm{d}x.
\end{align*}
We can prove similar results as Theorems \ref{th1.1} and \ref{th1.2} for $\gamma = \frac{2n- 2}n$ and $\gamma \in \left( \frac{2n}{n+2}, \frac{2n- 2}n\right)$.
Since the arguments are similar to the proofs of Theorems \ref{th1.1} and \ref{th1.2}, we only give the presentations of some important quantities here. However, different from the results in 3-dimension, we can obtain a  blow-up result for higher dimensional cases.

We can study the similar variational problems for $\gamma = \frac{2(n-1)}n$ and $\gamma \in \left( \frac{2n}{n+2}, \frac{2(n-1)}n \right)$.
Similar to the 3D case, there exists a best constant $C(n,\gamma)>0$ such that
\begin{align*}
\|\nabla V\|^2_{L^2(\mathbb{R}^n)}\leq C(n,\gamma) \|\rho\|_{L^1(\mathbb{R}^n)}^\frac{(n+2)\gamma-2n}{n(\gamma-1)} \|\rho\|^\frac{(n-2)\gamma}{n(\gamma-1)}_{L^\gamma(\mathbb{R}^n)}.
\end{align*}
The virial quantity is
\begin{align*}
H(t)=\frac{1}{2}\int_{\mathbb{R}^n} \rho|x|^2 \, \mathrm{d}x.
\end{align*}
Its first and second derivatives are
\begin{align}
H'(t)&=\int_{\mathbb{R}^n}\rho u \cdot x \, \mathrm{d}x,\nonumber\\
H''(t)&=\int_{\mathbb{R}^n}\rho| u |^2 \, \mathrm{d}x+n\int_{\mathbb{R}^n} K\rho^\gamma \, \mathrm{d}x
-\frac{n-2}{2}\iint_{\mathbb{R}^n\times\mathbb{R}^n}\frac{\rho(x)\rho(y)}{|x-y|^{n-2}} \, \mathrm{d}x\mathrm{d}y\nonumber\\
&=\int_{\mathbb{R}^n}\rho| u |^2\, \mathrm{d}x +Q(\rho)\nonumber\\
&=(n-2)E(\rho, u)+\frac{4-n}{2}\int_{\mathbb{R}^n}\rho|u |^2 \, \mathrm{d}x+\frac{n\gamma-2n+2}{\gamma-1}\int_{\mathbb{R}^n} K\rho^\gamma  \, \mathrm{d}x.\label{H}
\end{align}
Denote
\begin{align*}
&S_\mu(\rho)=\int_{\mathbb{R}^n}\frac{K\rho^\gamma}{\gamma-1} \, \mathrm{d}x
-\frac{1}{2}\iint_{\mathbb{R}^n\times\mathbb{R}^n}\frac{\rho(x)\rho(y)}{|x-y|^{n-2}} \, \mathrm{d}x\mathrm{d}y-V_\mu(R_\mu)\int_{\mathbb{R}^n}\rho \,  \mathrm{d}x,
\end{align*}
where $V_\mu(R_\mu)$ is the value of $V(\rho_\mu)(r)$ at $R_\mu$ and $R_\mu$ is the radius of the support set of the steady solution $\rho_\mu$ to \eqref{eq1.18v36}-\eqref{eq1.18v361} with center density $\rho_\mu(0) = \mu$.
Let
\begin{align*}
\mathcal{I}_\mu = \left\{ ( \rho, u): Q( \rho)> 0, E( \rho, u) - V_\mu (\rho_\mu) M(\rho) < l_\mu \right\}, \\
\mathcal{B}_\mu = \left\{ ( \rho, u): Q( \rho) <  0, E( \rho, u) - V_\mu (\rho_\mu) M(\rho) < l_\mu \right\},
\end{align*}
where $l_\mu= \min \left\{S_\mu ( \rho):\rho \geq 0, \rho \not\equiv 0, Q(\rho)=0 \right\}$.
\begin{theorem}\label{thm6.1}
For $n\geq 4$, consider \eqref{eq1.18v36}-\eqref{eq1.18v361}
with spherically symmetric initial data $(\rho_0,u_0)$ which has compact support and finite energy.
\begin{itemize}
\item (1) If $\gamma= \frac{2n-2}{n}$, assume that the mass of the gaseous star $M(\rho_0)$ satisfies $M(\rho_0)< M_c(n) $ where $M_c(n) $ is the critical mass generated by the best constant $C(n)$ attained by the Lane-Emden stars.
    Then, there exists a global  weak solution with spherical symmetry.
    Moreover, if the spherically symmetric global strong solution of \eqref{eq1.18v36}-\eqref{6.6} exists when $M(\rho_0) < M_c(n)$, we have the support
radius $R(t)>0$  is away from $0$ with
\begin{align*}
R^2(t)\geq  \frac{E(\rho_0, u_0)}{M(\rho_0)}t^2+\frac{2\int_{\mathbb{R}^n}\rho_0 u_0\cdot x \, \mathrm{d}x}{M(\rho_0)}t
+\frac{\int_{\mathbb{R}^n}\rho_0 |x|^2 \, \mathrm{d}x}{M(\rho_0)}.
\end{align*}
(2) If $\gamma\in \left(\frac{2n}{n+2},\frac{2n-2}{n} \right)$,  assume that $(\rho_0, u_0)\in  \mathcal{I}_\mu$ for some $\mu>0$.
 Then, there exists a global  weak solution with spherical symmetry.
Moreover, assume that the spherically symmetric global strong solution of \eqref{eq1.18v36}-\eqref{6.6} exists when $(\rho_0, u_0) \in \mathcal{I}_\mu$.
Then, there exists a constant $\Lambda>0$ such that the support
radius $R(t)>0$  is away from $0$ with
\begin{align*}
R^2(t)\geq  \frac{\Lambda}{M(\rho_0)}t^2
+\frac{2\int_{\mathbb{R}^n}\rho_0 u_0\cdot x \, \mathrm{d}x}{M(\rho_0)}t
+\frac{\int_{\mathbb{R}^n}\rho_0 |x|^2 \, \mathrm{d}x}{M(\rho_0)}.
\end{align*}
\item For $\gamma=\frac{2n-2}{n}$, assume that the mass of gaseous star $M(\rho_0)$ satisfies $M(\rho_0)>M_c(n)$ or for $\gamma\in\left(\frac{2n}{n+2},\frac{2n-2}{n} \right)$, assume that $(\rho_0, u_0)\in   \mathcal{B}_\mu$ for some $\mu>0$.
If $E(\rho_0,u_0)<0$, the solution $(\rho,u)$ of \eqref{eq1.18v36}-\eqref{6.6} blows up in finite time in the sense that there exists $T_0 \in \mathbb{R}_+(T_1\in\mathbb{R}_-) $ such that $\int_{\mathbb{R}^n}  \left|\nabla\sqrt{\rho} \right|^2 \mathrm{d}x\rightarrow \infty$ as $t\rightarrow T_0^-(t\rightarrow T_1^+)$.
\end{itemize}
\end{theorem}

\begin{proof}
We only prove the result of blow-up. If the strong solution $(\rho,u)$ exists and lasts for $t\in[0, \infty)(t\in(-\infty,0])$, from \eqref{H}, we have
\begin{align*}
H''(t)\leq (n-2)E(\rho_0,u_0)<0.
\end{align*}
It is a contradiction with $H(t) \ge 0$ for $t\in[0, \infty)(t\in(-\infty,0])$.
Thus, there is a finite time $0<T_0< \infty(-\infty<T_1<0)$ such that $H(t) \to 0$, as $t \to T_0^-(t \to T_1^+)$.

From the uncertainty principle, we have
\begin{align*}
\frac{3}{2}M(\rho_0)= \frac32 M(\rho)
\leq \left(\int_{\mathbb{R}^n}\rho |x|^2\,  \mathrm{d}x\right)^\frac{1}{2}
\left(\int_{\mathbb{R}^n} \left|\nabla\sqrt{\rho} \right|^2 \, \mathrm{d}x\right)^\frac{1}{2}.
\end{align*}
Hence, $\int_{\mathbb{R}^n} \left|\nabla\sqrt{\rho} \right|^2  \, \mathrm{d}x \to \infty$ as $t \to T_0^-(t \to T_1^+)$.
\end{proof}

\begin{remark}
For $\gamma\in\left(\frac{2n}{n+2},\frac{2n-2}{n} \right)$, similar to the variational arguments in Subsection \ref{subse3.2v31}, we know that $\mathcal{B}_\mu$ is also invariant under the flow of \eqref{eq1.18v36}-\eqref{eq1.18v361}. Similar to Remark \ref{re1.6}, we have $(\lambda^n\rho_\mu(\lambda x),0)\in \mathcal{I}_\mu$ for $0<\lambda<1$ and $(\lambda^n\rho_\mu(\lambda x),0)\in \mathcal{B}_\mu$ for $\lambda>1$ where $(\rho_\mu,0)$ is the non-rotating star. From \eqref{6.5}, $Q(\rho)>0$ implies the energy $E(\rho,u)>0$. One can see that the invariant set $\mathcal{B}_\mu$ contains the data with negative energy. Indeed, let $\rho_\lambda=\lambda^n\rho_\mu(\lambda x)$ with $\lambda>0$. Choosing $\lambda>\left(\int_{\mathbb{R}^n}\frac{K\rho_\mu^\gamma}{\gamma-1}\mathrm{d}x\right)^{\frac{1}{2n-2-n\gamma}}\left(\frac{1}{2}\iint_{\mathbb{R}^n\times\mathbb{R}^n}\frac{\rho_\mu(x)\rho_\mu(y)}{|x-y|^{n-2}}\mathrm{d}x\mathrm{d}y\right)^{-\frac{1}{2n-2-n\gamma}}$, we have
\begin{align*}
E(\rho_\lambda,0)&=\lambda^{n(\gamma-1)}\int_{\mathbb{R}^n}\frac{K\rho_\mu^\gamma}{\gamma-1}\mathrm{d}x-\frac{\lambda^{n-2}}{2}\iint_{\mathbb{R}^n\times\mathbb{R}^n}\frac{\rho_\mu(x)\rho_\mu(y)}{|x-y|^{n-2}}\mathrm{d}x\mathrm{d}y<0,\\
Q(\rho_\lambda)&=\lambda^{n(\gamma-1)}\int_{\mathbb{R}^n}nK\rho_\mu^\gamma\mathrm{d}x-\frac{(n-2)\lambda^{n-2}}{2}\iint_{\mathbb{R}^n\times\mathbb{R}^n}\frac{\rho_\mu(x)\rho_\mu(y)}{|x-y|^{n-2}}\mathrm{d}x\mathrm{d}y\\
&<\lambda^{n(\gamma-1)}\left(n-\frac{n-2}{\gamma-1}\right)\int_{\mathbb{R}^n}K\rho_\mu^\gamma\mathrm{d}x<0.
\end{align*}
Different from the 3D case, the second term in \eqref{H} is non-positive for $n\geq 4$. We can prove that the solution will blow up in finite time when the initial data has negative energy. But it is unknown whether not a solution with arbitrary initial data in $\mathcal{B}_\mu$ will collapse in finite time. Meanwhile, the invariant set $\mathcal{B}_{\mu}$ can be similarly defined in
the 3-D case. It would be very interesting to know if collapsing solutions can be found for initial data in $\mathcal{B}_{\mu}$ for the 3D case.
\end{remark}

\noindent \textbf{Competing interests}
The authors declare that they have no competing interests. All data included in this study are available.

\noindent \textbf{Acknowledgments. }
 Ming Cheng's research was supported in part by the  Scientific and Technological Project of Jilin Provinces Education Department (Grant No. JJKH20231123KJ) and the NSF of China
(Grant No. 11971095 and No. 12371191). Xing Cheng has been partially supported by the NSF of Jiangsu Province (Grant No. BK20221497).
Zhiwu Lin's research was supported partly by the NSF grants DMS-1715201 and DMS-2007457.

\end{document}